\documentclass[a4paper,12pt,reqno]{amsart}

\usepackage{graphicx}
\usepackage{float}
\usepackage{amsmath}
\usepackage{amssymb}
\usepackage{comment}
\usepackage{harvard}


\topmargin-3mm \addtolength{\textheight}{115pt} \textwidth15.2cm
\textheight 22.6cm

\oddsidemargin0pt \evensidemargin0pt \hoffset0.15in
\setlength{\parindent}{6mm}

\numberwithin{equation}{section}

\theoremstyle{plain}                    
\newtheorem{thm}{Theorem}[section]
\newtheorem{lem}{Lemma}[section]
\newtheorem{cor}{Corollary}[section]
\newtheorem{prop}{Proposition}[section]

\theoremstyle{definition}
\newtheorem{defn}{Definition}[section]

\newtheorem{rem}{Remark}[section]               

\theoremstyle{remark}


\def\R{{\mathbb R}}

\def\N{{\mathbb N}}
\def\1{\mbox{I\hspace{-.6em}1}}

\newcommand{\be}{\begin{equation}}
\newcommand{\bd}{\begin{displaymath}}
\newcommand{\ed}{\end{displaymath}}
\newcommand{\bea}{\begin{eqnarray}}
\newcommand{\eea}{\end{eqnarray}}
\newcommand{\bean}{\begin{eqnarray*}}
\newcommand{\eean}{\end{eqnarray*}}

\baselineskip13pt

\providecommand{\abs}[1]{\lvert #1 \rvert}
\providecommand{\norm}[1]{\lVert #1 \rVert}
\providecommand{\eps}{\varepsilon}
\renewcommand{\phi}{\varphi}
\renewcommand{\theta}{\vartheta}

\providecommand{\floor}[1]{\lfloor #1 \rfloor}
\allowdisplaybreaks[1]

\begin{document}

\thispagestyle{empty}

\begin{center}
{\large \sc Asymptotic Equivalence for Nonparametric Regression with Non-Regular Errors}
 \end{center}

\vspace*{1cm}

\begin{tabular}{ccc}
Alexander Meister &\hspace{0.4cm} & Markus Rei{\ss} \\ \begin{small} Institut f\"ur Mathematik \end{small} && \begin{small} Institut f\"ur Mathematik \end{small} \\ \begin{small} Universit\"at Rostock \end{small} && \begin{small} Humboldt-Universit\"at zu Berlin \end{small} \\  \begin{small} Ulmenstra{\ss}e 69 \end{small} && \begin{small} Unter den Linden 6 \end{small} \\
\begin{small} 18051 Rostock, Germany \end{small} && \begin{small} 10099 Berlin, Germany \end{small} \\[1mm] \begin{small}
e-mail: alexander.meister@uni-rostock.de \end{small} && \begin{small} e-mail: mreiss@math.hu-berlin.de \end{small}
\end{tabular}

 \vspace*{2.5cm}

 \begin{center}
 {\bf Abstract}
 \end{center}
\begin{small}
Asymptotic equivalence in Le Cam's sense for nonparametric regression experiments is extended to the case of non-regular error densities, which have jump discontinuities at their endpoints. We prove asymptotic equivalence of such regression models and the observation of two independent Poisson point processes which contain the target curve as the support boundary of its intensity function. The intensity of the point processes is of order of the sample size $n$ and involves the jump sizes as well as the design density. The statistical model significantly differs from regression problems with Gaussian or regular errors, which are known to be asymptotically equivalent to Gaussian white noise models.
 \end{small}

\vspace*{1.5cm}


\vfill

\footnoterule \vspace{3mm}
\noindent
{\sl 2010 Mathematics Subject Classification}: 62B15; 62G08; 62M30. \\

\noindent {\sl Keywords:} Extreme value statistics; frontier estimation; Le Cam distance; Le Cam equivalence; Poisson point processes. \\


\newpage
\pagestyle{headings} \setcounter{page}{1}

\section{Introduction} \label{1}

The goal of transforming nonparametric regression models into asymptotically equivalent statistical experiments, which describe continuous observations of a stochastic process, has stimulated considerable research activity in mathematical statistics. The continuous design in these limiting models simplifies the asymptotic analysis and makes statistical procedures more transparent because in the regression case the discrete design points generate distracting approximation errors.
Most papers so far establish asymptotic equivalence of certain nonparametric regression models with nonparametric Gaussian shift experiments.
In that Gaussian white noise experiment, a process is observed which contains the target function in its drift and a blurring Wiener process which is scaled with a factor of order $n^{-1/2}$, where $n$ denotes the original sample size. The basic equivalence result for standard Gaussian regression with deterministic design has been established by \citeasnoun{BL96}. Afterwards, many important extensions have been achieved. The case of random design for univariate design has been treated by \citeasnoun{BCLZ02}. \citeasnoun{C07} considers the case of unknown error variance and design density; and \citeasnoun{R08} extends the results to the multivariate setting. Recently, the model with dependent regression errors has been investigated in \citeasnoun{C10}. The work by \citeasnoun {GN98} is the first to consider the important case of non-Gaussian errors which are, however, supposed to be included in an exponential family. Such classes of error distributions are also studied in \citeasnoun{BCZ10} where the regression error is supposed to be non-additive. General regular distributions for the additive error variables are covered in \citeasnoun{GN02} where only slightly more than standard Hellinger differentiablity is required for the error density.

On the other hand, when allowing for jump discontinuities of the error density, the situation changes completely. Standard examples include uniform or exponential error densities. These types of error distributions are non-regular and we know from parametric theory that better rates of convergence and non-Gaussian limit distributions can be expected. The faster convergence rates are attained only by specific estimators, e.g. employing extreme value statistics in their construction instead of local averaging statistics. The Nadaraja-Watson estimator and the local polynomial estimators are procedures of that latter type, which can be improved significantly under non-regular errors.  \citeasnoun{MW10} establish  improved minimax rates for regression functions which satisfy some H\"older condition. \citeasnoun{HK09} derive a rigorous theory for the optimal convergence rates for nonparametric regression under non-regular errors and smoothness constraints up to regularity one on the target regression function. Their nonparametric minimax rates in dimension one are of the form $n^{-s/(s+1)}$ for H\"older regularity $s$, which is faster than the usual $n^{-s/(2s+1)}$-rate for regular regression, but slower than $n^{-2s/(2s+1)}$, the squared regular rate in analogy with the parametric rates. At first sight, this is counter-intuitive, but may be explained by a Poisson instead of Gaussian limiting law. Many applications of non-regular regression models occur in the field of econometrics, see \citeasnoun{CH04} for an overview and a precise asymptotic investigation of the parametric likelihood ratio process. Irregular regression problems are also closely related to nonparametric boundary estimation in image reconstruction, see the monograph of \citeasnoun{KT93}. Considerable interest has also found the problem of frontier estimation, see \citeasnoun{GMPS99} and the references therein.


In \citeasnoun{JM94} weak asymptotic equivalence of the extreme order statistics of a one-dimensional localization problem with non-regular errors and a Poisson point process model is derived in a parametric setup. Also for the precise asymptotic analysis of regression experiments with non-regular errors the use of Poisson point processes and random measures turn out to be useful, see e.g. \citeasnoun{K01} for parametric linear models and \citeasnoun{CH04} for general parametric regression, yet a precise and nonparametric statement lacks. We intend to fill this gap by rigorously proving asymptotic equivalence of nonparametric regression experiments with non-regular errors with a Poisson point process (PPP) model. Therein the target parameter occurs as the boundary curve of the intensity function. Hence, the Gaussian structure of the process experiment is not kept; nor is the scaling factor $n^{-1/2}$ which will be changed into $n^{-1}$ in agreement with the parametric rate. For a comprehensive review on  PPP and their statistical inference we refer to \citeasnoun{K91} and \citeasnoun{K98}. They discuss  image reconstruction from laser radar as a practical application of support estimation of the intensity function of a PPP, which corresponds to identifying the target parameter in our PPP experiment. The asymptotic equivalence result therefore links interesting inference questions in both models which might prove useful in both directions.

For the basic concept of asymptotic equivalence of statistical experiments we refer to \citeasnoun{LC64} and  \citeasnoun{LCY00}. To grasp the impact let us just mention that asymptotic equivalence between two sequences of statistical models transfers asymptotical risk bounds for any inference problem from one model to the other, at least for bounded loss functions. Moreover, asymptotic equivalence remains valid for the sub-experiments obtained by restricting the parameter class so that we shall also cover smoother nonparametric or just parametric regression problems.

The paper is organized as follows. In Section \ref{2} we introduce our models, state our main result in Theorem \ref{T:1} and give a constructive description of the equivalence maps. In Section \ref{2.0} we construct pilot estimators of the target functions which will be employed to localize the model in Section \ref{3} and \ref{5}. The findings of Section \ref{4} yield asymptotic equivalence of the PPP experiment and the regression model when the target functions are changed into approximating step functions. In Section \ref{6} all the results are combined to complete the proof of Theorem \ref{T:1}. Section \ref{7} discusses limitations and extensions of the results and gives a geometric explanation of the unexpected nonparametric minimax rate for H\"older classes.

\section{Model and main result} \label{2}

In this section we specify the statistical experiments under consideration.
First we define the joint parameter space $\Theta$ of both the regression and
the PPP experiment, imposing standard smoothness constraints on the target
function.

\begin{defn}\label{defTheta}
For some constants $C_\Theta>0$ and $\alpha\in(0,1]$ the parameter set $\Theta$
consists of all functions $\theta:[0,1]\to\R$ which are twice continuously
differentiable on $[0,1]$ with $\|\theta\|_\infty\leq C_\Theta$ and
$\|\theta''\|_\infty \leq C_\Theta$ and where the second derivative satisfies
the H\"older condition
$$ \big|\theta''(x) - \theta''(y)\big|\, \leq \, C_\Theta |x-y|^\alpha\,, \quad \forall x,y\in [0,1]\,. $$
\end{defn}

In the regression model $\Theta$ represents the collection of all admitted
regression functions. This parameter space will remain unchanged for all
experiments considered here.

\begin{defn}\label{defA}
We define the statistical experiment ${\mathcal A}_n$ in which the data
$Y_{j,n}$, $j=1,\ldots,n$, with
\begin{equation} \label{eq:A}
Y_{j,n} \, = \, \theta(x_{j,n}) + \varepsilon_{j,n}
\end{equation}
are observed. The deterministic design points $x_{1,n},\ldots,x_{n,n}\in[0,1]$
are assumed to satisfy
\begin{equation} \label{eq:con_X_0}
x_{j,n} \, = \, F_D^{-1}\big((j-1)/(n-1)\big)\,,
\end{equation}
where the distribution function $F_D:[0,1]\to[0,1]$ possesses a Lipschitz
continuous Lebesgue density $f_D$ which is uniformly bounded away from zero.
The regression errors $\varepsilon_{j,n}$ are assumed to be i.i.d. with  error
density $f_\eps:[0,1]\to\R^+$, which is Lipschitz continuous and strictly
positive.
\end{defn}

The conditions on the design are adopted from \citeasnoun{BL96}. They imply
that
\begin{equation} \label{eq:con_X} d^{-1}/n \, \leq \, x_{j+1,n} - x_{j,n} \, \leq \, d/n\,, \end{equation}
\noindent for all $n\in \N$, $j=1,\ldots,n$ and a finite positive constant $d$.

The error model describes the class of densities which are supported on
$[-1,1]$, regular within $(-1,1)$ and which have jumps at their left and
right endpoints. Note that by constant extrapolation the density $f_\eps$ on
$[-1,1]$ can always be written as
$$ f_\varepsilon(x) \, = \, 1_{[-1,1]}(x)\cdot \varphi(x)\,, $$
\noindent with a strictly positive Lipschitz continuous function
$\varphi:\R\to\R$ satisfying for some constant $C_\eps>0$
\begin{align} \label{eq:error0}
 \sup_{t\not=s}\frac{|\varphi(t)-\varphi(s)|}{|t-s|} + \sup_t|\varphi(t)|\, \leq \, C_\eps.
\end{align}
Instead of constant extrapolation, $\varphi$ may alternatively be continued such that $\varphi\in L_1(\R)$ holds in addition.  

Hence, experiment ${\mathcal A}_n$ describes a non-regular nonparametric
regression model. We believe that the regularity condition on $f_\eps$ in the
interior $(-1,1)$ can be substantially relaxed, but at the cost of more
involved estimation techniques. We have restricted our
consideration to the specific interval $[-1,1]$ for convenience.

In the PPP model the target function $\theta$ occurs as upper and lower
boundary curves of the intensity functions of two independent Poisson point
processes $X_1$ and $X_2$.

\begin{defn}\label{defB}
For functions $\theta\in\Theta$, the design density $f_D$ and the noise density $f_\eps$ from above we define the experiment ${\mathcal B}_n$ in which we observe two independent
Poisson point processes $X_j$, $j=1,2$, on the rectangle $S = [0,1]\times
[-C_\Theta-1,C_\Theta+1]\subset\R^2$ with respective intensity functions
\begin{align}  \nonumber
\lambda_1(x,y) & \, = \, f_D(x)\cdot 1_{[-C_\Theta-1,\theta(x)]}(y)\cdot n f_\varepsilon(1), \\
\label{eq:B} \lambda_2(x,y) & \, = \, f_D(x)\cdot
1_{[\theta(x),C_\Theta+1]}(y)\cdot n f_\varepsilon(-1)\,,
\end{align}
\noindent for all $(x,y)\in S$.
\end{defn}

Each realisation $X_j$ represents a measure mapping from the Borel subsets of
$S$ to $\N \cup\{0\}$. Equivalently, $X_j(\cdot)/X_j(S)$ may be characterized
by a two-dimensional discrete probability distribution, see \citeasnoun{K91} or
\citeasnoun{K98} for more details on PPP. Thus, the underlying action space can
be taken as a Polish space (e.g. the separable Banach space $L_1(S)$) such that
asymptotic equivalence can be established by Markov kernels.

Figure \ref{Fig1} shows on the left the regression function $\theta(x)=\frac{3}{10}x\cos(10x)$ and corresponding $n=100$ equidistant observations of ${\mathcal A}_n$ corrupted by uniform noise on $[-1,1]$. A realisation of the equivalent PPP model ${\mathcal B}_n$ is shown on the right, with '+', '-' indicating point masses of $X_2$ and $X_1$, respectively.

\begin{figure} 
\centering
\includegraphics[width=7cm]{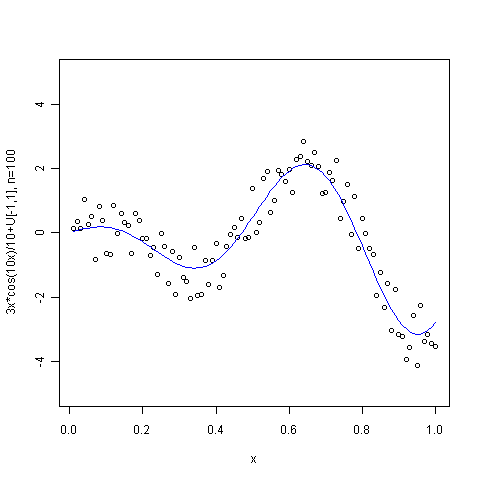}
\includegraphics[width=7cm]{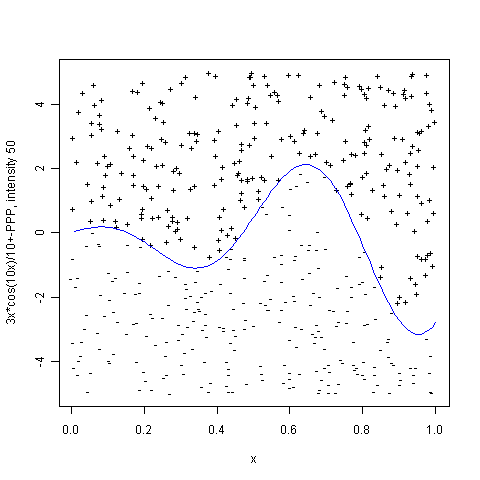}
\caption{Left: Regression model ${\mathcal A}_n$ with uniform $U[-1,1]$ errors.  Right: Equivalent Poisson point process model ${\mathcal B}_n$}
\label{Fig1}
\end{figure}

We may conceive $X_j$ as the random point measure
$\sum_{k=1}^{N_j}\delta_{(x_k^j,y_k^j)}$ where $N_j$ is drawn from a
Poisson-distribution with intensity $\norm{\lambda_j}_{L^1(S)}$ and the $(x_k^j,y_k^j)$
are drawn according to the bivariate density $\lambda_j/\norm{\lambda_j}_{L^1(S)}$. The vertical bounds $\pm(C_\Theta+1)$ for the
domain $S$ are non-informative for $\theta\in\Theta$, but the boundedness
avoids technicalities. The equivalent unbounded PPP can be described by infinite random point measures $\sum_{k=1}^{\infty}\delta_{(x_k^j,y_k^j)}$
where the $x_k^j$ are drawn according to the density $f_D$ and \[\textstyle y_k^1=\theta(x_k^1)-(nf_\eps(1))^{-1}\sum_{l=1}^kz_l^1, \quad y_k^2=\theta(x_k^2)+(nf_\eps(-1))^{-1}\sum_{l=1}^kz_l^2
\]
 holds with exponentially distributed $(z_k^j)$ of mean one (all independent). In this form, the PPP already appears in
\citeasnoun{K01}, yielding the limiting law for parametric estimators in the nonregular linear
model.

We present the main result of this work in the following theorem.

\begin{thm} \label{T:1}
The statistical experiments ${\mathcal A}_n$ and ${\mathcal B}_n$ are
asymptotically equivalent in Le Cam's sense as $n\to \infty$.
\end{thm}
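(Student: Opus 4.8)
The plan is to establish the asymptotic equivalence ${\mathcal A}_n \approx {\mathcal B}_n$ by a chain of intermediate experiments, localising the problem via a preliminary estimator and then reducing to a situation where the target curve is replaced by a fine step function. First I would introduce a pilot estimator $\hat\theta_n$ (constructed in Section~\ref{2.0}) that localises $\theta$ to a shrinking neighbourhood at a suitable rate; by the usual sample-splitting device (using a vanishing fraction of the data to estimate $\hat\theta_n$ and invoking the fact that Le Cam equivalence is stable under such splitting) I may assume that the unknown function lies in a deterministic ball of radius $\rho_n\to 0$ around a known centre. On the $\mathcal B_n$ side the analogous localisation is carried out on the intensities $\lambda_1,\lambda_2$. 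The point of localisation is that, after centring, the curvature term $\theta''$ contributes only a negligible quadratic correction over intervals of length of order $n^{-1}$, so that piecewise-constant approximation becomes admissible.

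Next I would replace $\theta$ by a step function $\bar\theta$ that is constant on each design cell $[x_{j,n},x_{j+1,n})$ — say $\bar\theta = \theta(x_{j,n})$ there — and control the Le Cam distance between the experiment with $\theta$ and the one with $\bar\theta$ in both models. In $\mathcal A_n$ this is immediate since each observation $Y_{j,n}$ only depends on $\theta$ through its value at $x_{j,n}$, so after the step-function replacement $\mathcal A_n$ is unchanged; the work is on the $\mathcal B_n$ side, where the intensity's support boundary $\{y=\theta(x)\}$ is moved to the staircase $\{y=\bar\theta(x)\}$, and one must bound the Hellinger (or total-variation) distance between the two Poisson laws. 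The standard bound $H^2(\mathrm{PPP}(\lambda),\mathrm{PPP}(\mu)) \leq \tfrac12\int(\sqrt{\lambda}-\sqrt{\mu})^2$ reduces this to controlling $n\int_0^1 |\theta(x)-\bar\theta(x)|\,dx$, which is of order $n \cdot n \cdot (n^{-1})^2 = O(1)$ unless we are more careful — and indeed this is the crux: one needs the oscillation of $\theta$ over a cell to be $O(n^{-1})$ after localisation, but $\theta' $ is only bounded, giving oscillation $O(n^{-1})$ only cell-by-cell, whose contributions sum to $O(1)$, \emph{not} $o(1)$. This forces a finer comparison: rather than the crude staircase, one should match $\bar\theta$ to $\theta$ so that the \emph{signed} discrepancies cancel to higher order (e.g.\ choosing $\bar\theta$ as the cell average, so that $\int|\theta-\bar\theta|$ over each cell is $O(n^{-2})$ by the mean value theorem applied to $\theta'$, giving a total of $n\cdot n\cdot n^{-2}=O(1)$ — still not enough), so in fact one must exploit that the localisation radius $\rho_n\to0$ multiplies this bound, or perform the step-function reduction at a coarser resolution $m_n\ll n$ and absorb the within-block fluctuation into a separate Gaussian/Poisson approximation step. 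This balancing of block size against approximation error is where the technical heart of the argument lies, and it is precisely what Section~\ref{4} is set up to handle.

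With $\theta$ replaced by a step function, both experiments decouple over blocks, and on each block $\mathcal A_n$ restricted to that block is a parametric location experiment with $m_n$ i.i.d.\ observations from the density $f_\eps(\cdot - c)$ for an unknown level $c$ in a $\rho_n$-neighbourhood, while $\mathcal B_n$ restricted to the corresponding strip is a Poisson point process whose intensity jumps across the line $y=c$. Here I would invoke the parametric non-regular localisation result of \citeasnoun{JM94} (or reprove its quantitative version): the extreme order statistics of the $m_n$ observations from each side carry all the information about $c$ asymptotically, and their joint law converges — with an explicit rate — to the two independent marginal Poisson point processes with intensities $m_n f_\eps(1)$ and $m_n f_\eps(-1)$ near the boundary. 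The design density $f_D$ enters because the number of design points, hence effective sample size, in a block of fixed width is proportional to $f_D$, which is exactly why $f_D(x)$ multiplies $n f_\eps(\pm1)$ in \eqref{eq:B}. Summing the per-block Le Cam deficiencies — using that Le Cam distance is subadditive over independent product experiments — and checking that $m_n$ and the number of blocks can be chosen so that both the approximation error from Section~\ref{4} and the per-block parametric-to-Poisson error are $o(1)$ in aggregate, completes the proof once the pieces of Sections~\ref{3}, \ref{4}, \ref{5} are assembled in Section~\ref{6}.

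I expect the main obstacle to be the two-sided balancing just described: the step-function approximation error in $\mathcal B_n$ improves as the block size shrinks, whereas the parametric-to-Poisson approximation within a block improves as the block size (hence the effective per-block sample size) grows, and one must verify there is a window of admissible block sizes — this is where the precise exponents in Definition~\ref{defTheta} ($\theta''$ bounded and $\alpha$-H\"older) and the $n^{-1}$ design spacing \eqref{eq:con_X} are used in an essential way, and where the localisation radius $\rho_n$ delivered by the pilot estimator of Section~\ref{2.0} must be good enough. A secondary technical point is handling the deterministic design: the discrete design points in $\mathcal A_n$ must be matched to the continuous design density $f_D$ in $\mathcal B_n$, which is done exactly as in \citeasnoun{BL96} using \eqref{eq:con_X}, contributing a further $o(1)$ term.
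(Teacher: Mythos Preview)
Your overall architecture---localise via a pilot estimator, reduce to a step function, then do a blockwise parametric extreme-value-to-Poisson approximation and reassemble---matches the paper's. But there is a genuine gap precisely at the point where you yourself flag trouble.

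You correctly compute that a piecewise \emph{constant} approximation of $\theta$ on blocks of width $1/m$ leaves an $L^1$-discrepancy contributing $O(n/m)$ on the PPP side, and on the regression side (Lemma~\ref{L:1} uses total variation, not Hellinger, so the same $L^1$-bound applies) an error of order $n\cdot m^{-1}$ as well; neither choice $m\asymp n$ nor cell-averaging rescues this. Your proposed remedies---``the localisation radius $\rho_n$ multiplies this bound'' or ``coarser $m_n$ plus absorbing fluctuation''---do not work as stated: localising $\theta$ in sup-norm does nothing for the \emph{oscillation} over a block, which is governed by $\theta'$, not $\theta$. What the paper actually does (and explicitly flags after the algorithmic description as the reason the Brown--Low constant-block approach fails here) is a \emph{piecewise linear} correction: one uses the pilot estimator $\hat\theta'$ of the derivative to subtract $\hat\theta'(\xi_k)(x-\xi_k)$ on each block before taking extremes, and to undo the same shear on the PPP side (Proposition~\ref{PropHI}). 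This upgrades the per-block approximation error from first order $O(m^{-1})$ to second order $O(m^{-2})+O(m^{-1})E|\hat\theta'-\theta'|$, so the aggregate becomes $O(nm^{-2})+O(nm^{-1})E|\hat\theta'-\theta'|$, and the window $n^{1/2}\ll m\ll n^{2/3}$ opens up (the upper constraint comes from Lemma~\ref{L:10}). Your sentence ``after centring, the curvature term $\theta''$ contributes only a negligible quadratic correction'' is the right intuition, but it is only true after the linear term has been removed, and your subsequent $O(1)$ computations show you have not actually built this into the scheme.

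Two smaller discrepancies. First, the paper does not use a vanishing fraction for the pilot: it splits the sample in half (odd/even indices), transforms one half into PPP data using a pilot built on the other, then \emph{swaps roles}---using the freshly produced PPP as the new ${\bf X}^*$ to build a pilot (via the PPP-side estimator of Proposition~\ref{Prop:0}) that transforms the remaining regression half; the two resulting half-intensity PPPs are then superposed (and the inverse map is thinning). Your one-shot ``vanishing fraction'' plan would lose half the effective sample size in ${\mathcal B}_n$. Second, Section~\ref{4} is not where the step-function balancing lives; it handles the blockwise $(\min,\max)\to$ exponential $\to$ PPP chain (Lemmas~\ref{L:10}--\ref{L:13}), while the linear-correction/step-function work is in Sections~\ref{3} and~\ref{5}.
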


This asymptotic equivalence is achieved constructively by consecutive
invertible (in law) and parameter-independent mappings of the data, which
generate new experiments where the observation laws are shown to be
asymptotically close (uniformly over $\theta$ in total variation norm). In
order to highlight the main ideas in the subsequent proof and to indicate how
to use our theoretical result in practice, let us give an algorithmic
description of these equivalence mappings leading from experiment ${\mathcal
A}_n$ to experiment ${\mathcal B}_n$ (in the version with unbounded domain).
\begin{enumerate}
\item Take the data $Y_{j,n}$, $j=1,\ldots,n$, from experiment ${\mathcal A}_n$.
\item Split the data and bin one part: consider the odd indices
$J_n:=\{1,3,\ldots,\linebreak 2\lceil n/2\rceil -1\}$ and intervals
$I_k=[k/m,(k+1)/m)$ with some appropriate $m$. Put ${\bf X}_1=(Y_{j+1,n})_{j\in J_n\backslash
\{n\}}$ and $\bar {\bf Z}=(\bar Z_j)_{j\in J_n}$ with
    $$\bar Z_j=Y_{j,n}-\hat\theta_1(\xi_{j})-\hat\theta_1'(\xi_{j})(x_j-\xi_j),\quad j\in J_n,$$
    where $\xi_j$ is the centre of that interval $I_k$ with $x_{j,n}\in
    I_k$ and where $\hat\theta_1$ is a (good) estimator of $\theta$ based
    on the data ${\bf X}_1$.
\item Consider the local extremes in $\bar {\bf Z}$, i.e. $s_k=\min(\bar Z_k)$, $S_k=\max(\bar Z_k)$, $k=0,\ldots,m-1$.
\item Use $\hat\theta$ on the data ${\bf X}_1$ again to transform $s_k''=s_k+\hat\theta_1(\xi_k)+1$,
$S_k''=S_k+\hat\theta_1(\xi_k)-1$.

\item Randomization to build PPP $X_l$, $X_u$: on each interval $I_k$ generate $(x_k^l,y_k^l)$
with $x_k^l$ having the density $f_k = f_D 1_{I_k} / \int_{I_k} f_D$ independent of everything else and
$y_k^l=S_k''-\hat\theta_1'(x_k^l)(\xi_k-x_k^l)$; define the PPP $X_l$ where independently on each $I_k$
we observe a point measure in $(x_k^l,y_k^l)$ plus independently (conditionally on $S_k''$, $\hat\theta_1'$) a PPP
with intensity
$$\textstyle \frac n2 f_\varepsilon(1)(m\int_{I_k}f_D){\bf 1}\{x\in
I_k,\,y\le S_k''-\hat\theta_1'(x)(\xi_k-x)\};
$$
analogously generate $x_k^u$ with the density $f_k$ independently,
$y_k^u=s_k''-\hat\theta_1'(x_k^u)(\xi_k-x_k^u)$ and use the intensity
$$\textstyle \frac n2 f_\varepsilon(-1)(m\int_{I_k}f_D){\bf 1}\{x\in I_k,\,y\ge
s_k''-\hat\theta_1'(x)(\xi_k-x)\}
$$
to build $X_u$ independently conditionally on $s_k''$, $\hat\theta_1'$.

\item Use a (good) estimator $\hat\theta_2$ based on the PPP data ${\bf X}_2=(X_l,X_u)$
and redo steps (2)-(5) to transform ${\bf X}_1$ via $\bar
Z_{j+1}=Y_{j+1,n}-\hat\theta_2(\xi_{j+1})-\hat\theta_2'(\xi_{j+1})(x_{j+1}-\xi_{j+1})$,
$j\in J_n$, to another couple $(X_l',X_u')$ of PPP; the final PPP are
obtained by $X_1=X_l+X_l'$, $X_2=X_u+X_u'$.
\end{enumerate}

In this algorithmic description we could do without substracting and adding the
pilot estimator itself (i.e., only use the derivative) in steps (2) and (4),
but in the proof this localization permits an easy sufficiency argument for the
local extremes. Put in a nutshell, the asymptotic equivalence is achieved by
considering block-wise extreme values in the regression experiment, in
conjunction with a pre- and post-processing procedure (localization step)
performing a linear correction on each block. The easier block-wise constant
approximation approach by \citeasnoun{BL96} does not work here since we need a
much higher approximation order.

Throughout we shall write const. for a generic positive constant which may
change its value from line to line and does not depend on the parameter
$\theta$ nor on the sample size $n$. Similarly, the Landau symbols $O$, $o$ and
the asymptotic order symbol $\asymp$ will denote uniform bounds with respect to
$\theta$ and $n$.

\section{Pilot estimators} \label{2.0}

In order to prove Theorem \ref{T:1} a localization strategy is required as in
\citeasnoun{N96} for the density estimation problem. To that end we construct
pilot estimators of the target function $\theta$ and its derivative in both,
experiments ${\mathcal A}_n$ and ${\mathcal B}_n$.

Let us fix the estimation point $x_0\in [0,1]$ and apply a local polynomial estimation approach. We introduce the neighbourhood $U_h=[x_0-h,x_0+h]$ for $x_0\in[h,1-h]$ and the one-sided analogue $U_h=[0,2h]$ for $x_0\in[0,h)$, $U_h=[1-2h,1]$ for $x_0\in(1-h,1]$. We introduce the set $\Pi:=\Pi_2(U_h)$ of quadratic polynomials on $U_h$.
Standard approximation theory (by a Taylor series argument) gives for $h\downarrow 0$
\[ \gamma_{h}:=\sup_{\theta\in\Theta}\min_{p\in\Pi}\max_{x\in U_h}\big(h^{-(2+\alpha)}\abs{\theta(x)-p(x)} + h^{-(1+\alpha)}\abs{\theta'(x)-p'(x)}\big)\le \text{const.}<\infty\,,
\]
\noindent where the constant does not depend on $h$.

\begin{defn}
We call $\hat\theta\in \Pi$ in experiment ${\mathcal A}_n$ locally admissible
at $x_0$ if
$$\max_{j: x_{j,n}\in U_h}
|Y_{j,n}-\hat\theta(x_{j,n})|\le 1+\gamma_hh^{2+\alpha}$$ holds. Similarly, in
experiment ${\mathcal B}_n$  we call $\hat\theta\in \Pi$ locally admissible at
$x_0$ if
$$X_1(\{x\in U_h,\,y>\hat\theta(x)+\gamma_hh^{2+\alpha}\})=0\text{ and }X_2(\{x\in
U_h,\,y<\hat\theta(x)-\gamma_hh^{2+\alpha}\})=0
$$ hold. Our estimator
$\hat{\theta}_{n,h}(x_0)$ is just any locally admissible $\hat\theta_{n,h}\in
\Pi$, evaluated at $x_0$ and selected as a measurable function of the data (by
the measurable selection theorem).
\end{defn}

Note that the by $\gamma_h$ enlarged band size guarantees that
$\hat\theta_{n,h}$ exists since the minimizer $\theta_h\in\Pi$ in the
definition of $\gamma_h$ is eligible. The following result gives the pointwise
risk bounds for the regression function and its derivative with orders
$O(n^{-s/(s+1)})$ and $O(n^{-(s-1)/(s+1)})$, respectively, where $s=2+\alpha$
denotes the regularity in a H\"older class. As an application of our asymptotic
equivalence we shall show in Section \ref{SecLB} below the optimality of these rates in a minimax
sense. The upper bound proof relies on entropy arguments and norm equivalences
for polynomials and could be easily extended to more general local polynomial
estimation and $L^p$-loss functions.

\begin{prop} \label{Prop:0}
Select the bandwidth $h$ such that $h \asymp n^{-1/(3+\alpha)}$. Then we have
in experiment ${\mathcal A}_n$ as well as in experiment ${\mathcal B}_n$
$$ \sup_{\theta\in\Theta} \sup_{x_0\in [0,1]} E_\theta\big(n^{2(2+\alpha)/(3+\alpha)}\big|\hat{\theta}_{n,h}(x_0) - \theta(x_0)\big|^2+n^{2(1+\alpha)/(3+\alpha)}\big|\hat{\theta}_{n,h}'(x_0) - \theta'(x_0)\big|^2\big) \, \leq \, \text{const.}$$
\end{prop}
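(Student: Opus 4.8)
The plan is to establish the rate for the estimator $\hat\theta_{n,h}(x_0)$ and its derivative by combining a deterministic approximation bound (the polynomial $\theta_h$ realizing $\gamma_h$) with a stochastic fluctuation bound coming from extreme-value behaviour. First I would reduce to the interior case $x_0\in[h,1-h]$, the boundary cases being analogous with one-sided neighbourhoods. By definition of local admissibility and since $\theta_h$ is eligible, both $\hat\theta_{n,h}$ and $\theta_h$ lie in $\Pi$ and satisfy the admissibility constraints; hence $q:=\hat\theta_{n,h}-\theta_h$ is a quadratic polynomial on $U_h$ whose size I must control. On the event (of overwhelming probability) that for the regression model the errors $\varepsilon_{j,n}$ restricted to indices with $x_{j,n}\in U_h$ come within $\delta_n$ of the endpoints $\pm1$ — and likewise that the PPP has points within $\delta_n$ of the boundary curve $\theta$ — the admissibility inequalities force $\sup_{x\in U_h}|q(x)|\lesssim \gamma_h h^{2+\alpha}+\delta_n+ (\text{Taylor remainder of }\theta-\theta_h)$. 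Then a norm-equivalence argument for polynomials of fixed degree on an interval of length $\asymp h$ transfers the sup bound on $q$ to a bound on $q'$, losing a factor $h^{-1}$: $\sup_{x\in U_h}|q'(x)|\lesssim h^{-1}\sup_{x\in U_h}|q(x)|$. Adding the approximation error $\gamma_h h^{2+\alpha}$ for $\theta-\theta_h$ and $\gamma_h h^{1+\alpha}$ for $\theta'-\theta_h'$ then yields, pointwise at $x_0$,
\[
|\hat\theta_{n,h}(x_0)-\theta(x_0)|\lesssim h^{2+\alpha}+\delta_n,\qquad
|\hat\theta_{n,h}'(x_0)-\theta'(x_0)|\lesssim h^{1+\alpha}+h^{-1}\delta_n.
\]

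The remaining task is to quantify $\delta_n$, the distance of the relevant extreme observation to the true boundary, and to integrate the tail. There are $\asymp nh$ design points in $U_h$; in the regression model the quantity $\min_j(1-\varepsilon_{j,n})$ over those indices is, up to the density value $\varphi(1)>0$, essentially an exponential minimum, so it is of order $(nh)^{-1}$ with exponential tails: $P(\delta_n> t)\le \exp(-c\,nh\,t)$ for $t$ up to a constant. The same holds in the PPP model because the vertical gap between the boundary curve and the nearest point of $X_1$ (resp.\ $X_2$) above a strip of horizontal width $\asymp h$ is governed by a Poisson count with mean $\asymp nh t$. Hence $E[\delta_n^2]\lesssim (nh)^{-2}$ and, more carefully, $E[(h^{-1}\delta_n)^2]\lesssim (nh^2)^{-2}$. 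Squaring the two displayed bounds, taking expectations, and using $(a+b)^2\le 2a^2+2b^2$, the contribution is $\lesssim h^{2(2+\alpha)}+(nh)^{-2}$ for $\hat\theta$ and $\lesssim h^{2(1+\alpha)}+(nh^2)^{-2}$ for $\hat\theta'$. With the stated choice $h\asymp n^{-1/(3+\alpha)}$ one has $nh\asymp n^{(2+\alpha)/(3+\alpha)}=h^{-(2+\alpha)}$ and $nh^2\asymp n^{(1+\alpha)/(3+\alpha)}=h^{-(1+\alpha)}$, so both terms balance and give exactly $n^{-2(2+\alpha)/(3+\alpha)}$ and $n^{-2(1+\alpha)/(3+\alpha)}$ respectively, as claimed; the suprema over $\theta\in\Theta$ and $x_0\in[0,1]$ are uniform since all constants ($C_\Theta$, the Lipschitz and lower bounds on $f_D$ and $\varphi$, the constant in $\gamma_h$, and the combinatorial constant $d$) are uniform.

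The main obstacle is the transition from the \emph{global} admissibility constraint on $U_h$ to a \emph{pointwise} bound at $x_0$ for a polynomial that is only controlled in the "good" direction (below $\theta+\gamma_h h^{2+\alpha}$ from above, etc.), because a priori admissibility alone bounds $q$ only from one side at the observed points. The resolution is that admissibility must hold simultaneously for $X_1$ and $X_2$ (resp.\ that the data envelope $|Y_{j,n}-\hat\theta|$ is bounded), which sandwiches $q$ from both sides: from the $X_1$/upper-error constraint $\hat\theta(x_{j,n})\ge \theta(x_{j,n})+\varepsilon_{j,n}-1-\gamma_h h^{2+\alpha}$ near the points where $\varepsilon_{j,n}\approx 1$, and symmetrically from below; since there are $\asymp nh$ points spread over $U_h$ with a gap at most $d/n$, and a quadratic polynomial is pinned down by its values at three well-separated points, these constraints localize $q$ uniformly on $U_h$. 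Making this three-point (or Markov-brothers-type) argument quantitative, with the error $\delta_n$ entering linearly, is the technical heart; everything else is the routine entropy/tail-integration bookkeeping alluded to in the text. A secondary subtlety is that in the PPP model the boundary is the curve $\theta$ itself rather than a finite sample of it, so one uses that a polynomial $q$ with $q\le \gamma_h h^{2+\alpha}$ on the support of the points forces, via the same norm equivalence, $\sup_{U_h}|q|$ small; this is cleaner than the regression case and poses no extra difficulty.
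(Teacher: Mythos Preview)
Your plan has a genuine gap at exactly the step you flag as the ``technical heart.'' The single random quantity $\delta_n$ --- the distance of the closest error in $U_h$ to an endpoint --- furnishes only \emph{one} one-sided constraint on $q=\hat\theta_{n,h}-\theta_h$ from each side, and that is not enough to bound a quadratic (or even linear) polynomial. For a concrete obstruction, take the linear perturbation $q(x)=M(x-x_0)/h$: admissibility of $\theta_h+q$ is violated at the index $j^*$ with $\eps_{j^*}$ closest to $-1$ only through the value $M(x_{j^*,n}-x_0)/h$, but $x_{j^*,n}$ is random and may happen to land near $x_0$, in which case no bound on $M$ follows from your event ``the errors come within $\delta_n$ of $\pm 1$.'' Your three-point heuristic does not repair this, because you have not arranged for extreme errors to occur at several \emph{well-separated} locations simultaneously; the tail bound $P(\delta_n>t)\le\exp(-c\,nh\,t)$ you quote refers to the single overall extreme, which pins down $q$ at one random design point only.

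The paper's route is different. Rather than bounding the random $q$ directly, it bounds, for each \emph{fixed} candidate $p\in\Pi$, the probability that $p$ is admissible, obtaining the $L^1$-type exponential inequality
\[
P\Big(\max_{j:x_{j,n}\in U_h}\big|\eps_{j,n}-(p-\theta)(x_{j,n})\big|\le 1+\Delta\Big)\;\le\;\exp\big(-c\,nh(\|p-\theta\|_{n,h,1}-\Delta)\big),
\]
which aggregates the constraints from \emph{all} design points in $U_h$, not just the extreme ones. A $\delta$-net over the three-dimensional set $\{p\in\Pi:\|p-\theta_h\|_{n,h,1}\ge c_1\max(1,c_0)(R-1)\gamma_h h^{2+\alpha}\}$ plus a union bound then yields $P(\exists\text{ admissible }p\text{ with }\|p-\theta_h\|_{n,h,1}\text{ large})=O(e^{-\text{const.}\cdot R})$, and the norm equivalences $\|p\|_{L^\infty(U_h)}\asymp\|p\|_{n,h,1}$ and $\|p'\|_{L^\infty(U_h)}\le c_0h^{-1}\|p\|_{L^\infty(U_h)}$ convert this to the required pointwise tail bounds. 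In ${\mathcal B}_n$ the exponential bound is exact via Poisson void probabilities and the argument is otherwise identical. Your approach could in principle be salvaged by partitioning $U_h$ into several subintervals, letting $\delta_n$ be the maximum of the per-subinterval extremes, and proving a sandwich lemma for quadratics from one-sided bounds in each subinterval; but that is not what you wrote, and it is not obviously simpler than the paper's chaining argument.
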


\noindent {\it Proof of Proposition \ref{Prop:0}:}
We shall need the following bounds in $\Pi=\Pi_2(U_h)$ from \citeasnoun{DL93}:  $\norm{p}_{L^\infty(U_h)}\le 8h^{-1}\norm{p}_{L^1(U_h)}$ (their Theorem IV.2.6); $\norm{p'}_{L^\infty(U_h)}\le c_0 h^{-1}\norm{p}_{L^\infty(U_h)}$ (their Thm. IV.2.7); their proof of Thm. IV.2.6 establishes $\abs{p(x)}\ge (1-4(x-x_M)/h)\norm{p}_\infty$ for $x_M:=\text{argmax}_{x\in U_h}\abs{p(x)}$  and $x_M\le x< x_M+h/4$, assuming without loss of generality that $x_M$ lies in the left half of $U_h$, such that uniformly over $x_0$
\[\norm{p}_{n,h,1}:=\frac{1}{nh}\sum_{x_{j,n}\in U_h}\abs{p(x_{j,n})}\geq\text{const.}\cdot \abs{p(x_M)}=\text{const.}\cdot \norm{p}_{L^\infty(U_h)}\]
is derived.

Let us start with considering the regression experiment ${\mathcal A}_n$. We
apply a standard chaining argument in the finite-dimensional space $\Pi$
together with an approximation argument.  From above we have
$\norm{p}_{L^\infty(U_h)}/\norm{p}_{n,h,1}\asymp 1$ as well as
$\norm{p}_{n,h,1}\ge c_1\abs{p(x_0)}$ with some $c_1>0$ uniformly in $p\in
\Pi$. Fix $R>2$. For every $\delta>0$ we can find elements $(p_l)_{l\ge
1}$ that form a $\delta$-net in $\Pi\cap \{\norm{p}_{n,h,1}\ge
c_1\max(1,c_0)(R-1)\gamma_hh^{2+\alpha}\}$ with respect to the
$L^\infty(U_h)$-norm satisfying $\norm{p_l}_{n,h,1}\asymp \delta l^{1/3}$ as
$l\to\infty$ ; for this note that, by the above norm equivalences, $\Pi\cap
\{\norm{p}_{n,h,1}\ge c_1\max(1,c_0)(R-1)\gamma_hh^{2+\alpha}\}$ with maximum
norm is isometric to $\R^3\cap\{\abs{x}\ge
c_1\max(1,c_0)(R-1)\gamma_hh^{2+\alpha}\}$ with the Euclidean metric uniformly
for $h\to 0$ and $nh\to\infty$ and use standard coverings of Euclidean balls, e.g. Lemma 2.5 in \citeasnoun{vdG06}. We obtain
\begin{align*}
&P_\theta\Big(\exists p\in \Pi: \max_{j:x_{j,n}\in U_h}\abs{Y_{j,n}-p(x_{j,n})}\le 1+\gamma_hh^{2+\alpha},\,\\
&\hspace{3cm} \max(h^{-(2+\alpha)}\abs{p(x_0)-\theta(x_0)},h^{-(1+\alpha)}\abs{p'(x_0)-\theta'(x_0)})\ge R\gamma_h\Big)\\
&=P_\theta\Big(\exists p\in \Pi: \max_{j:x_{j,n}\in U_h}\abs{\eps_{j,n}-(p(x_{j,n})-\theta(x_{j,n}))}\le 1+\gamma_hh^{2+\alpha},\\
&\hspace{3cm}\,\max(h^{-(2+\alpha)}\abs{p(x_0)-\theta(x_0)},h^{-(1+\alpha)}\abs{p'(x_0)-\theta'(x_0)})\ge R\gamma_h\Big)\\
&\le P_\theta\Big(\exists p\in \Pi: \max_{j:x_{j,n}\in U_h}\abs{\eps_{j,n}-(p(x_{j,n})-\theta_h(x_{j,n}))}\le 1+2\gamma_hh^{2+\alpha},\,\\
&\hspace{3cm} \norm{p-\theta_h}_{n,h,1}\ge \max(1,c_0)c_1(R-1)\gamma_hh^{2+\alpha}\Big)\\
&\le P_\theta\Big(\exists l\ge 1: \max_{j:x_{j,n}\in U_h}\abs{\eps_{j,n}-p_l(x_{j,n})}\le 1+2\gamma_hh^{2+\alpha}+\delta\Big)\\
&\le \sum_{l\ge 1} P_\theta\Big(\max_{j:x_{j,n}\in U_h}\abs{\eps_{j,n}-p_l(x_{j,n})}\le 1+2\gamma_hh^{2+\alpha}+\delta\Big).
\end{align*}

From $f_\varepsilon(-1)>0$, $f_\varepsilon(+1)>0$ and the Lipschitz continuity of $f_\varepsilon$ within $[-1,1]$ we infer that any $\eps_{j,n}$ satisfies
\[\min\big(P(\eps_{j,n}\ge 1-\kappa),P(\eps_{j,n}\le -1+\kappa)\big)\ge c\kappa\]
for some constant $c>0$ and all $\kappa\in (0,1)$.
We derive an exponential inequality for any $f:U_h\to\R$ and $\Delta>0$:
\begin{align*}
&P(\max_{j:x_{j,n}\in U_h}\abs{\eps_{j,n}-f(x_{j,n})}\le 1+\Delta)\\
& \le\prod_{j:x_{j,n}\in U_h} \Big(1-\min\Big(P(\eps_{j,n}>1+\Delta-\abs{f(x_{j,n})}),
\, P(\eps_{j,n}<-1-\Delta+\abs{f(x_i)})\Big)\Big) \\
&\le \exp\Big(\sum_{j:x_{j,n}\in U_h}\log(1-c(\abs{f(x_i)}-\Delta)_+)\Big)\\
&\le \exp\Big(-c\sum_{j:x_{j,n}\in U_h}(\abs{f(x_i)}-\Delta)_+\Big)\\
&\le \exp\big(-cnh(\norm{f}_{n,h,1}-\Delta)\big),
\end{align*}
using $\log(1+h)\le h$. We therefore choose $\delta=R\gamma_hh^{2+\alpha}$ and arrive at
\begin{align*}
&P_\theta\Big(\exists p\in \Pi: p\text{ is locally admissible},\,\\
&\qquad\max(h^{-(2+\alpha)}\abs{p(x_0)-\theta(x_0)},h^{-(1+\alpha)}\abs{p'(x_0)-\theta'(x_0)})\ge R\gamma_h\Big)\\
&\le \sum_{l\ge 1} \exp\Big(-\text{const.}\cdot nh(\delta+\gamma_hh^{2+\alpha}) l^{1/3}\Big)=O\Big(\exp\Big(-\text{const.}\cdot Rnh^{3+\alpha}\Big)\Big).
\end{align*}
We conclude, substituting $h\asymp n^{-1/(3+\alpha)}$, that uniformly over $R\ge 2$
\begin{align*}
P_\theta\Big(h^{-(2+\alpha)}\abs{\hat\theta_{n,h}(x_0)-\theta(x_0)}\ge R\gamma_h\Big)
&=O(\exp(-\text{const.}\cdot R)),\\
P_\theta\Big(h^{-(1+\alpha)}\abs{\hat\theta_{n,h}'(x_0)-\theta'(x_0)}\ge R\gamma_h\Big)
&=O(\exp(-\text{const.}\cdot R)).
\end{align*}
Integrating out these exponential tail bounds yields the desired moment bound
in experiment ${\mathcal A}_n$.

All the results obtained so far remain valid for the PPP experiment ${\mathcal
B}_n$ when the empirical norm $\norm{\cdot}_{n,h,1}$ is replaced by the
rescaled $L_1(U_h)$-norm $\norm{g}_{1,U_h}:=\frac1{h}\int_{U_h}\abs{g}$, the
admissibility conditions are exchanged and  the following (easier) exponential
inequality is used:
\begin{align*}
&P_\theta\Big(X_1(\{x\in U_h,\,y>\theta(x)+f(x)-\Delta\})=0,\, X_2(\{x\in U_h,\,y<\theta(x)+f(x)+\Delta\})=0\Big)\\
&=P_0\Big(X_1(\{x\in U_h,\,y>f(x)-\Delta\})=0\Big)P_0\Big(X_2(\{x\in U_h,\,y<f(x)+\Delta\})=0\Big)\\
& = \exp\Big(-nf_\eps(1)\int_{U_h} (f(x)-\Delta)_+f_D(x)\,dx\Big) \\
& \hspace{3cm}\cdot \exp\Big(-nf_\eps(-1)\int_{U_h} (-f(x)-\Delta)_+f_D(x)\,dx\Big)\\
&\le\exp\big(-c'nh(\norm{f}_{1,U_h}-\Delta)\big)
\end{align*}
with some constant $c'>0$.
\hfill $\square$\\

\section{Design adjustment for the regression experiment} \label{3}

We use a piecewise constant approximation strategy and introduce the intervals
\begin{equation}
I_{k,n} = [k/m,(k+1)/m), \quad k=0,\ldots,m-2, \text{ and }I_{m-1,n} = [(m-1)/m,1]
\end{equation}
for some integer $m$. For any design point $x_{j,n}\in I_{k,n}$ we introduce
the centre of the interval
\begin{equation}
\xi_{j,n}:=(k+1/2)/m \text{ for } x_{j,n}\in I_{k,n}.
\end{equation}
Now we apply a sample splitting scheme and write $J_n$ for the collection of odd
$j\in \{1,\ldots,n\}$. The experiment ${{\mathcal A}_n}$ is considered as the
totality of the two independent data sets ${\bf X}=(Y_{j+1,n})_{j\in J_n}$ and
${\bf Y}' = (Y_{j,n})_{j\in J_n}$.

Subsequently, we shall not touch upon $\bf X$ to establish asymptotic
equivalence, but just assume the existence of sufficiently good estimators
based on the data $\bf X$. Therefore, we forget about the specific definition
of ${\bf X}$ and write ${\bf X}^*$ instead.

\begin{defn}
Let ${\bf X}^*$ be an arbitrary observation in a Polish space, which is independent of
${\bf Y}'$. We generalize the experiment ${{\mathcal A}_n}$ to ${{\mathcal
A}_n}^*$, which consists of the data ${\bf Y}'$ and ${\bf X}^*$.
\end{defn}

The original experiment ${{\mathcal A}_n}$ is still included by putting ${\bf X}^*
= {\bf X}$. This enables us to repeatedly use the following results later also when
${\bf X}^*$ will denote a PPP observation.

In a first step we show asymptotic equivalence for the regression experiment
${{\mathcal A}_n}^*$ with the same experiment, but where for $j\in J_n$ the
regression function is observed at the interval centres $\xi_{j,n}$.

\begin{defn}\label{defC}
In experiment ${\mathcal C}_n$ we observe independently the vectors ${\bf X}^*$
as under experiment ${{\mathcal A}_n}^*$ and, independently, the vector ${\bf
Z}$ with the components
$$ Z_{j,n} \, = \, \theta(\xi_{j,n}) + \varepsilon_{j,n}\,, \qquad j \in J_n\,. $$
\end{defn}

\begin{lem} \label{L:1}
Choose $m\in\N$ such that $m^{-1}=o(n^{-1/2})$ holds and assume that an
estimator $\hat{\theta}'$ can be constructed based on the data set ${\bf X}^*$
with
$$ \sup_{\theta \in \Theta}\,  \sup_{x\in [0,1]} \, E_\theta|\hat{\theta}'(x) - \theta'(x)| \, = \, o(mn^{-1}). $$
\noindent Then the experiments ${{\mathcal A}_n}^*$ and ${{\mathcal C}_n}$ are
asymptotically equivalent.
\end{lem}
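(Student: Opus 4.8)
\noindent The plan is to exhibit a parameter-free Markov kernel — in fact a deterministic, in-law invertible transformation of the data — that maps $\mathcal{A}_n^*$ onto an experiment whose observation law is close, uniformly over $\Theta$, in total variation distance to that of $\mathcal{C}_n$, and then to run the same argument in the reverse direction. First I would leave ${\bf X}^*$ untouched, use the estimator $\hat\theta'$ constructed from ${\bf X}^*$, and replace each regression observation $Y_{j,n}$, $j\in J_n$, by
\[
\tilde Z_{j,n}\ :=\ Y_{j,n}-\hat\theta'(\xi_{j,n})\,(x_{j,n}-\xi_{j,n}).
\]
Inserting $Y_{j,n}=\theta(x_{j,n})+\eps_{j,n}$ and Taylor-expanding $\theta$ around $\xi_{j,n}$, which is admissible since $\theta\in C^2$ with $\|\theta''\|_\infty\le C_\Theta$ and $|x_{j,n}-\xi_{j,n}|\le 1/(2m)$, yields
\[
\tilde Z_{j,n}\ =\ \theta(\xi_{j,n})+\eps_{j,n}+\delta_{j,n},\qquad
|\delta_{j,n}|\ \le\ \big|\hat\theta'(\xi_{j,n})-\theta'(\xi_{j,n})\big|\,\tfrac1{2m}+\tfrac{C_\Theta}{8m^2}.
\]
Thus, conditionally on ${\bf X}^*$, the vector $(\tilde Z_{j,n})_{j\in J_n}$ is precisely the $\mathcal{C}_n$-vector $(Z_{j,n})_{j\in J_n}$ with each coordinate location-shifted by the (then deterministic) scalar $\delta_{j,n}$, while ${\bf X}^*$ carries the same marginal law in both experiments.

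The second step is to bound the total variation cost of a single shift. Since $f_\eps=1_{[-1,1]}\varphi$ with $\varphi:\R\to\R$ Lipschitz and bounded by $C_\eps$ (condition \eqref{eq:error0}), for every $\delta\in\R$
\[
\int_\R\big|f_\eps(x)-f_\eps(x-\delta)\big|\,dx\ =\ \int_\R\big|1_{[-1,1]}(x)\varphi(x)-1_{[-1,1]}(x-\delta)\varphi(x-\delta)\big|\,dx\ \le\ \text{const.}\cdot|\delta|,
\]
where on the overlap of the two supports the Lipschitz bound gives an $O(|\delta|)$ contribution over a length-$\le 2$ interval, and on the symmetric difference (of total length $2|\delta|$) the integrand is bounded by $C_\eps$; here the jumps of $f_\eps$ at $\pm1$ enter, but only linearly in $|\delta|$. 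Hence the laws of $\eps_{j,n}+\delta_{j,n}$ and of $\eps_{j,n}$ are at total variation distance $\le\text{const.}\cdot|\delta_{j,n}|$.

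Combining the coordinates via the tensorisation bound $\big\|\bigotimes_j P_j-\bigotimes_j Q_j\big\|_{\mathrm{TV}}\le\sum_j\|P_j-Q_j\|_{\mathrm{TV}}$ and then integrating the resulting conditional bounds against the law of ${\bf X}^*$ (which is independent of the $\eps_{j,n}$ and has the same marginal in both experiments), the observation law of the transformed experiment and that of $\mathcal{C}_n$ are at total variation distance at most $\text{const.}\sum_{j\in J_n}E_\theta|\delta_{j,n}|$; since $|J_n|\le n$,
\[
\sum_{j\in J_n}E_\theta|\delta_{j,n}|\ \le\ n\Big(\tfrac1{2m}\sup_{x\in[0,1]}E_\theta\big|\hat\theta'(x)-\theta'(x)\big|+\tfrac{C_\Theta}{8m^2}\Big)\ =\ o(1)
\]
uniformly over $\theta\in\Theta$, using the hypothesis $\sup_x E_\theta|\hat\theta'(x)-\theta'(x)|=o(mn^{-1})$ for the first term and $m^{-1}=o(n^{-1/2})$ for the second. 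The reverse direction uses the inverse map $Z_{j,n}\mapsto Z_{j,n}+\hat\theta'(\xi_{j,n})(x_{j,n}-\xi_{j,n})$ (again keeping ${\bf X}^*$) with an identical estimate, so the Le Cam distance between $\mathcal{A}_n^*$ and $\mathcal{C}_n$ tends to zero.

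The main obstacle — and the reason both hypotheses are imposed — is that merely relocating the design point $x_{j,n}$ to the interval centre $\xi_{j,n}$ does not suffice: the first-order term $\theta'(\xi_{j,n})(x_{j,n}-\xi_{j,n})$ is only of order $m^{-1}=o(n^{-1/2})$, so without a correction the per-coordinate total variation cost is of order $m^{-1}$ and the sum over the $\asymp n$ coordinates fails to vanish. One must therefore cancel this linear term using the pilot slope $\hat\theta'$, which is affordable exactly when $E_\theta|\hat\theta'-\theta'|=o(mn^{-1})$; what then remains is the quadratic Taylor remainder, of order $m^{-2}$ per coordinate, which sums to $o(1)$ precisely under $m^{-1}=o(n^{-1/2})$. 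A minor additional point to handle carefully is that the transformed $(\tilde Z_{j,n})_j$ is no longer independent of ${\bf X}^*$, which is why the comparison is performed at the level of the joint laws, the conditional total variation bounds being integrated against the law of ${\bf X}^*$.
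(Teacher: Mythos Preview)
Your proposal is correct and follows essentially the same route as the paper: the identical pilot-slope correction $\tilde Z_{j,n}=Y_{j,n}-\hat\theta'(\xi_{j,n})(x_{j,n}-\xi_{j,n})$, conditioning on ${\bf X}^*$, the per-coordinate $L_1$-bound on shifted error densities via the Lipschitz property of $\varphi$ plus the boundary jumps, and the telescopic (tensorisation) bound summed over $j\in J_n$. Your explicit treatment of the reverse direction is harmless but redundant, since the transformation is invertible and total variation already controls the Le Cam distance symmetrically.
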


\noindent {\it Proof of Lemma \ref{L:1}:} The observations ${\bf Y}'$ from the
experiment ${{\mathcal A}_n}^*$ are transformed into the data set $\tilde{{\bf
Y}}$ with the components
\begin{align*}
\tilde{Y}_{j,n} & \, = \, Y_{j,n} - \hat{\theta}'(\xi_{j,n}) (x_{j,n} - \xi_{j,n}) \\
& \, = \, \theta(x_{j,n}) - \theta'(\xi_{j,n}) (x_{j,n}-\xi_{j,n}) - [\hat{\theta}'(\xi_{j,n}) - \theta'(\xi_{j,n})] (x_{j,n}-\xi_{j,n}) + \varepsilon_{j,n}\,,
\end{align*}
\noindent for all $j\in J_n$. The data set ${\bf X}^*$ is not affected by this
transformation. As $\hat{\theta}'$ is based on the data ${\bf X}^*$, this
transformation is invertible so that the original data are uniquely
reconstructable from the transformed ones; and observing $({\bf X}^*,{\bf Y}')$
on the one hand and $({\bf X}^*,\tilde{{\bf Y}})$ on the other hand is
equivalent. Therefore, for any measurable functional $R$ with $\|R\|_\infty
\leq 1$ we observe that
\begin{align}
\big|E_\theta R({\bf X}^*,\tilde{{\bf Y}}) - E_\theta R({\bf X}^*,{\bf Z})\big| & \, \leq \, E_\theta \big|E_\theta \{R({\bf X}^*,\tilde{{\bf Y}}) | {\bf X}^*\} - E_\theta \{R({\bf X}^*,{\bf Z}) | {\bf X}^*\}\big| \label{eq:A1.0}\\
& \, \leq \, \sum_{j\in J_n} E_\theta \|f_{\tilde{Y}_{j,n}|X^*} - f_{Z_{j,n}|X^*}\|_1\,,\label{eq:CondTV}
\end{align}
\noindent where $\|\cdot\|_1$ denotes the $L_1(\R)$-norm; in general, $f_{Y|X}$
stands for the conditional density of $Y$ given $X$. The conditional
independence of the $\tilde{Y}_{j,n}$ and the $Z_{j,n}$ given ${\bf X}^*$ as well
as an elementary telescopic sum argument with respect to the
$L_1(\R)$-distance of the multivariate conditional densities of $\tilde{{\bf
Y}}$ and ${\bf Z}$ given ${\bf X}^*$ have been exploited. We obtain by the Lipschitz continuity of $\phi$
\begin{align} \label{eq:A1.1}
& \|f_{\tilde{Y}_{j,n}|X^*} - f_{Z_{j,n}|X^*}\|_1 \, \leq \, 2 \|\varphi\|_\infty \cdot |\Delta_{1,j,n}| \, + \, \int_{-1}^1 |\varphi(x + \Delta_{1,j,n}) - \varphi(x)| dx
\leq  4C_\eps \cdot |\Delta_{1,j,n}| \,,
\end{align}
\noindent where
$$ \Delta_{1,j,n} \, = \, \theta(x_{j,n}) - \theta(\xi_{j,n}) - \theta'(\xi_{j,n}) (x_{j,n}-\xi_{j,n}) - [\hat{\theta}'(\xi_{j,n}) - \theta'(\xi_{j,n})] (x_{j,n}-\xi_{j,n})\,. $$
We conclude that the total variation distance between $({\bf X}^*,\tilde{{\bf Y}})$ and $({\bf X}^*,{\bf Z})$ is bounded from above by
$$ \mbox{const.}\cdot \sum_{j\in J_n} E_\theta \big(|\Delta_{1,j,n}|\big)\,. $$
\noindent By the H\"older constraints imposed on the parameter class
$\Theta$ we derive that
$$ |\Delta_{1,j,n}| \, \leq \, \mbox{const.}\cdot\big( m^{-2} + |\hat{\theta}'(x_{j,n}) - \theta'(x_{j,n})| m^{-1} \big)\,. $$
\noindent Using $m^{-2}=o(n^{-1})$ and the convergence rate of
$\hat{\theta}'$, we conclude that the Le Cam distance
between the experiments ${{\mathcal A}_n}^*$ and ${{\mathcal C}_n}$ tends to zero uniformly in $\theta$, which gives the assertion of the lemma. \hfill $\square$ \\

Usually, the bound on the total variation of product measures which is used in
the proof is suboptimal, but here the order is optimal due to the singular parts in the measures. Note also that the data $Z_{j,n}$ may be viewed as
random responses drawn from a regression function which is locally constant on
the intervals $I_{k,n}$ with the values $\theta(\xi_{j,n})$ when $x_{j,n} \in
I_{k,n}$.

\section{Asymptotic equivalence for step functions} \label{4}

We revisit the experiment ${\mathcal C}_n$ from Definition \ref{defC}. The data
$Z_{j,n}$ may be transformed into
$$\tilde{Z}_{j,n} \, = \, Z_{j,n} - \hat{\theta}(\xi_{j,n}),$$
\noindent where $\hat{\theta}$ denotes a preliminary estimator of $\theta$
which is based on the data from ${\bf X}^*$ as contained in the experiment
${{\mathcal C}_n}$. Again this transformation is invertible so that the
experiment ${\mathcal C}_n$ is equivalent to the experiment ${{\mathcal C}_n}'$
under which one observes the data ${\bf X}^*$ and the vector $\tilde{{\bf
Z}}=(\tilde{Z}_{j,n})_{j\in J_n}$. The $\tilde{Z}_{j,n}$, $j\in J_n$, are
conditionally independent given ${\bf X}^*$ and have the conditional densities
\begin{equation}\label{eq:Delta0jn}
f_\varepsilon(x - \Delta_{0,j,n})=\varphi(x-\Delta_{0,j,n})
1_{[\Delta_{0,j,n}-1,\Delta_{0,j,n}+1]}(x) \text{ with }\Delta_{0,j,n}
=\theta(\xi_{j,n}) - \hat{\theta}(\xi_{j,n}).
\end{equation}

The next key step is to replace these densities by those with unshifted
$\varphi$ where local minima and maxima will turn out to be sufficient
statistics.

\begin{defn}\label{defD}
Let $W_{j,n}$, $j\in J_n$, conditionally on ${\bf X}^*$ be independent random
variables with respective densities
\begin{align*} f_{W,j}(x) & \, = \, \varphi(x) \Big(\int_{\Delta_{0,j,n}-1}^{\Delta_{0,j,n}+1} \varphi(t) dt\Big)^{-1} 1_{[\Delta_{0,j,n}-1,\Delta_{0,j,n}+1]}(x)\,,\qquad j\in J_n\,,
\end{align*}
where $\Delta_{0,j,n}$ is given in \eqref{eq:Delta0jn}. The experiment in which
${\bf X}^*$ and the $W_{j,n}$, $j\in J_n$, are observed for $\theta\in\Theta$
is denoted by ${\mathcal  D}_n$.
\end{defn}

\begin{lem} \label{L:2}
Suppose that an estimator $\hat{\theta}$ of $\theta$ can be constructed based
on the data set ${\bf X}^*$ such that
\begin{equation} \label{eq:est}
\sup_{\theta \in \Theta}\,  \sup_{x\in [0,1]} \, E_\theta|\hat{\theta}(x) -
\theta(x)|^2 \, = \, O(n^{-1-\delta})\,,
\end{equation}
\noindent for some
$\delta>0$. Then the experiments ${\mathcal C}_n$ and ${\mathcal D}_n$ are
asymptotically equivalent.
\end{lem}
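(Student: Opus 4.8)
The plan is to proceed exactly as in the proof of Lemma \ref{L:1}: since $\hat\theta$ is a function of ${\bf X}^*$ and the transformation $Z_{j,n}\mapsto\tilde Z_{j,n}=Z_{j,n}-\hat\theta(\xi_{j,n})$ is invertible given ${\bf X}^*$, the experiment ${\mathcal C}_n$ is equivalent to ${\mathcal C}_n'$ in which one observes $({\bf X}^*,\tilde{\bf Z})$. Bounding the Le Cam distance between ${\mathcal C}_n'$ and ${\mathcal D}_n$ from above, I would first condition on ${\bf X}^*$, use the conditional independence of the $\tilde Z_{j,n}$ (resp.\ $W_{j,n}$) together with the telescoping/subadditivity bound for the total variation distance of product measures, reducing everything to
$$ \sum_{j\in J_n} E_\theta\,\bigl\|f_{\tilde Z_{j,n}\mid X^*}-f_{W,j}\bigr\|_{1}. $$
Here $f_{\tilde Z_{j,n}\mid X^*}(x)=\varphi(x-\Delta_{0,j,n})1_{[\Delta_{0,j,n}-1,\Delta_{0,j,n}+1]}(x)$ and $f_{W,j}$ is the density in Definition \ref{defD}, with $\Delta_{0,j,n}=\theta(\xi_{j,n})-\hat\theta(\xi_{j,n})$.

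The key point, and the only real difference from Lemma \ref{L:1}, is that $f_{W,j}$ uses the \emph{unshifted} cutoff $1_{[-1,1]}$ while $f_{\tilde Z_{j,n}\mid X^*}$ uses the \emph{shifted} cutoff $1_{[\Delta_{0,j,n}-1,\Delta_{0,j,n}+1]}$; the two densities are supported on intervals differing by a translation of size $|\Delta_{0,j,n}|$. Splitting $\|f_{\tilde Z_{j,n}\mid X^*}-f_{W,j}\|_1$ into the contribution from the symmetric difference of the two supports (each a set of Lebesgue measure $|\Delta_{0,j,n}|$, on which $\varphi$ is bounded by $C_\eps$) and the contribution on the common support, and noting that the normalising constant $\bigl(\int_{\Delta_{0,j,n}-1}^{\Delta_{0,j,n}+1}\varphi\bigr)^{-1}$ differs from $1$ by at most $\mathrm{const.}\cdot|\Delta_{0,j,n}|$ (since $\int_{-1}^{1}f_\eps=1$ and $\varphi$ is Lipschitz and bounded near $\pm1$), one gets
$$ \bigl\|f_{\tilde Z_{j,n}\mid X^*}-f_{W,j}\bigr\|_{1}\ \le\ \mathrm{const.}\cdot|\Delta_{0,j,n}|. $$
Hence the Le Cam distance is bounded by $\mathrm{const.}\cdot\sum_{j\in J_n}E_\theta|\theta(\xi_{j,n})-\hat\theta(\xi_{j,n})|\le \mathrm{const.}\cdot\sum_{j\in J_n}(E_\theta|\hat\theta(\xi_{j,n})-\theta(\xi_{j,n})|^2)^{1/2}$ by Jensen/Cauchy--Schwarz.

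Finally I would plug in the rate \eqref{eq:est}: the sum has $|J_n|\asymp n/2$ terms, each of order $O(n^{-(1+\delta)/2})$, giving a total of order $O(n^{1-(1+\delta)/2})=O(n^{(1-\delta)/2})$ --- wait, that does not vanish, so the correct reading is that we need $\|f_{\tilde Z_{j,n}\mid X^*}-f_{W,j}\|_1$ bounded by $\mathrm{const.}\cdot\Delta_{0,j,n}^2$ rather than $|\Delta_{0,j,n}|$; this is where one must be careful. The refinement is that $\varphi$ being Lipschitz (not merely bounded) should not by itself give a quadratic bound, so instead I expect the right estimate to use that $\int|f_{\tilde Z_{j,n}\mid X^*}-f_{W,j}|$ is governed by the difference of \emph{normalised} truncations of the \emph{same} function $\varphi$, which to first order in $\Delta_{0,j,n}$ cancels, leaving an $O(\Delta_{0,j,n}^2)$ remainder. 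Granting that, $\sum_{j\in J_n}E_\theta\,\Delta_{0,j,n}^2\le \mathrm{const.}\cdot n\cdot O(n^{-1-\delta})=O(n^{-\delta})\to0$ uniformly in $\theta$, which yields the asymptotic equivalence of ${\mathcal C}_n$ and ${\mathcal D}_n$. The main obstacle is therefore precisely establishing the \emph{quadratic} (rather than linear) dependence of the total variation distance on $\Delta_{0,j,n}$; this is plausible because $f_{\tilde Z_{j,n}\mid X^*}$ and $f_{W,j}$ have matching total mass and matching ``shape'' $\varphi$, so the leading-order translation/renormalisation effects cancel, and it is consistent with the remark after Lemma \ref{L:1} that the singular (jump) parts of the densities are what make the bound sharp here --- a simple shift of the jump location costs order $|\Delta_{0,j,n}|$, but comparing to the \emph{renormalised} unshifted density removes that linear term. \hfill $\square$
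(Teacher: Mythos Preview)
Your proposal contains two genuine gaps.

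First, you have misread Definition \ref{defD}: the density $f_{W,j}$ has indicator $1_{[\Delta_{0,j,n}-1,\Delta_{0,j,n}+1]}$, \emph{the same} support as $f_{\tilde Z_{j,n}\mid X^*}$, not $1_{[-1,1]}$. The whole point of the construction is that the two densities are mutually absolutely continuous; there is no symmetric difference of supports and hence no singular part in the comparison. This is exactly why the experiment ${\mathcal D}_n$ is introduced as an intermediate step.

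Second, and more importantly, even with the correct supports your total-variation route still fails. One has
\[
\|f_{\tilde Z_{j,n}\mid X^*}-f_{W,j}\|_1=\int_{\Delta-1}^{\Delta+1}\bigl|\varphi(x-\Delta)-c^{-1}\varphi(x)\bigr|\,dx,
\qquad c=\int_{\Delta-1}^{\Delta+1}\varphi,
\]
and for a generic Lipschitz $\varphi$ this is of exact order $|\Delta|$, not $\Delta^2$: to first order the integrand is $\Delta\bigl(\varphi(x)(\varphi(1)-\varphi(-1))-\varphi'(x)\bigr)$, whose $L_1$-norm has no reason to vanish. So the ``leading-order cancellation'' you hope for does not occur in $L_1$, and the telescoping bound $\sum_{j\in J_n}\|f_{\tilde Z_{j,n}\mid X^*}-f_{W,j}\|_1$ diverges like $n^{(1-\delta)/2}$, as you yourself noticed.

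The paper resolves this by replacing the $L_1$ telescoping bound with the \emph{squared Hellinger} subadditivity for product measures,
\[
\|P_{\tilde{\bf Z}\mid X^*}-P_{{\bf W}\mid X^*}\|_{TV}\ \le\ 2\sum_{j\in J_n} H^2\bigl(f_{\tilde Z_{j,n}\mid X^*},f_{W,j}\bigr).
\]
Because the two densities share the same support and $\varphi$ is Lipschitz and bounded away from zero, $\sqrt{\varphi}$ is Lipschitz and one gets the pointwise bound $\bigl|\sqrt{\varphi(x-\Delta)}-c^{-1/2}\sqrt{\varphi(x)}\bigr|\le\text{const.}\,|\Delta|$, whence $H^2\le\text{const.}\,\Delta_{0,j,n}^2$ directly. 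Summing and taking expectations gives $O(n\cdot n^{-1-\delta})=O(n^{-\delta})\to 0$. The quadratic dependence you need is thus a structural feature of the Hellinger distance, not a cancellation phenomenon in total variation.
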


\noindent {\it Proof of Lemma \ref{L:2}:} By Le Cam's inequality  and the subadditivity of the squared Hellinger distance $H$ for product measures (cf. Section 2.4 in \citeasnoun{T09} or Appendix 9.1 in \citeasnoun{R08}) we deduce that for any measurable functional $R$ with
$\|R\|_\infty\leq 1$ we have
\begin{align} \nonumber
|E_\theta R({\bf X}^*,\tilde{{\bf Z}}) - E_\theta R({\bf X}^*,{\bf W})| \, &\leq \, E_\theta \idotsint \Big|\prod_{j\in J_n} f_\varepsilon(y_j - \Delta_{0,j,n}) - \prod_{j\in J_n} f_{W,j}(y_j)\Big| d{\bf y} \\
& \leq \,
 2 \sum_{j\in J_n} E_\theta H^2\big(f_{W,j}, f_\varepsilon(\cdot - \Delta_{0,j,n})\big)\,,\label{eq:tool}
\end{align}
\noindent where the expectation is taken over $\Delta_{0,j,n}$. Hence, it remains to be shown that the 
sum converges to zero uniformly with respect to $\theta\in\Theta$. That sum equals
\begin{align*}
\sum_{j\in J_n} & E_\theta\int_{\Delta_{0,j,n}-1}^{\Delta_{0,j,n}+1}
\left(\sqrt{\varphi(x)} \Big(\int_{\Delta_{0,j,n}-1}^{\Delta_{0,j,n}+1} \varphi(t) dt\Big)^{-1/2}
- \sqrt{\varphi(x-\Delta_{0,j,n})}\right)^2\,dx \\
& \, \leq \, 4 C_\eps^2  \, \big(2\, + \, \{\inf_{|x|\leq 1} \varphi(x)\}^{-1}\big) \, \sum_{j\in J_n} E_\theta \Delta_{0,j,n}^2\,,
\end{align*}

\noindent since $\varphi$ is strictly positive, continuous and satisfies the condition (\ref{eq:error0}).
The imposed convergence rate of the estimator $\hat{\theta}$ yields that the supremum taken over
$\theta\in\Theta$ tends to zero at the rate $O(n^{-\delta})$ and the proof is complete. \hfill $\square$ \\

The conditional joint density of the $W_{j,n}$, $j\in J_n$, given ${\bf X}^*$
from the experiment ${{\mathcal D}_n}$ can be represented by
\begin{align}
 & f_{W}({\bf w}) \, = \, \prod_{j\in J_n} f_{W,j}(w_j) \, = \,
 \Big(\prod_{j\in J_n} \varphi(w_j)\Big) \Big(\prod_{j\in J_n}
 \int_{\Delta_{0,j,n}-1}^{\Delta_{0,j,n}+1} \varphi(t) dt\Big)^{-1} \label{eq:A0.4}  \\
 & \cdot \Big(\prod_{k=0}^{m-1} 1(\min\{w_j : x_{j,n}\in I_{k,n}\}\ge \Delta_{0,j(k),n}-1)\cdot
 1(\max\{w_j : x_{j,n}\in I_{k,n}\}\le \Delta_{0,j(k),n}+1)\Big)\,,\nonumber
\end{align}
\noindent where the $I_{k,n}$ are as in Section \ref{3} and $j(k) = \min\{l \in J_n : x_{l,n} \in I_{k,n}\}$, ${\bf w}=(w_j)_{j\in J_n}$. Note that the parameter $\theta$ is included in the term $\Delta_{0,j(k),n}$.

\begin{defn}\label{defE}
In experiment ${\mathcal E}_n$ only the data $({\bf X}^*,s_{k,n},S_{k,n})$,
$k=0,\ldots,m-1$, with
\begin{align*}
s_{k,n} & \, = \, \min\{W_{j,n} : x_{j,n}\in I_{k,n}\}\,, \\
S_{k,n} & \, = \, \max\{W_{j,n} : x_{j,n}\in I_{k,n}\}\,,
\end{align*}
are observed for $\theta\in\Theta$.
\end{defn}

An inspection of (\ref{eq:A0.4}) yields that $({\bf X}^*,s_{k,n},S_{k,n})$, $k=0,\ldots,m-1$,
provides a sufficient statistic for the whole empirical information
contained in $({\bf X}^*,\{W_{j,n}: j\in J_n\})$ by the Fisher-Neyman factorization theorem.

Sufficiency implies equivalence (e.g. Lemma 3.2 in \citeasnoun{BL96}) and we have
\begin{lem}
Experiments ${\mathcal D}_n$ and ${\mathcal E}_n$ are equivalent.
\end{lem}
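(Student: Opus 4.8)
The plan is to invoke the Fisher–Neyman factorization theorem together with the standard fact that a sufficient statistic yields an equivalent experiment, so that the proof is genuinely short. Concretely, I would argue as follows. Fix $\theta\in\Theta$ and condition on ${\bf X}^*$. Formula \eqref{eq:A0.4} exhibits the conditional joint density $f_W({\bf w})$ as a product of two factors: a factor $\prod_{j\in J_n}\varphi(w_j)$ that does \emph{not} depend on $\theta$ (the normalizing integrals $\int_{\Delta_{0,j,n}-1}^{\Delta_{0,j,n}+1}\varphi$ do depend on $\theta$, but they are constants not depending on ${\bf w}$, so they may be absorbed either into the $\theta$-dependent factor or treated as an overall constant), and the indicator factor $\prod_{k=0}^{m-1} 1(s_{k,n}\ge \Delta_{0,j(k),n}-1)\,1(S_{k,n}\le \Delta_{0,j(k),n}+1)$, which depends on the data ${\bf w}$ \emph{only} through the block-wise extremes $s_{k,n}=\min\{w_j:x_{j,n}\in I_{k,n}\}$ and $S_{k,n}=\max\{w_j:x_{j,n}\in I_{k,n}\}$. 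This is exactly the form $g_\theta(T({\bf w}))\,h({\bf w})$ required by Fisher–Neyman, with $T({\bf w})=(s_{k,n},S_{k,n})_{k=0}^{m-1}$ and with ${\bf X}^*$ carried along unchanged; hence $({\bf X}^*,(s_{k,n},S_{k,n})_{k})$ is a sufficient statistic for the full data $({\bf X}^*,{\bf W})$ in experiment ${\mathcal D}_n$, uniformly over $\theta\in\Theta$.

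Next I would record that experiment ${\mathcal E}_n$ is, by Definition \ref{defE}, precisely the experiment obtained from ${\mathcal D}_n$ by observing only this sufficient statistic. By the standard Le Cam fact that reduction to a sufficient statistic does not change the experiment up to equivalence (there is a Markov kernel reconstructing the distribution of the full data from the statistic, namely the conditional distribution of the discarded observations given $T$, which by sufficiency does not depend on $\theta$), one direction of the Le Cam deficiency vanishes; the other direction is trivial since ${\mathcal E}_n$ is a deterministic function (a statistic) of the data in ${\mathcal D}_n$. I would cite Lemma 3.2 in \citeasnoun{BL96} (as already flagged in the text just above the lemma) for this equivalence-from-sufficiency step, and conclude that the Le Cam distance $\Delta({\mathcal D}_n,{\mathcal E}_n)=0$, which is the assertion.

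The one point requiring a word of care --- and the only place where this otherwise routine argument could go wrong --- is the measurability/Polish-space hypothesis needed to apply the Markov-kernel version of the sufficiency–equivalence principle: one needs the conditional distribution of $({\bf X}^*,{\bf W})$ given $({\bf X}^*,(s_{k,n},S_{k,n})_k)$ to exist as a genuine regular conditional probability and to be $\theta$-independent. Here ${\bf X}^*$ is by assumption an observation in a Polish space, $\bf W$ takes values in $\R^{|J_n|}$, and the conditional law of each block $(W_{j,n})_{x_{j,n}\in I_{k,n}}$ given $(s_{k,n},S_{k,n})$ is, by the product form of $f_W$, that of i.i.d.\ draws from $\varphi$ restricted to $[s_{k,n},S_{k,n}]$ with the two extreme order statistics pinned at the observed values --- manifestly not depending on $\theta$. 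So regular conditional distributions exist and the kernel is well defined. I do not expect any real obstacle beyond spelling this out; the substance of the lemma is the factorization \eqref{eq:A0.4}, which was established in the preceding paragraph.
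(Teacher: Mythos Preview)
Your proposal is correct and follows exactly the paper's own argument: apply Fisher--Neyman to the factorization \eqref{eq:A0.4} to obtain sufficiency of $({\bf X}^*,(s_{k,n},S_{k,n})_k)$, then invoke Lemma~3.2 in \citeasnoun{BL96} to conclude equivalence. Your additional remarks on regular conditional distributions are more detailed than the paper (which simply states the result), but they are accurate and harmless.
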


In the following we study the conditional distribution of $(s_{k,n},S_{k,n})$ given ${\bf X}^*$. Note that, conditionally on ${\bf X}^*$, the $(s_{k,n},S_{k,n})$ are independent for $k=0,\ldots,m-1$ as the intervals $I_{k,n}$ are disjoint. We derive that
\begin{align*}
P[s_{k,n} > x,\, S_{k,n} \leq y|{\bf X}^*] & \, = \, P[W_{j,n} \in (x,y], \, \forall j\in J_n \mbox{ with }x_{j,n} \in I_{k,n} | {\bf X}^*] \\
& \, = \, \Big(\int_x^y f_{W,j(k)}(t) dt\Big)^{l_{k,n}}\,,
\end{align*}
\noindent for $y>x$. Thus we obtain the conditional joint density of $(s_{k,n},S_{k,n})$ via
\begin{align*}
f_{(s_{k,n},S_{k,n})}(x,y) & \, = \, - \frac{\partial^2}{\partial x \partial y} P[s_{k,n} > x,\, S_{k,n} \leq y| {\bf X}^*] \\
& \, = \,  A_{k,n}(x,y) \cdot l_{k,n}(l_{k,n}-1) f_{W,j(k)}(x) f_{W,j(k)}(y) 1_{\{y\geq x\}}\,,
\end{align*}
\noindent where
\begin{align*}
A_{k,n}(x,y) & \, = \, \Big(1 - \int_{\Delta_{0,j(k),n}-1}^{x} f_{W,j(k)}(t) dt - \int_{y}^{\Delta_{0,j(k),n}+1} f_{W,j(k)}(t) dt\Big)^{l_{k,n}-2}\,.
\end{align*}

\begin{defn}
Consider for each $k$ two conditionally on ${\bf X}^*$ independent random variables $s'_{k,n}$ and $S'_{k,n}$ with conditional exponential densities
\begin{align*}
f_{s'_{k,n}}(x) & \, = \, (l_{k,n}-2) f_{W,j(k)}(\Delta_{0,j(k),n}-1) \, \exp\big(- (l_{k,n}-2) f_{W,j(k)}(\Delta_{0,j(k),n}-1)\\ & \hspace{5cm} \cdot (x-\Delta_{0,j(k),n}+1)\big)\, {\bf 1}_{[\Delta_{0,j(k),n}-1,\infty)}(x) \,, \\
f_{S'_{k,n}}(x) & \, = \, (l_{k,n}-2) f_{W,j(k)}(\Delta_{0,j(k),n}+1) \, \exp\big(- (l_{k,n}-2) f_{W,j(k)}(\Delta_{0,j(k),n}+1) \\ & \hspace{5cm} \cdot (-x+\Delta_{0,j(k),n}+1)\big) \, {\bf 1}_{(-\infty,\Delta_{0,j(k),n}+1]}(x)\,,
\end{align*}
\noindent and the joint density $f_{(s'_{k,n},S'_{k,n})}$. Then the experiment
${\mathcal F}_n$ is obtained by observing ${\bf X}^*$ as well as conditionally
on ${\bf X}^*$ independent tuples $(s'_{k,n},S'_{k,n})$, $k=0,\ldots, m-1$.
\end{defn}


\begin{lem} \label{L:10}
Assume that $m \leq \mbox{const.}\cdot n^{1-\delta}$ for some $\delta>0$ and that
\begin{equation} \label{eq:bound}
\sup_{k=0,\ldots,m-1} |\Delta_{0,j(k),n}| \leq 2 C_\Theta\,, \qquad \mbox{a.s.}\,, \quad \forall \theta\in\Theta\,.
\end{equation}

Conditionally on the data set ${\bf X}^*$, the squared Hellinger distance between  $f_{(s'_{k,n},S'_{k,n})}$ and $f_{(s_{k,n},S_{k,n})}$ satisfies
$$ H^2(f_{(s'_{k,n},S'_{k,n})},f_{(s_{k,n},S_{k,n})}) \, \leq \, \mbox{const.}\cdot
\{\log(n/m)\}^4 (m/n)^2\,,$$
\noindent where const. is uniform with respect to $n$, ${\bf X}^*$, $\theta$ and $k$.
\end{lem}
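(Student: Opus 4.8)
The plan is to compare the two joint densities $f_{(s_{k,n},S_{k,n})}$ and $f_{(s'_{k,n},S'_{k,n})}$ directly and show their Hellinger distance is governed by two sources of error: (i) replacing the correction factor $A_{k,n}(x,y)$, a $(l_{k,n}-2)$-th power of $1$ minus two small integrals, by the exponential $\exp(-(l_{k,n}-2)[\cdots])$; and (ii) replacing the density values $f_{W,j(k)}(x)$, $f_{W,j(k)}(y)$ near the lower and upper endpoints by their boundary values $f_{W,j(k)}(\Delta_{0,j(k),n}-1)$ and $f_{W,j(k)}(\Delta_{0,j(k),n}+1)$, and the prefactor $l_{k,n}(l_{k,n}-1)$ by $(l_{k,n}-2)^2$. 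Since both densities put essentially all their mass in a window of size of order $\log(l_{k,n})/l_{k,n}$ near the two endpoints (the extreme value scaling), one expects each relative error to be of order $\log(l_{k,n})/l_{k,n}$, hence the squared relative error of order $\{\log(l_{k,n})/l_{k,n}\}^2$; since $l_{k,n}\asymp n/m$ by \eqref{eq:con_X}, this is the claimed $\{\log(n/m)\}^4(m/n)^2$ after being slightly generous with the log powers.

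First I would record the basic facts: by \eqref{eq:con_X} and the sample-splitting, $l_{k,n}:=\#\{j\in J_n: x_{j,n}\in I_{k,n}\}\asymp n/m$, uniformly in $k$ and $\theta$; by hypothesis $m\le \text{const.}\cdot n^{1-\delta}$ so $l_{k,n}\to\infty$; and by \eqref{eq:bound} together with \eqref{eq:error0} and the strict positivity of $\varphi$, the normalising constant $\int_{\Delta_{0,j(k),n}-1}^{\Delta_{0,j(k),n}+1}\varphi$ is bounded above and below, so $f_{W,j(k)}$ is uniformly bounded above and below on its support and is uniformly Lipschitz. Next I would change variables to the natural extreme-value coordinates $u=(l_{k,n}-2)f_{W,j(k)}(\Delta_{0,j(k),n}-1)(x-\Delta_{0,j(k),n}+1)\ge 0$ and $v=(l_{k,n}-2)f_{W,j(k)}(\Delta_{0,j(k),n}+1)(\Delta_{0,j(k),n}+1-y)\ge 0$, so that $f_{(s'_{k,n},S'_{k,n})}$ becomes the product of two standard exponential densities in $(u,v)$, and $f_{(s_{k,n},S_{k,n})}$ becomes this product times a ratio $\rho_{k,n}(u,v)$ that I must show is uniformly close to $1$ on the effective support. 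The Hellinger bound then reduces to $\mathbb E_{(u,v)}[(1-\sqrt{\rho_{k,n}(u,v)})^2]$ against the product-exponential law, and a split into the region $\{u,v\le A\log l_{k,n}\}$ for a suitable constant $A$ and its complement (whose exponential mass is $O(l_{k,n}^{-A})$, negligible) handles the tails.

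On the bulk region I would estimate $\rho_{k,n}$ factor by factor. The prefactor ratio $l_{k,n}(l_{k,n}-1)/(l_{k,n}-2)^2=1+O(l_{k,n}^{-1})$. For the density ratio, $f_{W,j(k)}(x)/f_{W,j(k)}(\Delta_{0,j(k),n}-1)=1+O(|x-\Delta_{0,j(k),n}+1|)=1+O(u/l_{k,n})$ by Lipschitz continuity, and similarly at the upper endpoint; on the bulk $u,v=O(\log l_{k,n})$, so this contributes $1+O(\log(l_{k,n})/l_{k,n})$. The main term is $A_{k,n}$ versus the exponential: writing $p:=\int_{\Delta_{0,j(k),n}-1}^{x}f_{W,j(k)}$ and $q:=\int_{y}^{\Delta_{0,j(k),n}+1}f_{W,j(k)}$, we have $p=u/l_{k,n}+O(u^2/l_{k,n}^2)$ and $q=v/l_{k,n}+O(v^2/l_{k,n}^2)$ by integrating the density expansion, and then $(1-p-q)^{l_{k,n}-2}=\exp((l_{k,n}-2)\log(1-p-q))=\exp(-(u+v))\cdot(1+O((u^2+v^2+1)/l_{k,n}))$ uniformly for $u,v=O(\log l_{k,n})$, using $\log(1-t)=-t-t^2/2-\cdots$ and $l_{k,n}p\approx u$, $l_{k,n}q\approx v$; meanwhile $\exp(-u-v)$ is exactly $f_{s'}f_{S'}$ in these coordinates up to the boundary-density constants already accounted for. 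Collecting, $\rho_{k,n}(u,v)=1+O((\log l_{k,n})^2/l_{k,n})$ on the bulk, hence $(1-\sqrt{\rho_{k,n}})^2=O((\log l_{k,n})^4/l_{k,n}^2)$ there, and integrating against the (finite-mass) exponential law plus adding the negligible tail gives $H^2\le \text{const.}\cdot\{\log l_{k,n}\}^4 l_{k,n}^{-2}\le\text{const.}\cdot\{\log(n/m)\}^4(m/n)^2$.

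The main obstacle is the careful uniform control of the Taylor expansions of $\log(1-p-q)$ and of $A_{k,n}$ over the whole bulk region $u,v\le A\log l_{k,n}$: one must check that the error terms, which a priori grow like a power of $\log l_{k,n}$, are genuinely dominated by $l_{k,n}^{-1}$ (this is exactly where $l_{k,n}\to\infty$, i.e. $m\le\text{const.}\cdot n^{1-\delta}$, is used) and that all the implied constants are uniform in $k$, $\theta$ and ${\bf X}^*$ — which follows from the uniform bounds on $l_{k,n}$, on $f_{W,j(k)}$ and its Lipschitz constant, and from \eqref{eq:bound}. The bookkeeping of which log-power appears is where the somewhat lossy exponent $4$ in $\{\log(n/m)\}^4$ comes from; a sharper analysis would likely give a smaller power, but this is irrelevant for the asymptotic-equivalence conclusion since $m$ is chosen so that $m\{\log(n/m)\}^4(m/n)^2=m^3 n^{-2}\{\log(n/m)\}^4\to 0$.
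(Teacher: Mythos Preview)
Your proposal is correct and follows essentially the same route as the paper's proof: both arguments split the domain into a bulk region of width $\asymp \log(l_{k,n})/l_{k,n}$ near the two endpoints and a negligible tail, then on the bulk Taylor-expand $\log A_{k,n}(x,y)=(l_{k,n}-2)\log(1-p-q)$ and use the Lipschitz continuity of $f_{W,j(k)}$ to control the density ratios, arriving at a relative error of order $l_{k,n}^{-1}\log^2 l_{k,n}$ and hence $H^2=O(l_{k,n}^{-2}\log^4 l_{k,n})$. The only cosmetic difference is that you first pass to the standardized coordinates $(u,v)$ so the reference density becomes a product of unit exponentials, whereas the paper works in the original $(x,y)$ coordinates and introduces the auxiliary densities $f_{1,k,n}$ and $\tilde B_{k,n}$; one minor point to make explicit in your write-up is that the tail bound must cover the $f_{(s_{k,n},S_{k,n})}$-mass as well (your ``exponential mass'' remark literally only covers $f_{(s'_{k,n},S'_{k,n})}$), which follows from the uniform lower bound on $f_{W,j(k)}$ exactly as in the paper's bound on $T_2$.
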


\begin{rem}
This approximation result together with the ensuing corollary tells us that we need to choose the number $m$ of intervals of polynomially smaller order than $n^{2/3}$. To see that we cannot hope for a better approximation order, note that already in the most simple univariate case where $s:=\min(U_i, i=1,\ldots, I)$ with $U_i$ i.i.d. uniform on $[0,1]$ and $s'$ exponentially distributed with intensity $I\in\N$, we have
for $I\to\infty$
\begin{align*}
H^2(f_s,f_{s'})&\ge\int_0^{1/I}\big( \sqrt{I(1-x)^{I-1}}-\sqrt{I\exp(-Ix)}\big)^2dx\\
&\approx \big((1-1/I)^{(I-1)/2}-\exp(-1/2)\big)^2\asymp I^{-2}.
\end{align*}
\end{rem}

\begin{cor}\label{cor1}
We assume that an estimator $\hat{\theta}$ of $\theta$ can be constructed from
the data ${\bf X}^*$ such that (\ref{eq:bound}) holds. For $m=O(n^{2/3-\delta})$ with some $\delta>0$ as $n\to\infty$ the
experiments ${\mathcal E}_n$ and ${\mathcal F}_n$ are asymptotically
equivalent.
\end{cor}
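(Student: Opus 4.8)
The plan is to deduce Corollary \ref{cor1} from Lemma \ref{L:10} by the same mechanism already used twice above, namely Le Cam's inequality together with the subadditivity of the squared Hellinger distance for product measures. Recall that experiment ${\mathcal E}_n$ observes $({\bf X}^*, s_{k,n}, S_{k,n})$, $k=0,\ldots,m-1$, whereas ${\mathcal F}_n$ observes $({\bf X}^*, s'_{k,n}, S'_{k,n})$, $k=0,\ldots,m-1$; conditionally on ${\bf X}^*$ the tuples are independent across $k$ in both experiments. Hence for any measurable functional $R$ with $\|R\|_\infty\le 1$,
\begin{align*}
\big|E_\theta R({\bf X}^*,(s_{k,n},S_{k,n})_k) - E_\theta R({\bf X}^*,(s'_{k,n},S'_{k,n})_k)\big|
&\le E_\theta\Big\| \prod_{k=0}^{m-1} f_{(s_{k,n},S_{k,n})} - \prod_{k=0}^{m-1} f_{(s'_{k,n},S'_{k,n})}\Big\|_1 \\
&\le 2\sum_{k=0}^{m-1} E_\theta\, H^2\big(f_{(s_{k,n},S_{k,n})}, f_{(s'_{k,n},S'_{k,n})}\big),
\end{align*}
where the outer expectation is over ${\bf X}^*$ (equivalently over the $\Delta_{0,j(k),n}$). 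The conditional bound from Lemma \ref{L:10} is uniform in ${\bf X}^*$, $\theta$ and $k$, so it survives taking the expectation, giving
\[
\sup_{\theta\in\Theta}\big|E_\theta R({\bf X}^*,(s_{k,n},S_{k,n})_k) - E_\theta R({\bf X}^*,(s'_{k,n},S'_{k,n})_k)\big| \le \text{const.}\cdot m\cdot\{\log(n/m)\}^4 (m/n)^2.
\]

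It then remains to check that this bound is $o(1)$ under the assumption $m=O(n^{2/3-\delta})$. Writing $m=n^{2/3-\delta}$ (up to constants), we have $m^3/n^2 = n^{2-3\delta}/n^2 = n^{-3\delta}\to 0$, while the logarithmic factor $\{\log(n/m)\}^4 = O((\log n)^4)$ only contributes a polylogarithmic correction, which is absorbed by choosing any $\delta'<\delta$; concretely $m\{\log(n/m)\}^4(m/n)^2 = O(n^{-3\delta}(\log n)^4) = o(1)$. Since $m=O(n^{2/3-\delta})$ in particular satisfies $m\le\text{const.}\cdot n^{1-\delta}$, the hypothesis of Lemma \ref{L:10} is met; and the hypothesis \eqref{eq:bound} on $\hat\theta$ is exactly what we have assumed in the corollary, guaranteeing $|\Delta_{0,j(k),n}|\le 2C_\Theta$ almost surely (using $\|\theta\|_\infty\le C_\Theta$ and the range of $\hat\theta$, which may be truncated to $[-C_\Theta,C_\Theta]$ without loss). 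Taking the supremum over all such $R$ shows the Le Cam distance between ${\mathcal E}_n$ and ${\mathcal F}_n$ tends to zero uniformly in $\theta$, which is the assertion.

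I do not expect a genuine obstacle here: the corollary is a bookkeeping consequence of Lemma \ref{L:10}, and the only point requiring a moment's care is the interplay of the exponents — one must note that the cubic power $m^3/n^2$ is what governs the rate (hence the threshold $n^{2/3}$ flagged in the preceding remark) and that the polylogarithmic factor forces the strict inequality $m=O(n^{2/3-\delta})$ rather than $m=o(n^{2/3})$. One should also recall explicitly that sufficiency-based equivalence (the step ${\mathcal D}_n\cong{\mathcal E}_n$) and the Markov-kernel constructions ensure all experiments live on Polish action spaces, so that Le Cam distance is the right notion throughout and the bound on $|E_\theta R - E_\theta R|$ over bounded measurable $R$ indeed controls it.
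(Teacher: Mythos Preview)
Your proposal is correct and follows essentially the same route as the paper: condition on ${\bf X}^*$, use the product structure across $k$ together with Le Cam's inequality and Hellinger subadditivity (exactly the device \eqref{eq:tool}), plug in the uniform conditional bound from Lemma \ref{L:10}, and verify that $m\cdot\{\log(n/m)\}^4(m/n)^2=o(1)$ under $m=O(n^{2/3-\delta})$. Your rate $n^{-3\delta}(\log n)^4$ is in fact sharper than the $n^{-\delta/2}\log^2 n$ recorded in the paper, but the argument is identical.
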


\noindent {\it Proof of Corollary \ref{cor1}:} Focussing on the total variation
distance between the distributions of the data $\big({\bf
X}^*,\{(s'_{k,n},S'_{k,n}):k=0,\ldots,m-1\}\big)$ and $\big({\bf
X}^*,\{(s_{k,n},S_{k,n}):k=0,\ldots,m-1\}\big)$ we consider for any measurable
functional $R$ on an appropriate domain and $\|R\|_\infty \leq 1$ that
\begin{align*}
& \big| E_\theta R({\bf X}^*,s_{0,n},S_{0,n},\ldots,s_{m-1,n},S_{m-1,n}) - E_\theta R({\bf X}^*,s'_{0,n},S'_{0,n},\ldots,s'_{m-1,n},S'_{m-1,n})\big| \\
& \, \le \, 2 \sum_{k=0}^{m-1}E_\theta H^2(f_{(s_{k,n},S_{k,n})},f_{(s'_{k,n},S'_{k,n})})\\
&  \, \leq \, \mbox{const.}\cdot n^{-\delta/2} \log^2 n\,,
\end{align*}
\noindent using the conditional independence of the $(s_{k,n},S_{k,n})$, $k=0,\ldots,m-1$, on the one hand and the $(s'_{k,n},S'_{k,n})$, $k=0,\ldots,m-1$, on the other hand and arguments as in the proof of Lemma \ref{L:2}; as well as Lemma \ref{L:10} in the last line.  Thus the total variation distance between the distributions of the data $\big({\bf X}^*,\{(s'_{k,n},S'_{k,n}):k=0,\ldots,m-1\}\big)$ and $\big({\bf X}^*,\{(s_{k,n},S_{k,n}):k=0,\ldots,m-1\}\big)$ converges to zero as $n\to\infty$, which proves the claim of the corollary.
\hfill $\square$ \\

\noindent {\it Proof of Lemma \ref{L:10}:} First we mention that, although the arguments of the Hellinger distance are most usually densities, its definition $H^2(f,g) \, = \, \int (\sqrt{f}(x)-\sqrt{g}(x))^2 dx$ may easily be extended to all nonnegative functions $f,g \in L_1(\R)$. This fact will be used in the sequel. Moreover, note that $l_{k,n}\asymp n/m\ge \text{const.}\cdot n^\delta$ holds uniformly over $k$ by our design assumption (\ref{eq:con_X}). We set
$$ f_{1,k,n}(x,y) = \frac{(l_{k,n}-2)^2}{l_{k,n}(l_{k,n}-1)} f_{(s_{k,n},S_{k,n})}(x,y)\,, $$
\noindent so that
\begin{equation} \label{eq:A5.0} H^2(f_{1,k,n},f_{(s_{k,n},S_{k,n})}) \, \leq \, \frac{(4-3l_{k,n})^2}{l_{k,n}(l_{k,n}-1)(l_{k,n}-2)^2} \, \asymp \,
l_{k,n}^{-2}\,, \end{equation}
Note that the support of $f_{(s_{k,n},S_{k,n})}$ and hence of $f_{1,k,n}$ is included in the square $Q_{k,n} = [\Delta_{0,j(k),n}-1,\Delta_{0,j(k),n}+1]^{2}$. A sub-square is defined by
$$ Q_{1,k,n} = [\Delta_{0,j(k),n}-1,\Delta_{0,j(k),n}-1+a_{k,n}] \times [\Delta_{0,j(k),n}+1-a_{k,n},\Delta_{0,j(k),n}+1] \subseteq Q_{k,n}\,,$$
\noindent which will contain most probability masses, and we set $Q_{2,k,n} =
Q_{k,n} \backslash Q_{1,k,n}$ where $a_{k,n} = d_0 l_{k,n}^{-1} \log l_{k,n}$
with a constant $d_0>0$ for $n$ sufficiently large. We split the Hellinger
distance into integrals over disjoint domains so that

\begin{align} \nonumber
H^2(f_{1,k,n},f_{(s'_{k,n},S'_{k,n})}) & \, \leq \, \int_{Q_{1,k,n}} \big(\sqrt{f_{1,k,n}}(x,y) - \sqrt{f_{(s'_{k,n},S'_{k,n})}}(x,y)\big)^2 dx\, dy  \\ \nonumber
& \quad + \, 2 \int_{Q_{2,k,n}} f_{1,k,n}(x,y) dx\, dy \, + \, 2 P[s'_{k,n} > \Delta_{0,j(k),n}-1+a_{k,n} | {\bf X}^*] \\ & \nonumber \qquad + \, 2 P[S'_{k,n} < \Delta_{0,j(k),n}+1-a_{k,n} | {\bf X}^*]\\ \label{eq:A4.0}
&=:T_1+T_2+T_3+T_4\,.
\end{align}
The conditions (\ref{eq:error0}) and (\ref{eq:bound}) combined with the
positivity of $\varphi$ imply that $\|f_{W,j(k)}\|_\infty \leq C_\eps$ and that
\begin{align*}
\int_{\Delta_{0,j(k),n}-1}^{x} f_{W,j(k)}(t) dt &  \geq \mbox{const.}\cdot (x-\Delta_{0,j(k),n}+1)\,, \, \forall x \in [\Delta_{0,j(k),n}-1,\Delta_{0,j(k),n}+1], \\
\int_y^{\Delta_{0,j(k),n}+1} f_{W,j(k)}(t) dt &  \geq \mbox{const.}\cdot (\Delta_{0,j(k),n}+1-y)\,, \, \forall y \in [\Delta_{0,j(k),n}-1,\Delta_{0,j(k),n}+1].
\end{align*}
\noindent As the Lebesgue measure of $Q_{k,n}$ is equal to $4$, thus bounded,
we deduce by the definition of $f_{1,k,n}$ and $f_{(s_{k,n},S_{k,n})}$ that
\[T_2\, \leq \, c_\nu n^{-\nu}\,, \]
for each $\nu>0$ when selecting the constant $d_0$ in the definition of
$a_{k,n}$ sufficiently large where $c_\nu$ denotes a finite constant which
depends on neither the data ${\bf X}^*$, $\theta$ nor $x,y$.

Concerning terms $T_3$ and $T_4$, easy calculations yield that these terms are
equal to $2\exp\big\{-a_{k,n} (l_{k,n}-2)  f_{W,j(k)}(\Delta_{0,j(k),n}\mp
1)\big\}$, respectively. We may use (\ref{eq:error0}), (\ref{eq:bound}) and
$\varphi>0$ to show that $f_{W,j(k)}(\Delta_{0,j(k),n}\mp 1) \geq
\mbox{const}$. Again choosing the constant $d_0$ sufficiently large implies
that $\max\{T_3,T_4\}\, \leq \, c'_\nu n^{-\nu}$, for any $\nu>0$ with a
constant $c'_\nu$ which has the same properties as $c_\nu$.

Let us focus on the main term $T_1$. For $(x,y) \in Q_{1,k,n}$, we have
\begin{align*}
\log A_{k,n}(x,y) & \, = \, (l_{k,n}-2) \, \Big(-\int_{\Delta_{0,j(k),n}-1}^x f_{W,j(k)}(t) dt - \int_y^{\Delta_{0,j(k),n}+1} f_{W,j(k)}(t) dt\Big) \\ & \hspace{8.5cm} + R_{1,k,n}(x,y)\,,
\end{align*}
\noindent where $\sup_{(x,y) \in Q_{1,k,n}} \max_{k=0,\ldots,m-1} |R_{1,k,n}(x,y)| \leq \mbox{const.}\cdot l_{k,n} a_{k,n}^2 \asymp
l_{k,n}^{-1}\log^2 l_{k,n}$ by the Taylor expansion of the logarithm. Furthermore, the functions to be integrated are locally approximated by constant functions,
\begin{align*}
& - \int_{\Delta_{0,j(k),n}-1}^x f_{W,j(k)}(t) dt - \int_y^{\Delta_{0,j(k),n}+1} f_{W,j(k)}(t) dt \\
& \, = \, - f_{W,j(k)}(\Delta_{0,j(k),n} - 1) \cdot (x - \Delta_{0,j(k),n} + 1) \\ & \hspace{4.5cm} - f_{W,j(k)}(\Delta_{0,j(k),n} + 1) \cdot (-y + \Delta_{0,j(k),n} + 1) + R_{2,k,n}(x,y)\,,
\end{align*}
\noindent where $\sup_{(x,y) \in Q_{1,k,n}} \max_{k=0,\ldots,m-1} |R_{2,k,n}(x,y)| \leq \mbox{const.}\cdot
l_{k,n}^{-2}\log^2 l_{k,n}$, using the Lip\-schitz continuity of $\phi$.

We introduce $B_{k,n}(x,y) := A_{k,n}(x,y) f_{W,j(k)}(x) f_{W,j(k)}(y) (l_{k,n}-2)^2$ so that $B_{k,n}(x,y)$ coincides with $f_{1,k,n}(x,y)$ on its restriction to $(x,y) \in Q_{1,k,n}$ for $n$ large enough, as well as
$$ \tilde{B}_{k,n}(x,y)\, := \, f_{(s'_{k,n},S'_{k,n})}(x,y) \frac{f_{W,j(k)}(x) f_{W,j(k)}(y)}{f_{W,j(k)}(\Delta_{0,j(k),n}-1) f_{W,j(k)}(\Delta_{0,j(k),n}+1)}\,. $$
We obtain
\begin{align*}
B_{k,n}^{1/2}(x,y) & \, = \, \tilde{B}_{k,n}^{1/2}(x,y) \, \exp\big(R_{1,k,n}(x,y)/2 + (l_{k,n}-2) R_{2,k,n}(x,y)/2\big) \\
& \, = \, \tilde{B}_{k,n}^{1/2}(x,y) + \tilde{B}_{k,n}^{1/2}(x,y) R_{3,k,n}(x,y)\,,
\end{align*}
\noindent where $\sup_{(x,y) \in Q_{1,k,n}} \max_{k=0,\ldots,m-1} |R_{3,k,n}(x,y)| \leq \mbox{const.}\cdot 
l_{k,n}^{-1}\log^2 l_{k,n}$ so that
$$ \tilde{B}_{k,n}^{1/2}(x,y) \, = \,  f_{(s'_{k,n},S'_{k,n})}^{1/2}(x,y) + f_{(s'_{k,n},S'_{k,n})}^{1/2}(x,y) R_{4,k,n}(x,y)\,, $$
\noindent where
\begin{align*}
|R_{4,k,n}(x,y)| & \, \leq \, \mbox{const.}\cdot \big(|f_{W,j(k)}(x) - f_{W,j(k)}(\Delta_{0,j(k),n}-1)|  \\ & \hspace{4cm}+  |f_{W,j(k)}(y) - f_{W,j(k)}(\Delta_{0,j(k),n}+1)|\big) \\
& \, \leq \,  \mbox{const.}\cdot a_{k,n} \, \asymp \, 
l_{k,n}^{-1}\log l_{k,n}\,,
\end{align*}
\noindent where the conditions (\ref{eq:bound}), (\ref{eq:error0}) and their consequences have been used. We conclude that
\begin{align*}
B_{k,n}^{1/2}(x,y) & \, = \, f_{(s'_{k,n},S'_{k,n})}^{1/2}(x,y) + f_{(s'_{k,n},S'_{k,n})}^{1/2}(x,y) R_{5,k,n}(x,y)\,, \end{align*}
\noindent where $\sup_{(x,y) \in Q_{1,k,n}} \max_{k=0,\ldots,m-1} |R_{5,k,n}(x,y)| \leq \mbox{const.}\cdot 
l_{k,n}^{-1}\log^2 l_{k,n}$. Hence, the term $T_1$ is bounded from above by
\begin{align*}
T_1 & \, \leq \, \int_{Q_{1,k,n}} R_{5,k,n}^2(x,y) f_{(s'_{k,n},S'_{k,n})}(x,y) \, dx \, dy \, \leq \, \mbox{const.}\cdot 
(\log^4 l_{k,n}) l_{k,n}^{-2}\,,
\end{align*}
\noindent as the density $f_{(s'_{k,n},S'_{k,n})}$ integrates to one. By inserting the upper bounds on $T_1,\ldots,T_4$ into (\ref{eq:A4.0}) and combining that result with (\ref{eq:A5.0}), we complete the proof. \hfill $\square$ \\

\begin{defn}
In experiment ${\mathcal G}_n$ we observe the data $({\bf
X}^*,(d_{k,n},D_{k,n})_{k=0,\ldots,m-1})$ for $\theta\in\Theta$ where
$d_{0,n},D_{0,n},\ldots,d_{m-1,n},D_{m-1,n}$ are independent random variables,
also independent of ${\bf X}^*$, with densities
\begin{align*}
f_{d_{k,n}}(x) & \, = \, \rho_{k,n} \varphi(-1) \, \exp\big(- \rho_{k,n} \varphi(-1) \cdot [x-\theta(\xi_{j(k),n})]\big)\, {\bf 1}_{[\theta(\xi_{j(k),n}),\infty)}(x) \,, \\
f_{D_{k,n}}(x) & \, = \, \rho_{k,n} \varphi(1) \, \exp\big( \rho_{k,n} \varphi(1) \cdot [x-\theta(\xi_{j(k),n})]\big) \, {\bf 1}_{(-\infty,\theta(\xi_{j(k),n})]}(x)\,,
\end{align*}
\noindent where $\rho_{k,n} = (n/2) \int_{I_{k,n}} f_D(t) dt$ with $f_D$ as in (\ref{eq:con_X_0}).
\end{defn}

\begin{lem} \label{L:11}
We select $m$ such that $m=o(n^{2/3})$. Also we assume the existence of an estimator $\hat{\theta}$ of $\theta$ based on ${\bf X}^*$ such that (\ref{eq:bound}) and
\begin{equation*}
\sup_{\theta \in \Theta}\,  \sup_{x\in [0,1]} \, E_\theta|\hat{\theta}(x) -
\theta(x)|^2 \, = \, o(m^{-1}) \end{equation*} \noindent are fulfilled. Then
the experiments ${\mathcal F}_n$ and ${\mathcal G}_n$ are asymptotically
equivalent as $n\to\infty$.
\end{lem}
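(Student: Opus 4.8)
The plan is to compare the experiment ${\mathcal F}_n$, in which one observes ${\bf X}^*$ together with the conditionally independent pairs $(s'_{k,n},S'_{k,n})$ built from exponential densities anchored at $\Delta_{0,j(k),n}=\theta(\xi_{j(k),n})-\hat\theta(\xi_{j(k),n})$, with the experiment ${\mathcal G}_n$, in which the analogous exponential variables $(d_{k,n},D_{k,n})$ are anchored directly at $\theta(\xi_{j(k),n})$ and have intensities $\rho_{k,n}\varphi(\mp1)$. Since both families are conditionally independent across $k$ given ${\bf X}^*$ (in ${\mathcal G}_n$ they are even unconditionally independent of ${\bf X}^*$), the first move is to reduce, exactly as in the proofs of Lemma~\ref{L:2} and Corollary~\ref{cor1}, via Le Cam's inequality and the subadditivity of the squared Hellinger distance over products, to bounding $\sum_{k=0}^{m-1} E_\theta H^2\big(f_{(s'_{k,n},S'_{k,n})},f_{(d_{k,n},D_{k,n})}\big)$ and showing it tends to zero uniformly in $\theta\in\Theta$. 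Because the joint densities factor into a product of a one-dimensional density for the lower extreme and one for the upper extreme, it suffices to bound the squared Hellinger distance between the two scalar exponential laws in each coordinate separately and add.

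Next I would carry out the one-dimensional comparison. Fix $k$ and consider, say, the upper-type variables: $S'_{k,n}$ has an exponential density (reflected, supported on $(-\infty,\Delta_{0,j(k),n}+1]$) with intensity $(l_{k,n}-2)f_{W,j(k)}(\Delta_{0,j(k),n}+1)$, while $D_{k,n}$ is exponential, supported on $(-\infty,\theta(\xi_{j(k),n})]$, with intensity $\rho_{k,n}\varphi(1)$. Two discrepancies must be controlled: (i) the shift of the endpoint, $\big(\Delta_{0,j(k),n}+1\big)-\theta(\xi_{j(k),n})=1-\hat\theta(\xi_{j(k),n})$ versus the true endpoint $\theta(\xi_{j(k),n})$ — more precisely, after recentring, the offset between the two supports is controlled by $|\hat\theta(\xi_{j(k),n})-\theta(\xi_{j(k),n})|$ plus the deterministic term coming from replacing $\theta$ on the interval by its value at the centre, which is $O(m^{-2})$ by the smoothness in $\Theta$; and (ii) the intensity mismatch $(l_{k,n}-2)f_{W,j(k)}(\Delta_{0,j(k),n}+1)$ versus $\rho_{k,n}\varphi(1)$. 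For (ii) one uses $l_{k,n}=\#\{j\in J_n:x_{j,n}\in I_{k,n}\}\approx \rho_{k,n}=(n/2)\int_{I_{k,n}}f_D$ with a relative error $O(1/l_{k,n})=O(m/n)$ from the design regularity~\eqref{eq:con_X_0}, together with $f_{W,j(k)}(\Delta_{0,j(k),n}+1)=\varphi(1)/\int_{\Delta_{0,j(k),n}-1}^{\Delta_{0,j(k),n}+1}\varphi$ and the fact that $\int_{-1}^1\varphi=1$, so that $\int_{\Delta_{0,j(k),n}-1}^{\Delta_{0,j(k),n}+1}\varphi=1+O(|\Delta_{0,j(k),n}|)$ by Lipschitz continuity of $\varphi$; hence the relative intensity error is $O(m/n)+O(|\Delta_{0,j(k),n}|)$. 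The elementary fact that for two exponential densities with rates $\lambda,\mu$ and endpoints differing by $t$ one has $H^2\lesssim |\lambda-\mu|^2/(\lambda\wedge\mu)^2 + |\lambda t|\wedge 1 \lesssim (\text{rel.\ rate error})^2 + \lambda|t|$ then gives, with $\lambda\asymp \rho_{k,n}\asymp n/m$,
$$
H^2\big(f_{(s'_{k,n},S'_{k,n})},f_{(d_{k,n},D_{k,n})}\big)\ \le\ \text{const.}\cdot\Big( (m/n)^2 + \Delta_{0,j(k),n}^2 + (n/m)\,m^{-2} + (n/m)|\hat\theta(\xi_{j(k),n})-\theta(\xi_{j(k),n})|\Big).
$$

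Finally I would sum over $k=0,\dots,m-1$ and take $E_\theta$. The term $m\cdot(m/n)^2=m^3/n^2$ vanishes because $m=o(n^{2/3})$; the term $\sum_k E_\theta\Delta_{0,j(k),n}^2\le m\cdot\sup_x E_\theta|\hat\theta(x)-\theta(x)|^2 = m\cdot o(m^{-1})=o(1)$; the deterministic bias term contributes $m\cdot (n/m)m^{-2}=n/m^2\cdot m^{? }$— here I must be slightly careful: it is $\sum_k (n/m)m^{-2}\asymp n/m\cdot m^{-2}\cdot m = n m^{-2}$, which is $o(1)$ since $m\gg n^{1/2}$ is forced by $m=o(n^{2/3})$ together with the other hypotheses… actually only $m=o(n^{2/3})$ is assumed, so one should instead absorb the $O(m^{-2})$ endpoint shift into the estimator-error term by noting $m^{-2}=o(m^{-1})$ and treating it on the same footing, giving a contribution $\sum_k(n/m)\cdot o(m^{-1})= (n/m)\,o(1)$, which need not vanish — so the right route is to keep the endpoint-shift contribution to $H^2$ at order $(\lambda m^{-2})\wedge 1$ and use $\sum_k \lambda m^{-2}\asymp (n/m)\,m\,m^{-2}=n/m^2$; this is where the condition $m=o(n^{2/3})$ is not by itself enough, so in fact the lemma's hypothesis $E_\theta|\hat\theta-\theta|^2=o(m^{-1})$ must be used to dominate both the random and the $m^{-2}$ deterministic part simultaneously, the intended reading being that the recentring error $\Delta_{0,j(k),n}$ already incorporates everything and the genuine requirement is $\sum_k (n/m)E_\theta|\Delta_{0,j(k),n}| = n\,E_\theta|\hat\theta-\theta| = o(1)$, i.e. a first-moment bound which follows from the stated second-moment bound by Cauchy–Schwarz only if $E_\theta|\hat\theta-\theta|=o(1/n)$, and this is where $m=o(n^{2/3})$ enters: $E_\theta|\hat\theta-\theta|\le (o(m^{-1}))^{1/2}=o(m^{-1/2})=o(n^{-1/3})$, not $o(n^{-1})$. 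The main obstacle is therefore the bookkeeping of these competing orders, and the correct argument is to bound $H^2$ in each block by $\text{const.}\cdot\big((m/n)^2+\Delta_{0,j(k),n}^2\big)$ — using that the endpoint offset enters $H^2$ only \emph{quadratically} when measured against the exponential scale $1/\lambda\asymp m/n$, since a shift of size $t$ with $t\ll 1/\lambda$ produces $H^2\asymp (\lambda t)^2$, and here $t=\Delta_{0,j(k),n}+O(m^{-2})$ with $\lambda\asymp n/m$ gives $(\lambda t)^2\asymp (n/m)^2\Delta_{0,j(k),n}^2+(n/m)^2m^{-4}$; summing yields $\sum_k\big((m/n)^2+(n/m)^2\Delta_{0,j(k),n}^2+n^2m^{-5}\big)$, whose expectation is $m^3/n^2 + (n/m)^2\cdot m\cdot o(m^{-1}) + n^2m^{-4}= o(1)$ precisely under $m=o(n^{2/3})$ and $E_\theta|\hat\theta-\theta|^2=o(m^{-1})$ — and I would present exactly this chain, with the scalar exponential Hellinger estimate isolated as a short preliminary computation.
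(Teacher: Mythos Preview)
Your proposal has a genuine gap: you never carry out the invertible transformation that the lemma actually hinges on. In ${\mathcal F}_n$ the variable $S'_{k,n}$ is a reflected exponential anchored at $\Delta_{0,j(k),n}+1$, while $D_{k,n}$ in ${\mathcal G}_n$ is anchored at $\theta(\xi_{j(k),n})$; as you yourself compute, the raw endpoint gap equals $1-\hat\theta(\xi_{j(k),n})$, which is of order one. With rate $\lambda\asymp n/m\to\infty$, two exponential laws whose endpoints differ by a bounded amount are essentially mutually singular, so a direct Hellinger comparison between $f_{(s'_{k,n},S'_{k,n})}$ and $f_{(d_{k,n},D_{k,n})}$ is hopeless. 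Your phrase ``after recentring'' gestures at a fix but never makes one precise, and the subsequent claim that the residual offset is $|\hat\theta-\theta|+O(m^{-2})$ is incorrect on two counts: there is no $O(m^{-2})$ contribution here (both experiments already use the value $\theta(\xi_{j(k),n})$, not an interval average), and even granting your asserted endpoint term $(n/m)^2\Delta_{0,j(k),n}^2$ in the blockwise $H^2$, its expected sum is $(n/m)^2\cdot m\cdot o(m^{-1})=o\big((n/m)^2\big)$, which is \emph{not} $o(1)$ since $n/m\to\infty$. So the final chain you present does not close.

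The missing idea is this. Because $\hat\theta$ is ${\bf X}^*$-measurable, the map
\[
(s'_{k,n},S'_{k,n})\longmapsto (s''_{k,n},S''_{k,n}):=\big(s'_{k,n}+\hat\theta(\xi_{j(k),n})+1,\;S'_{k,n}+\hat\theta(\xi_{j(k),n})-1\big)
\]
is an invertible data transformation and hence leaves the experiment unchanged. After this shift the endpoints of $s''_{k,n}$ and $S''_{k,n}$ coincide \emph{exactly} with those of $d_{k,n}$ and $D_{k,n}$ (all at $\theta(\xi_{j(k),n})$), so one is reduced to comparing exponential laws with the same endpoint but different rates. The explicit formula $H^2=2(\mu_1-\mu_2)^2\big/\big((\mu_1+\mu_2)(\sqrt{\mu_1}+\sqrt{\mu_2})^2\big)$, together with $|l_{k,n}-\rho_{k,n}|\le 2$, $|\varphi(\Delta_{0,j(k),n}\pm1)-\varphi(\pm1)|\le C_\eps|\Delta_{0,j(k),n}|$ and $\big|\int_{\Delta_{0,j(k),n}-1}^{\Delta_{0,j(k),n}+1}\varphi-1\big|\le\text{const.}\,|\Delta_{0,j(k),n}|$, then gives the clean bound
\[
H^2(f_{s''_{k,n}},f_{d_{k,n}})+H^2(f_{S''_{k,n}},f_{D_{k,n}})\le \text{const.}\big(l_{k,n}^{-2}+\Delta_{0,j(k),n}^2\big),
\]
whose expected sum over $k$ is $O(m^3/n^2)+m\cdot o(m^{-1})=o(1)$ under exactly the stated hypotheses $m=o(n^{2/3})$ and $\sup_{\theta,x}E_\theta|\hat\theta(x)-\theta(x)|^2=o(m^{-1})$.
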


\noindent {\it Proof of Lemma \ref{L:11}:} As the estimator $\hat{\theta}$ is
based on the data set ${\bf X}^*$ the transformation ${\mathcal T}$ which maps
the observations $\big({\bf X}^*,\{(s'_{k,n},S'_{k,n}):k=0,\ldots,m-1\}\big)$
to $\big({\bf X}^*,\{(s''_{k,n},S''_{k,n}):k=0,\ldots,m-1\}\big)$ with
$s''_{k,n} = s'_{k,n}+\hat{\theta}(\xi_{j(k),n})+1$ and $S''_{k,n} =
S'_{k,n}+\hat{\theta}(\xi_{j(k),n})-1$ is invertible. Therefore, the experiment
under which the data $\big({\bf
X}^*,\{(s''_{k,n},S''_{k,n}):k=0,\ldots,m-1\}\big)$ are observed is equivalent
to the experiment ${\mathcal F}_n$.

The squared Hellinger distance between the exponential densities with the same endpoint and the scaling parameters $\mu_1$ and $\mu_2$ turns out to be
$2 (\mu_1-\mu_2)^2 (\mu_1+\mu_2)^{-1} (\sqrt{\mu_1} + \sqrt{\mu_2})^{-2}$.

Also, (\ref{eq:con_X_0}) implies that $|l_{k,n} - \rho_{k,n}| \leq 2$ for all
$k=0,\ldots,m-1$. We may set $\mu_{1,\pm} = \rho_{k,n} \varphi(\pm 1)
\int_{\Delta_{0,j(k),n}-1}^{\Delta_{0,j(k),n}+1} \varphi(t) dt$ and
$\mu_{2,\pm} = (l_{k,n}-2) \varphi(\Delta_{0,j(k),n}\pm 1)$. Hence,
\begin{align*}
& H^2(f_{S''_{k,n}},f_{D_{k,n}}) + H^2(f_{s''_{k,n}},f_{d_{k,n}}) \, \leq \, \mbox{const.}\cdot \{l_{k,n}^{-2} + \Delta_{0,j(k),n}^2\}\,,
\end{align*}
\noindent where the constant does not depend on ${\bf X}^*$. Therein we have utilized condition (\ref{eq:bound}) as well as the Lipschitz continuity, positivity and boundedness of $\varphi$. We take the expectation of the sum of these terms over $k=0,\ldots,m-1$ which converges to zero uniformly in $\theta\in\Theta$ by the assumption on $m$ and the imposed convergence rates of the estimator $\hat{\theta}$. Then the asymptotic equivalence is evident by the argument (\ref{eq:tool}) from the proof of Lemma \ref{L:2} when replacing the data sets $\tilde{Z}$ and ${\bf W}$ by the data samples $(d_{k,n},D_{k,n})_{k=0,\ldots,m-1}$ and $(s''_{k,n},S''_{k,n})_{k=0,\ldots,m-1}$, respectively, and inserting the conditional densities of their components given ${\bf X}^*$. The sum is, of course, to be taken over $k=0,\ldots,m-1$ instead of $j\in J_n$.  \hfill $\square$  \\

Now we go over to experiments involving Poisson point processes (PPP).

\begin{defn}
In experiment ${\mathcal H}_n$ we observe ${\bf X}^*$ and independently two
independent Poisson point processes $X_l$ and $X_u$ whose domain is the Borel
$\sigma$-algebra of $\R^2$ and whose intensity functions equal
\begin{align*}
\lambda_l(x,y) & \, = \,  m \varphi(1) \sum_{k=0}^{m-1} \rho_{k,n}  {\bf 1}_{I_{k,n}}(x) {\bf 1}_{[-C_\Theta-1,\theta(\xi_{j(k),n})]}(y)\,, \\
\lambda_u(x,y) & \, = \,  m \varphi(-1) \sum_{k=0}^{m-1} \rho_{k,n}  {\bf 1}_{I_{k,n}}(x) {\bf 1}_{[\theta(\xi_{j(k),n}),C_\Theta+1]}(y)\,,
\end{align*}
and are hence locally constant. We recall that $C_\Theta$ is the uniform upper bound on $|\theta|$ in the parameter set $\Theta$.
\end{defn}
We define the extreme points of $X_l$ and $X_u$ in the strip $I_{k,n} \times \R$ by
\begin{align*}
X_{l,k} & \, = \, \inf \big\{y \in \R \, : \, X_l(I_{k,n}\times [y,\infty)) = 0\big\}\,, \\
X_{u,k} & \, = \, \sup \big\{y \in \R \, : \, X_u(I_{k,n}\times (-\infty,y]) = 0\big\}\,.
\end{align*}

\begin{lem} \label{L:12}
(a)\; The statistic $(X_{l,k},X_{u,k})$, $k=0,\ldots,m-1$, is sufficient for the whole empirical information contained in $X_l$ and $X_u$. \\
(b)\;  The distribution functions of $X_{l,k}$ and $X_{u,k}$ are equal to those
of $\max\{-C_\Theta-1,D_{k,n}\}$ and $\min\{C_\Theta+1,d_{k,n}\}$, respectively
where $d_{k,n}$ and $D_{k,n}$ are as in experiment ${\mathcal G}_n$. Moreover,
all $X_{l,k}$, $k=0,\ldots,m-1$, on the one hand and all $X_{u,k}$,
$k=0,\ldots,m-1$ on the other hand are independent.
\end{lem}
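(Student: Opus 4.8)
The plan is to derive both parts from the restriction (independence) property of Poisson point processes, combined for part (a) with the Fisher--Neyman factorisation criterion already used above. Since the vertical strips $I_{k,n}\times\R$, $k=0,\ldots,m-1$, are pairwise disjoint, the restriction property shows that $X_l$ splits into independent PPPs $X_l^{(k)}:=X_l|_{I_{k,n}\times\R}$ (and $X_u$ into $X_u^{(k)}$), with $X_l\perp X_u$; each $X_{l,k}$ is a measurable function of $X_l^{(k)}$ alone, being a countable infimum of the measurable maps $y\mapsto X_l(I_{k,n}\times[y,\infty))$, and it coincides with the largest $y$-coordinate among the points of $X_l^{(k)}$, with the (null-event) convention $X_{l,k}=-C_\Theta-1$ when $X_l^{(k)}$ carries no point; symmetrically $X_{u,k}$ is the smallest $y$-coordinate of the points of $X_u^{(k)}$, with convention $C_\Theta+1$. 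For part (a) I would take as a $\theta$-free dominating measure the law $Q$ of a pair of independent PPPs with the same horizontal profile but \emph{full} vertical support, i.e. with intensities $m\varphi(1)\sum_k\rho_{k,n}\mathbf{1}_{I_{k,n}}(x)\mathbf{1}_{[-C_\Theta-1,C_\Theta+1]}(y)$ and $m\varphi(-1)\sum_k\rho_{k,n}\mathbf{1}_{I_{k,n}}(x)\mathbf{1}_{[-C_\Theta-1,C_\Theta+1]}(y)$. Because for every $\theta\in\Theta$ the intensities $\lambda_l,\lambda_u$ of experiment ${\mathcal H}_n$ are pointwise dominated by these (using $\|\theta\|_\infty\le C_\Theta$) and agree with them on their supports, one has $P_\theta\ll Q$, and the explicit Radon--Nikodym derivative between two PPP laws (cf. \citeasnoun{K98}), together with $m\,|I_{k,n}|=1$, gives
\begin{align*}
\frac{dP_\theta}{dQ}(X_l,X_u)\,=\,\prod_{k=0}^{m-1}&\Big(e^{\varphi(1)\rho_{k,n}(C_\Theta+1-\theta(\xi_{j(k),n}))}\,\mathbf{1}\{X_{l,k}\le\theta(\xi_{j(k),n})\}\Big)\\
&\times\,\Big(e^{\varphi(-1)\rho_{k,n}(\theta(\xi_{j(k),n})+C_\Theta+1)}\,\mathbf{1}\{X_{u,k}\ge\theta(\xi_{j(k),n})\}\Big),
\end{align*}
since on the support of the reference intensities the pointwise Radon--Nikodym ratio of $\lambda_l$ against the reference intensity equals the indicator that the point lies below $\theta(\xi_{j(k),n})$, and the product of these indicators over all points of $X_l^{(k)}$ collapses to $\mathbf{1}\{X_{l,k}\le\theta(\xi_{j(k),n})\}$ (and symmetrically for $X_u$). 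Thus $dP_\theta/dQ$ is a measurable function of $(X_{l,k},X_{u,k})_{k=0,\ldots,m-1}$ alone, which is the asserted sufficiency by Fisher--Neyman.

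For part (b) I would compute the laws directly. Fix $k$; by the restriction property $X_l^{(k)}$ is a PPP on $I_{k,n}\times[-C_\Theta-1,\theta(\xi_{j(k),n})]$ with constant intensity $m\varphi(1)\rho_{k,n}$, so for $y\in[-C_\Theta-1,\theta(\xi_{j(k),n})]$ the Poisson void probability gives
$$P[X_{l,k}\le y]=P\big[X_l\big(I_{k,n}\times(y,\theta(\xi_{j(k),n})]\big)=0\big]=\exp\big(-m\varphi(1)\rho_{k,n}\,|I_{k,n}|\,(\theta(\xi_{j(k),n})-y)\big)=\exp\big(-\varphi(1)\rho_{k,n}(\theta(\xi_{j(k),n})-y)\big),$$
while $P[X_{l,k}\le y]=1$ for $y\ge\theta(\xi_{j(k),n})$ and $P[X_{l,k}\le y]=0$ for $y<-C_\Theta-1$; this is exactly the distribution function of $\max\{-C_\Theta-1,D_{k,n}\}$, the common atom at $-C_\Theta-1$ carrying mass $\exp(-\varphi(1)\rho_{k,n}(\theta(\xi_{j(k),n})+C_\Theta+1))=P[D_{k,n}\le-C_\Theta-1]$. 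The mirror computation for the bottom point of $X_u^{(k)}$ (intensity $m\varphi(-1)\rho_{k,n}$, support $y\ge\theta(\xi_{j(k),n})$) reproduces the distribution function of $\min\{C_\Theta+1,d_{k,n}\}$. Independence of $X_{l,0},\ldots,X_{l,m-1}$, of $X_{u,0},\ldots,X_{u,m-1}$, and indeed of the two families jointly, is then immediate, since each $X_{l,k}$ is a function of $X_l^{(k)}$, each $X_{u,k}$ a function of $X_u^{(k)}$, and the $X_l^{(k)}$, $X_u^{(k)}$ are mutually independent.

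The only point requiring real care is the bookkeeping in part (a): because $\theta$ enters the \emph{support} of the intensities rather than only their values, the Radon--Nikodym derivative carries the indicator factors above, so the dominating measure $Q$ must be chosen so that $P_\theta\ll Q$ holds uniformly over $\theta\in\Theta$ (which the vertical truncation at $\pm(C_\Theta+1)$ makes possible, again via $\|\theta\|_\infty\le C_\Theta$) while keeping the density a function of the block extremes only. Everything else reduces to a routine application of the restriction property and the Poisson void probabilities.
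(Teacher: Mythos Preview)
Your proof is correct and follows essentially the same approach as the paper: Radon--Nikodym densities for PPP laws combined with the Fisher--Neyman factorisation for part (a), and direct void-probability computations plus the restriction property for part (b). The only noteworthy difference is your choice of dominating measure $Q$, which keeps the horizontal profile $m\varphi(\pm1)\rho_{k,n}$ so that the density ratio is a pure indicator and $dP_\theta/dQ$ depends \emph{only} on the block extremes; the paper instead uses the unit-intensity PPP $\lambda_0=\mathbf{1}_{[0,1]\times[-C_\Theta-1,C_\Theta+1]}$, which produces an extra $\theta$-free factor $\exp\{\sum_k\log[\rho_{k,n}m\varphi(1)]X(I_{k,n}\times[-C_\Theta-1,C_\Theta+1])\}$ depending on the full data---your choice is slightly cleaner, but the factorisation argument is identical.
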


\noindent {\it Proof of Lemma \ref{L:12}:} (a)\; Let $X_0$ denote the PPP with the intensity function $\lambda_0 = {\bf 1}_{[0,1]\times [-C_\Theta-1,C_\Theta+1]}$. The probability measures generated by $X_0,X_l,X_u$ are denoted by ${\bf P}_0,{\bf P}_l,{\bf P}_u$, respectively. As the functions $\lambda_0,\lambda_l,\lambda_u$ are piecewise constant and the support of $\lambda_l$ and $\lambda_u$ is included in that of $\lambda_0$ the measure ${\bf P}_0$ dominates ${\bf P}_l$ and ${\bf P}_u$ and the corresponding Radon-Nikodym derivatives are equal to
\begin{align*}
\frac{d {\bf P}_l}{d {\bf P}_0}(X) & \, = \, \exp\Big\{ \int \log \frac{\lambda_l(x,y)}{\lambda_0(x,y)} dX(x,y) - \int \Big(\frac{\lambda_l(x,y)}{\lambda_0(x,y)} - 1\Big) \lambda_0(x,y) \, dx\, dy\Big\}\,, \\
\frac{d {\bf P}_u}{d {\bf P}_0}(X) & \, = \, \exp\Big\{ \int \log \frac{\lambda_u(x,y)}{\lambda_0(x,y)} dX(x,y) - \int \Big(\frac{\lambda_u(x,y)}{\lambda_0(x,y)} - 1\Big) \lambda_0(x,y) \, dx\, dy\Big\}\,,
\end{align*}
\noindent see e.g. Theorem 1.3 in \citeasnoun{K98} which apparently goes back to \citeasnoun{B71}. Therein $X$ may be viewed as an arbitrary counting process on the Borel $\sigma$-algebra of $[0,1]\times [-C_\Theta-1,C_\Theta+1]$. We write $\Gamma_\theta = \bigcup_{k=0}^{m-1} I_{k,n}\times (\theta(\xi_{j(k),n}),C_\Theta+1]$ and $\Phi = \bigcup_{k=0}^{m-1} I_{k,n}\times [-C_\Theta-1,\tilde{X}_{l,k}]$ where $\tilde{X}_{l,k}$ equals $X_{l,k}$ except that $X_l$ is changed into the general process $X$ in the definition. Then $d {\bf P}_l / d {\bf P}_0$ is equal to
\begin{align*}
\frac{d {\bf P}_l}{d {\bf P}_0}(X) & \, = \, {\bf 1}_{\{\emptyset\}}(\Gamma_\theta \cap \Phi) \cdot \exp\Big\{ \sum_{k=0}^{m-1} \log[\rho_{k,n} m \varphi(1)] X(I_{k,n}\times [-C_\Theta-1,C_\Theta+1])\Big\}\\ & \hspace{2cm}\cdot  \exp\Big\{-\sum_{k=0}^{m-1} (\theta(\xi_{j(k),n}) + C_\Theta + 1) \rho_{k,n} \varphi(1)\Big\} \exp(2C_\Theta+2)\,,
\end{align*}
\noindent where we have used that $X(I_{k,n}\times [-C_\Theta-1,C_\Theta+1]) = X(I_{k,n}\times [-C_\Theta-1,\theta(\xi_{j(k),n})])$ whenever $X(\Gamma_\theta) = 0$; and that $\Gamma_\theta$ and $\Phi$ are disjoint if and only if $X(\Gamma_\theta) = 0$. It follows from the Fisher-Neyman factorization theorem that the $X_{l,k}$, $k=0,\ldots,m-1$ represent a sufficient statistic for $X_l$. The corresponding assertion for the $X_{u,r}$ is proved analogously. \\

(b)\; We consider for $x\in [-C_\Theta-1,\theta(\xi_{j(k),n})]$ that
\begin{align*}
P[X_{l,k} \leq x] & \, = \, P[X_l(I_{k,n}\times (x,\infty)) = 0] \, = \, \exp\big(- (\theta(\xi_{j(k),n})-x)  \rho_{k,n} \varphi(1)\big) \\
& \, = \, P[D_{k,n} \leq x]\,.
\end{align*}
Clearly we have $P[X_{l,k} > \theta(\xi_{j(k),n})] = P[D_{k,n} > \theta(\xi_{j(k),n})] = 0$ and $P[X_{l,k} < -C_\Theta-1] = 0$ so that the distribution functions of $X_{l,k}$ and $\max\{-C_\Theta-1,D_{k,n}\}$ coincide. The claim that $X_{u,k}$ and $\min\{C_\Theta+1,d_{k,n}\}$ are identically distributed follows analogously. Finally the independence of the data $X_{l,k}$, $k=0,\ldots,m-1$ as well as of the data $X_{u,k}$, $k=0,\ldots,m-1$ follows from the fact that $X(A_0),\ldots,X(A_{m-1})$ are independent for all $A_k \subseteq I_{k,n} \times [-C_\Theta-1,C_\Theta+1]$ by the definition of the PPP. \hfill $\square$ \\

\begin{lem} \label{L:13}
For $m=O( n^{1-\delta})$, $\delta>0$, the total variation distance between the
distributions of $\big((\min\{C_\Theta+1,d_{k,n}\},\max\{-C_\Theta-1,D_{k,n}\})
: k=0,\ldots,m-1\big)$ and $\big((d_{k,n},D_{k,n}) : k=0,\ldots,m-1\big)$
converges to zero.
\end{lem}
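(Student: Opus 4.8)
The plan is to use that both data vectors consist of independent coordinates and that the second vector is obtained from the first by a deterministic, coordinatewise clipping which only acts on an exponentially unlikely tail event; the total variation distance then reduces, by subadditivity and a coupling argument, to a sum of tail probabilities that are exponentially small in $n/m$.

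First I would reduce to a single block. In experiment ${\mathcal G}_n$ the random variables $d_{0,n},D_{0,n},\ldots,d_{m-1,n},D_{m-1,n}$ are mutually independent, so both joint laws in the statement are product measures over these $2m$ coordinates, the second being the image of the first under the map that applies $x\mapsto\min\{C_\Theta+1,x\}$ to each $d_{k,n}$-coordinate and $y\mapsto\max\{-C_\Theta-1,y\}$ to each $D_{k,n}$-coordinate. By subadditivity of the total variation distance for product measures it suffices to bound, for each $k$, the total variation distance between the law of $d_{k,n}$ and that of $\min\{C_\Theta+1,d_{k,n}\}$, and likewise the one between the law of $D_{k,n}$ and that of $\max\{-C_\Theta-1,D_{k,n}\}$, and then to sum these $2m$ contributions.

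Next I would bound each such one-dimensional distance. For fixed $k$ the pair $\big(d_{k,n},\min\{C_\Theta+1,d_{k,n}\}\big)$ is a coupling whose two components agree on $\{d_{k,n}\le C_\Theta+1\}$, so by the coupling inequality the corresponding total variation distance is at most $P[d_{k,n}>C_\Theta+1]$, and analogously the $D_{k,n}$-term is at most $P[D_{k,n}<-C_\Theta-1]$. Since $\|\theta\|_\infty\le C_\Theta$, we have $\theta(\xi_{j(k),n})\le C_\Theta$, so that the exponential overshoot in the definition of $d_{k,n}$ must exceed $C_\Theta+1-\theta(\xi_{j(k),n})\ge 1$; combining this with $\rho_{k,n}=(n/2)\int_{I_{k,n}}f_D\asymp n/m$ (because $f_D$ is bounded away from zero and above and the intervals $I_{k,n}$ have length $m^{-1}$) and with $\varphi(\pm1)>0$ gives
\[ P[d_{k,n}>C_\Theta+1]+P[D_{k,n}<-C_\Theta-1]\ \le\ 2\exp(-\text{const.}\cdot n/m) \]
uniformly over $k=0,\ldots,m-1$ and over $\theta\in\Theta$.

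Finally, summing over $k$ yields the bound $\text{const.}\cdot m\exp(-\text{const.}\cdot n/m)$ for the total variation distance in the statement, and with $m=O(n^{1-\delta})$ we have $n/m\ge\text{const.}\cdot n^{\delta}$, so this is $O\big(m\exp(-\text{const.}\cdot n^{\delta})\big)\to 0$. There is no genuine obstacle here; the only mild points of care are the passage from a deterministic coordinatewise map to a total variation estimate (handled by the coupling inequality) and noting that the choice of the vertical bounds $\pm(C_\Theta+1)$, rather than $\pm C_\Theta$, leaves a clearance of at least $1$, which is what makes the tail probabilities genuinely exponentially small in $n/m$ rather than merely small.
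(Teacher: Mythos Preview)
Your proposal is correct and follows essentially the same route as the paper: reduce to one-dimensional contributions by independence, bound each by the tail probability of the exponential variable exceeding the clip level (the paper writes $2P[d_{k,n}\ge C_\Theta+1]$, you use the coupling inequality to get the same bound without the harmless factor $2$), and sum the $m$ terms of order $\exp(-\text{const.}\cdot n/m)$. Your explicit remark that the clearance of at least $1$ between $\theta(\xi_{j(k),n})$ and $C_\Theta+1$ is what drives the exponential smallness is exactly the point the paper uses implicitly.
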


\noindent {\it Proof of Lemma \ref{L:13}:} Due to the independence of the data the desired total variation distance is bounded from above by the sum of the total variation distances between the distributions of $d_{k,n}$ and $\min\{C_\Theta+1,d_{k,n}\}$ plus the corresponding distances between the distributions of $D_{k,n}$ and $\max\{-C_\Theta-1,D_{k,n}\}$ where $k=0,\ldots,m-1$. The total variation distance between $d_{k,n}$ and $\min\{C_\Theta+1,d_{k,n}\}$ is bounded by
\[2 P[d_{k,n} \geq C_\Theta+1] \, \leq \, 2 \exp\big(-\mbox{const.}\cdot n/m\big)\,,\]
\noindent so that because of $m \leq \mbox{const.}\cdot n^{1-\delta}$ the sum of these terms for $k=0,\ldots,m-1$ tends to zero exponentially fast. The distributions of $\max\{-C_\Theta-1,D_{k,n}\}$ and $D_{k,n}$ are treated in the same way. \hfill $\square$. \\

\noindent Combining these two lemmata we obtain directly asymptotic equivalence.

\begin{cor}
Experiments ${\mathcal G}_n$ and ${\mathcal H}_n$ are asymptotically equivalent
for $m$ as in Lemma \ref{L:13}.
\end{cor}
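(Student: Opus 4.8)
The plan is to obtain the equivalence by concatenating Lemma~\ref{L:12} and Lemma~\ref{L:13}, keeping the independent component ${\bf X}^*$ untouched throughout. First I would use part~(a) of Lemma~\ref{L:12}: the strip-wise extreme points $(X_{l,k},X_{u,k})_{k=0,\ldots,m-1}$ form a sufficient statistic for the pair of Poisson point processes $(X_l,X_u)$ of experiment ${\mathcal H}_n$, and since ${\bf X}^*$ is observed independently and is not affected, $\big({\bf X}^*,(X_{l,k},X_{u,k})_{k}\big)$ is sufficient for the full experiment ${\mathcal H}_n$. Because sufficiency implies Le Cam equivalence (as invoked already above via Lemma~3.2 in \citeasnoun{BL96}), ${\mathcal H}_n$ is equivalent to the auxiliary experiment ${\mathcal H}_n'$ in which one observes ${\bf X}^*$ together with $(X_{l,k},X_{u,k})_{k=0,\ldots,m-1}$.

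Next I would pass from ${\mathcal H}_n'$ to ${\mathcal G}_n$. By part~(b) of Lemma~\ref{L:12}, and using that the PPP part is independent of ${\bf X}^*$, the joint law of $(X_{l,k},X_{u,k})_{k=0,\ldots,m-1}$ is exactly the joint law of $\big(\max\{-C_\Theta-1,D_{k,n}\},\min\{C_\Theta+1,d_{k,n}\}\big)_{k=0,\ldots,m-1}$, with the same independence structure across $k$ and between the two families, where $d_{k,n},D_{k,n}$ are the variables of experiment ${\mathcal G}_n$. Hence ${\mathcal H}_n'$ is literally the experiment in which one observes ${\bf X}^*$ and the truncated variables $\big(\min\{C_\Theta+1,d_{k,n}\},\max\{-C_\Theta-1,D_{k,n}\}\big)_k$. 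Since appending the independent observation ${\bf X}^*$ does not change total-variation distances, the Le Cam distance between ${\mathcal H}_n'$ and ${\mathcal G}_n$ is at most the total-variation distance between $\big((d_{k,n},D_{k,n})_k\big)$ and $\big((\min\{C_\Theta+1,d_{k,n}\},\max\{-C_\Theta-1,D_{k,n}\})_k\big)$, which tends to zero by Lemma~\ref{L:13} under $m=O(n^{1-\delta})$. By the triangle inequality for the Le Cam distance, the distance between ${\mathcal G}_n$ and ${\mathcal H}_n$ is then bounded by the sum of the (vanishing) distances between ${\mathcal G}_n$ and ${\mathcal H}_n'$ and between ${\mathcal H}_n'$ and ${\mathcal H}_n$, so it converges to zero, which is the assertion.

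The steps are essentially bookkeeping, and I expect no serious obstacle; the only points meriting a line of care are that the map from a realisation of $(X_l,X_u)$ to the extreme points $(X_{l,k},X_{u,k})$ is measurable --- which is clear, as each $X_{l,k}$, $X_{u,k}$ is a countable infimum or supremum over the a.s.\ finitely many atoms of the point process --- and that the Radon--Nikodym factorisation underlying the sufficiency claim of Lemma~\ref{L:12}(a) is valid on the whole action space of counting measures on $[0,1]\times[-C_\Theta-1,C_\Theta+1]$, so that the Fisher--Neyman criterion genuinely applies. Both have already been dealt with in the proof of Lemma~\ref{L:12}, so here it suffices to assemble the pieces.
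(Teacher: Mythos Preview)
Your proposal is correct and follows exactly the approach the paper intends: the paper's own proof is the single sentence ``Combining these two lemmata we obtain directly asymptotic equivalence,'' and your argument is a careful unpacking of precisely that combination of Lemma~\ref{L:12} (sufficiency of the strip-wise extremes and identification of their law with the truncated exponentials) and Lemma~\ref{L:13} (total-variation closeness of the truncated and untruncated exponentials).
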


\noindent We observe that the choice $m\asymp n^{2/3-\delta}$ for some
$\delta\in(0,1/6)$ meets all requirements imposed on $m$ so far and we
summarize our results.

\begin{prop} \label{P:1}
Select $m\asymp n^{2/3-\delta}$ for some $\delta\in(0,1/6)$ and suppose that there is an estimator $\hat\theta$, based on the data ${\bf X}^*$ alone, which satisfies (\ref{eq:bound}) and
\[
\sup_{\theta \in \Theta}\,  \sup_{x\in [0,1]} \, E_\theta|\hat{\theta}(x) - \theta(x)|^2 \, = \, O(n^{-1-\delta}).
\]
Then we have asymptotic equivalence between experiments ${\mathcal C}_n$ and
${\mathcal H}_n$. Moreover, if we have additionally
\[\sup_{\theta \in \Theta}\,  \sup_{x\in [0,1]} \, E_\theta|\hat{\theta}'(x) - \theta'(x)| \, = \, o(n^{-1/3-\delta}),
\]
then also ${{\mathcal A}_n}^*$ and ${\mathcal H}_n$ are asymptotically
equivalent.
\end{prop}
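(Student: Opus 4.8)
The plan is to concatenate the asymptotic equivalences established in the preceding lemmas and corollaries, using that the Le Cam distance obeys a triangle inequality, so that a fixed finite chain of pairwise asymptotically equivalent experiments has Le Cam distance tending to zero between its endpoints. I would first fix once and for all an estimator $\hat\theta$ built from ${\bf X}^*$ with the postulated rates (and, for the second assertion, additionally an estimator $\hat\theta'$), and use this same $\hat\theta$ to define the auxiliary experiments ${\mathcal D}_n$, ${\mathcal E}_n$, ${\mathcal F}_n$, ${\mathcal G}_n$. The chain to assemble is
\[ {\mathcal C}_n \ \sim\ {\mathcal D}_n \ \sim\ {\mathcal E}_n \ \sim\ {\mathcal F}_n \ \sim\ {\mathcal G}_n \ \sim\ {\mathcal H}_n\,, \]
the links being, in order, Lemma \ref{L:2}, the sufficiency lemma yielding ${\mathcal D}_n\sim{\mathcal E}_n$, Corollary \ref{cor1}, Lemma \ref{L:11}, and the corollary of Lemmas \ref{L:12}--\ref{L:13}; for the second assertion I would in addition prepend the link ${\mathcal A}_n^*\sim{\mathcal C}_n$ from Lemma \ref{L:1}. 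Since the endpoints ${\mathcal C}_n$, ${\mathcal H}_n$ (and ${\mathcal A}_n^*$) do not involve $\hat\theta$, the intermediate experiments serve only as scaffolding and the conclusion is a statement purely about the displayed experiments.

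The substance of the proof is then the bookkeeping check that the single choice $m\asymp n^{2/3-\delta}$ with $\delta\in(0,1/6)$ simultaneously satisfies every requirement on $m$ encountered along the chain, and that the postulated estimator rates feed each lemma's hypotheses. On the side of $m$: Corollary \ref{cor1} needs $m=O(n^{2/3-\delta})$, immediate; Lemma \ref{L:11} needs $m=o(n^{2/3})$, immediate since $\delta>0$; and the corollary of Lemmas \ref{L:12}--\ref{L:13} needs $m=O(n^{1-\delta'})$ for some $\delta'>0$, which holds with $\delta'=1/3$ because $2/3-\delta<2/3$. On the side of the estimator: $\sup_\theta\sup_x E_\theta|\hat\theta(x)-\theta(x)|^2=O(n^{-1-\delta})$ is exactly the hypothesis of Lemma \ref{L:2}; it also gives the weaker bound $o(m^{-1})$ required in Lemma \ref{L:11}, since $m^{-1}\asymp n^{-(2/3-\delta)}$ and $-1-\delta<-(2/3-\delta)$ holds for every $\delta>-1/6$; and (\ref{eq:bound}) is carried along as a standing assumption used in Corollary \ref{cor1} and Lemma \ref{L:11}. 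Concatenating the five links gives the first assertion, ${\mathcal C}_n\sim{\mathcal H}_n$.

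For the second assertion the extra link is Lemma \ref{L:1}, whose two hypotheses are $m^{-1}=o(n^{-1/2})$ --- this is exactly the point at which the upper restriction $\delta<1/6$ is needed, as $m^{-1}\asymp n^{-(2/3-\delta)}$ and $2/3-\delta>1/2\Leftrightarrow\delta<1/6$ --- and $\sup_\theta\sup_x E_\theta|\hat\theta'(x)-\theta'(x)|=o(mn^{-1})$, which since $mn^{-1}\asymp n^{-1/3-\delta}$ is precisely the additionally postulated rate on $\hat\theta'$. I do not expect a genuine obstacle here, as the proposition is a direct corollary of the preceding development; the main thing to be careful about is notational hygiene, since the symbol $\delta$ carries a different meaning inside each cited lemma, so in the write-up I would rename those internal exponents to avoid collision with the fixed $\delta\in(0,1/6)$ of the proposition and would tabulate all the exponent inequalities explicitly before concatenating, to rule out an off-by-a-constant slip.
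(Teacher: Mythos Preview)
Your proposal is correct and matches the paper's treatment exactly: the paper presents Proposition~\ref{P:1} as a summary of the preceding development (``We observe that the choice $m\asymp n^{2/3-\delta}$ for some $\delta\in(0,1/6)$ meets all requirements imposed on $m$ so far and we summarize our results''), without a separate formal proof. Your explicit bookkeeping of the chain ${\mathcal C}_n\sim{\mathcal D}_n\sim{\mathcal E}_n\sim{\mathcal F}_n\sim{\mathcal G}_n\sim{\mathcal H}_n$ and the verification that $m\asymp n^{2/3-\delta}$ and the assumed estimator rates feed each lemma's hypotheses (including the role of $\delta<1/6$ for Lemma~\ref{L:1}) is precisely what the paper leaves implicit.
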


\section{Localization of the PPP model} \label{5}

The processes $X_l$ and $X_u$ in the experiment ${\mathcal H}_n$ have step
functions as their intensity boundaries which approximate continuous functions
as $m$ tends to infinity. Therefore we consider now the experiment where ${\bf
X}^*$ and independently two PPP with boundary function $\theta$ are observed.

\begin{defn}
In experiment ${\mathcal I}_n$ we observe ${\bf X}^*$ and independently two
independent PPP $X_{1,0}$ and $X_{2,0}$ with intensities
\begin{align} \nonumber
\lambda_{1,0}(x,y) & \, = \, (n/2) f_\eps(1) f_D(x) {\bf 1}_{[-C_\Theta-1,\theta(x)]}(y)\,, \\ \label{eq:K}
\lambda_{2,0}(x,y) & \, = \, (n/2) f_\eps(-1) f_D(x) {\bf 1}_{[\theta(x),C_\Theta+1]}(y)\,.
\end{align}
\end{defn}

\begin{prop}\label{PropHI}
We impose the conditions of Lemma \ref{L:1} and, in addition, that for all $\theta\in\Theta$, we have
\begin{equation} \label{eq:bound2} \sup_{x\in [0,1]} |\hat{\theta}'(x)| \, \leq \, 2 \sup_{\theta\in\Theta} \sup_{x\in [0,1]} |\hat{\theta}'(x)|\,, \qquad \mbox{a.s.}
\end{equation}
Then the experiments ${\mathcal H}_n$ and ${\mathcal I}_n$ are asymptotically
equivalent.
\end{prop}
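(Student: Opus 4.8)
The plan is to pass from ${\mathcal H}_n$ to ${\mathcal I}_n$ by a single $\theta$-independent, invertible data transformation that uses only the pilot $\hat\theta'$ extracted from ${\bf X}^*$, and then to bound the remaining observation distance by a Poisson-process Hellinger estimate. Recall that $\xi_{j(k),n}=(k+1/2)/m$ is the centre of $I_{k,n}$ and that \eqref{eq:bound2} gives $\|\hat\theta'\|_\infty\le\text{const.}$ almost surely, uniformly over $\theta$. First I would fix a Lipschitz cut-off $\chi:\R\to[0,1]$ with $\chi\equiv0$ on $(-\infty,-C_\Theta-1]$ and $\chi\equiv1$ on $[-C_\Theta,\infty)$ and, conditionally on ${\bf X}^*$, define on each strip $I_{k,n}\times\R$ the shear $\Psi_k(x,y)=\big(x,\,y+\hat\theta'(\xi_{j(k),n})(x-\xi_{j(k),n})\,\chi(y)\big)$, letting $\Psi=\bigcup_k\Psi_k$ act on $X_l$ and $X_u$. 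Since $|\hat\theta'(\xi_{j(k),n})(x-\xi_{j(k),n})|\le\text{const.}/m$ on $I_{k,n}$, the map $\Psi$ is (given ${\bf X}^*$) bi-Lipschitz and invertible with $\partial_y\Psi_k=1+O(1/m)$; because $\theta(\xi_{j(k),n})\in[-C_\Theta,C_\Theta]$ and $\chi\equiv1$ there, $\Psi$ keeps the floor $y=-C_\Theta-1$ fixed and carries the step boundary $y=\theta(\xi_{j(k),n})$ to the affine curve $b_k(x):=\theta(\xi_{j(k),n})+\hat\theta'(\xi_{j(k),n})(x-\xi_{j(k),n})$ on $I_{k,n}$. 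As $\hat\theta'$ is ${\bf X}^*$-measurable, $({\bf X}^*,X_l,X_u)$ and $({\bf X}^*,\Psi(X_l),\Psi(X_u))$ are equivalent experiments, and by the mapping theorem for Poisson processes the conditional law of $\Psi(X_l)$ given ${\bf X}^*$ is a PPP whose intensity on $\{-C_\Theta-1\le y\le b_k(x)\}\cap(I_{k,n}\times\R)$ equals the piecewise constant proxy $\tfrac n2 f_\eps(1)\,m\!\int_{I_{k,n}}\!f_D(t)\,dt$ of $\tfrac n2 f_\eps(1)f_D(x)$, multiplied by a Jacobian factor $1+O(1/m)$ which differs from $1$ only on the band $\{\,|y+C_\Theta|\le1+O(1/m)\,\}$; analogously for $\Psi(X_u)$ with $f_\eps(-1)$ and $\{b_k(x)\le y\le C_\Theta+1\}$.

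Next I would estimate $\sup_\theta\|{\mathcal L}(\Psi(X_l),\Psi(X_u),{\bf X}^*)-{\mathcal L}(X_{1,0},X_{2,0},{\bf X}^*)\|_{TV}$. Both laws share the same ${\bf X}^*$-marginal and, given ${\bf X}^*$, factor into two independent Poisson processes, so --- arguing as in the proof of Lemma \ref{L:2} --- this distance is at most $\text{const.}\cdot E_\theta\big(\int(\sqrt{\lambda_{\Psi(X_l)}}-\sqrt{\lambda_{1,0}})^2+\int(\sqrt{\lambda_{\Psi(X_u)}}-\sqrt{\lambda_{2,0}})^2\big)$, using $H^2(\mathrm{PPP}(\mu_1),\mathrm{PPP}(\mu_2))\le\int(\sqrt{\mu_1}-\sqrt{\mu_2})^2$. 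Splitting each integrand over (i) the Jacobian band, (ii) the symmetric difference of $\{y\le b_k(x)\}$ and $\{y\le\theta(x)\}$, and (iii) the region where $f_D$ differs from its interval average, and using the Lipschitz property of $f_D$, $\|\theta''\|_\infty\le C_\Theta$, $|x-\xi_{j(k),n}|\le\tfrac1{2m}$, and the boundedness and positivity of $f_D,f_\eps$, one obtains the almost sure bound $\int(\sqrt{\lambda_{\Psi(X_l)}}-\sqrt{\lambda_{1,0}})^2\le\text{const.}\big(n/m^2+(n/m^2)\sum_{k=0}^{m-1}|\hat\theta'(\xi_{j(k),n})-\theta'(\xi_{j(k),n})|\big)$. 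Taking $E_\theta$ and invoking the two hypotheses of Lemma \ref{L:1}, namely $m^{-1}=o(n^{-1/2})$ and $\sup_\theta\sup_x E_\theta|\hat\theta'(x)-\theta'(x)|=o(mn^{-1})$, the right-hand side is $O(n/m^2)+(n/m)\,o(mn^{-1})=o(1)$ uniformly in $\theta$; the $X_u$-term is identical, and hence the Le Cam distance between ${\mathcal H}_n$ and ${\mathcal I}_n$ tends to $0$.

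The hard part is the construction of the shear, not the estimates. A plain vertical translation by $\hat\theta'(\xi_{j(k),n})(x-\xi_{j(k),n})$ would displace the non-informative floor $-C_\Theta-1$ and create a support mismatch of Hellinger cost of order $n/m\to\infty$, so one has to taper $\Psi$ down to the identity near the floor while keeping it bijective with Jacobian $1+O(1/m)$; it is precisely the gap $\theta(\xi_{j(k),n})\ge-C_\Theta$ that leaves room for this taper. Once that is in place, the only substantive verification is that the two residual errors, $n/m^2$ (design and Jacobian approximation) and $(n/m)\sup_xE_\theta|\hat\theta'(x)-\theta'(x)|$ (local tangent estimation), are simultaneously $o(1)$ for the admissible range of $m$, which is exactly what the two conditions of Lemma \ref{L:1} deliver; the remaining steps are of the same routine character as those in the proofs of Lemmas \ref{L:2} and \ref{L:10}.
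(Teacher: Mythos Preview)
Your argument is correct in substance and reaches the same conclusion via a genuinely different device for handling the non-informative boundaries $y=\pm(C_\Theta+1)$. You build a single tapered shear $\Psi$ whose Jacobian differs from $1$ by $O(1/m)$ on a strip of bounded width, and absorb that deviation as an extra $O(n/m^2)$ term in the PPP Hellinger bound. The paper instead applies the \emph{un}tapered shift $(x,y)\mapsto(x,y+\hat\theta'(x)(\xi(x)-x))$, which moves the floor to a random curve, and then observes that the portion of the resulting PPP below the level $-C_\Theta-1/2$ is conditionally ancillary given ${\bf X}^*$; it discards that piece and re-attaches a $\theta$-free PPP on the fixed strip $[-C_\Theta-1,-C_\Theta-1/2)$. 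Your route is more analytic (one diffeomorphism, pay a Jacobian price), the paper's more statistical (plain shift followed by an ancillarity/sufficiency repair of the boundary); both cost exactly $O(n/m^2)+(n/m)\sup_xE_\theta|\hat\theta'(x)-\theta'(x)|=o(1)$ under the hypotheses of Lemma~\ref{L:1}, and both rely on the gap $|\theta|\le C_\Theta<C_\Theta+1$ to separate the informative from the non-informative part of the support.

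One small slip: you let the \emph{same} $\Psi$ act on $X_u$, but your cut-off $\chi$ is tapered only near the floor $-C_\Theta-1$, so $\Psi$ displaces the ceiling $C_\Theta+1$ by $O(1/m)$ and that support mismatch alone has Hellinger cost $\asymp n/m\to\infty$. Your phrase ``analogously for $\Psi(X_u)$'' must be taken literally: use a mirror-image cut-off $\tilde\chi$ with $\tilde\chi\equiv1$ on $(-\infty,C_\Theta]$ and $\tilde\chi\equiv0$ on $[C_\Theta+1,\infty)$, and apply the corresponding shear to $X_u$; once this is done your estimates go through unchanged. The paper's ancillarity argument treats $X_l$ and $X_u$ separately for exactly this reason, and it too needs the boundedness hypothesis \eqref{eq:bound2} to guarantee that the displaced floor and ceiling stay on the correct side of the fixed levels $\mp(C_\Theta+1/2)$.
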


\noindent {\it Proof of Proposition \ref{PropHI}:} First, we show asymptotic
equivalence of the experiment ${\mathcal H}_n$ with the experiment ${{\mathcal
H}_n}'$ in which one observes the data $({\bf X}^*,\tilde{X}_1,\tilde{X}_2)$
where $\tilde{X}_1$ and $\tilde{X}_2$ are PPP with the intensity functions
\begin{align*}
\tilde{\lambda}_1(x,y) & \, = \, (n/2) f_\eps(1) f_D(x) {\bf 1}_{[-C_\Theta-1,\theta(x)-\hat{\theta}'(x) (\xi(x)-x)]}(y)\,, \\
\tilde{\lambda}_2(x,y) & \, = \, (n/2) f_\eps(-1) f_D(x) {\bf 1}_{[\theta(x)-\hat{\theta}'(x) (\xi(x)-x),C_\Theta+1]}(y)\,,
\end{align*}
\noindent conditionally on ${\bf X}^*$, respectively. Here, $\hat{\theta}'$ denotes the pilot estimator from Lemma \ref{L:1} based on the data set ${\bf X}^*$; and we write $\xi(x)$ for the centre of that interval $I_{k,n}$ which contains the element $x$.


By a similar argument as in (\ref{eq:A1.0}), it suffices to show that the
expected Hellinger distance between the distribution of $\tilde{X}_1$ and $X_l$
on the one hand and $\tilde{X}_2$ and $X_u$ on the other hand converges to
zero. We shall now employ a general formula bounding the Hellinger distance
between two PPP laws $P_1,P_2$ with respective intensities
$\lambda_1,\lambda_2$ by the (generalized) Hellinger distance of the
intensities ; when $P$ denotes the law of the PPP with intensity
$\lambda=\lambda_1+\lambda_2$, we derive from the likelihood expression
\begin{align}
H^2& (P_1,P_2) = 2\Big(1-E_\theta \exp\Big(\int \frac12(\log(\lambda_1/\lambda)+\log(\lambda_2/\lambda))dX - \int \Big(\frac{\lambda_1+\lambda_2}{2\lambda}-1\Big)\lambda\Big)\Big) \nonumber\\
&= 2\Big(1-\Big\{E_\theta\exp\Big(\int \log\sqrt{\lambda_1\lambda_2}/\lambda\,dX -\int (\sqrt{\lambda_1\lambda_2}/\lambda-1)\lambda\Big)\Big\} \nonumber\\
& \hspace{9cm} \cdot \exp\Big(-\int (\sqrt{\lambda_1}-\sqrt{\lambda_2})^2/2 \Big)\Big)\nonumber \\
&=2\left(1-\exp\left(-\int (\sqrt{\lambda_1}-\sqrt{\lambda_2})^2/2  \right)\right)\label{eq:HellPPP}\\
&\le \int (\sqrt{\lambda_1}-\sqrt{\lambda_2})^2\,,\nonumber
\end{align}
where we have used the fact that the Radon-Nikodym-derivative of the PPP-law
with intensity $\sqrt{\lambda_1\lambda_2}$ with respect to $P$ integrates to
one under $P$, see also \citeasnoun{LCY00} for a related result. Thus we bound
the Hellinger distance between the intensities of $\tilde X_1$ and $X_l$ by

\begin{align*}
\int (\sqrt{\lambda_l}-\sqrt{\tilde\lambda_1})^2  \, \leq\,  \mbox{const.}\cdot n \,\sum_{k=0}^{m-1} \Big\{\int_{I_{k,n}}& \big|\theta(\xi(x)) - \theta(x) + \hat{\theta}'(x) (\xi(x) - x)\big| dx \\ & \, + \, \int_{I_{k,n}} \Big|f_D(x) - m \int_{I_{k,m}} f_D(y) dy\Big|^2 dx\Big\}\,,
\end{align*}
\noindent where the constant does not depend on ${\bf X}^*$. As $f_D$ is assumed to be Lipschitz on $[0,1]$ the  latter term contributes to the asymptotic order by the deterministic upper bound $O\big(n m^{-2}\big)$ independently of $\theta$. Then we apply the expectation to the above expression and we obtain
$$ O\big(n m^{-2}\big) +  \mbox{const.}\cdot n m^{-1} \sup_{\theta\in\Theta} \sup_{x\in [0,1]} E_\theta \big|\hat{\theta}'(x) - \theta'(x)\big| =  o(1)\,, $$
\noindent as a uniform upper bound. Together with the same bound for the
Hellinger distance, conditionally on ${\bf X}^*$, between the intensities of
$\tilde X_2$ and $X_u$ this implies asymptotic equivalence between ${\mathcal
H}_n$ and ${{\mathcal H}_n}'$ again by arguments as in (\ref{eq:tool}).

For any two-dimensional Borel set $B$ let us define the pointwise shifted version
$$ \hat{B} \, = \, \big\{(x,y) \in \R^2\, : \, \big(x,y+\hat{\theta}'(x) [\xi(x) - x]\big) \in B\big\}\,, $$
\noindent and the processes $\overline{X}_j(B) \, = \, \tilde{X}_j(\hat{B})$, $j=1,2$, conditionally on the data set ${\bf X}^*$. Note that $\hat{B}$ is a Borel set as well whenever the shift function $\hat{\theta}'(\cdot) [\xi(\cdot) - \cdot]$ is piecewise continuous on the intervals $I_{k,n}$. Then $\overline{X}_j$ represents a PPP with the shifted intensity function
\begin{align*}
\overline{\lambda}_1(x,y) & \, = \, (n/2) \varphi(1) f_D(x) {\bf 1}_{[-C_\Theta-1+\hat{\theta}'(x) (\xi(x)-x),\theta(x)]}(y) \\
\overline{\lambda}_2(x,y) & \, = \, (n/2) \varphi(-1) f_D(x) {\bf 1}_{[\theta(x),C_\Theta+1+\hat{\theta}'(x) (\xi(x)-x)]}(y)\,.
\end{align*}
\noindent Note that this transformation is invertible as long as the data set
${\bf X}^*$ is available. Therefore, the experiment ${{\mathcal H}_n}''$ of
observing ${\bf X}^*$ and $\overline{X}_j$, $j=1,2$ independently is equivalent
to the experiment ${{\mathcal H}_n}'$.

By the imposed upper bound on the estimator $\hat{\theta}'$ we may assume that
$$ \sup_{\theta\in\Theta} \sup_{x\in[0,1]} |\hat{\theta}'(x)| |\xi(x)-x| \, \leq \, 1/2\,, $$
\noindent for $m$ sufficiently large. Hence, the observation of $\overline{X}_j$, $j=1,2$, is equivalent with the observation of two conditionally independent Poisson processes $\overline{X}_{j,1}$ and $\overline{X}_{j,2}$ with the intensity functions
\begin{align*}
\overline{\lambda}_{1,1}(x,y) & \, = \, (n/2) \varphi(1) f_D(x) {\bf 1}_{[-C_\Theta-1/2,\theta(x)]}(y)\,, \\
\overline{\lambda}_{1,2}(x,y) & \, = \, (n/2) \varphi(1) f_D(x) {\bf 1}_{[-C_\Theta-1+\hat{\theta}'(x) (\xi(x)-x),-C_\Theta-1/2)}(y)\,, \\
\overline{\lambda}_{2,1}(x,y) & \, = \, (n/2) \varphi(-1) f_D(x) {\bf 1}_{[\theta(x),C_\Theta+1/2]}(y)\,, \\
\overline{\lambda}_{2,2}(x,y) & \, = \, (n/2) \varphi(-1) f_D(x) {\bf 1}_{(C_\Theta+1/2, C_\Theta+1+\hat{\theta}'(x) (\xi(x)-x)]}(y)\,,
\end{align*}
\noindent Thus all processes $\overline{X}_{j,i}$, $i,j=1,2$, are independent.
Also we realize that the processes $\overline{X}_{1,2}$ and
$\overline{X}_{2,2}$ represent conditionally ancillary statistics given the
data set ${\bf X}^*$ as $\overline{\lambda}_{1,2}$ and
$\overline{\lambda}_{2,2}$ do not explicitly depend on $\theta$, but are fixed
by knowledge of ${\bf X}^*$ for $n$ sufficiently large. Therefore, the
observation of ${\bf X}^*$ and $\overline{X}_{j,1}$, $j=1,2$ is sufficient for
complete empirical information contained in experiment ${{\mathcal H}_n}''$. On
the other hand we may also add two independent PPP $\overline{X}_{j,3}$,
$j=1,2$ with the intensity functions
\begin{align*}
\overline{\lambda}_{1,3}(x,y) & \, = \, (n/2) \varphi(1) f_D(x) {\bf 1}_{[-C_\Theta-1,-C_\Theta-1/2)}(y)\,, \\
\overline{\lambda}_{2,3}(x,y) & \, = \, (n/2) \varphi(-1) f_D(x) {\bf 1}_{(C_\Theta+1/2,C_\Theta+1]}(y)\,,
\end{align*}
\noindent which are totally uninformative. Combining the independent
processes $\overline{X}_{j,1}$ and $\overline{X}_{j,3}$ whose intensity
functions are supported on (almost) disjoint domains for both $j=1,2$,
the considered experiment is equivalent to the experiment ${{\mathcal I}_n}$.  \hfill $\square$ \\

\section{Final proof} \label{6}

In this section, we combine all results derived in the previous sections in
order to complete the proof of Theorem \ref{T:1}. For simplicity we suppose
that $n$ is even. By Proposition \ref{Prop:0} with sample size
$n/2$, there exists an estimator $\hat\theta$ based on the data ${\bf X} = {\bf
X}^*$ from experiment ${{\mathcal A}_n}$ which satisfies the conditions of
Proposition \ref{P:1}, e.g. by choosing $\delta=\alpha/2$. Therefore,
experiments ${\mathcal A}_n$ and ${\mathcal I}_n$ are asymptotically equivalent
by Propositions \ref{P:1} and \ref{PropHI}. The conditions (\ref{eq:bound}) and
(\ref{eq:bound2}) are satisfied when truncating the range of $\hat{\theta}$ and
$\hat{\theta}'$ suitably without losing validity of Proposition \ref{Prop:0}.
Therein, note that the uniform upper bounds on $\theta\in \Theta$ as well as on
its derivative are known. Then we set ${{\mathcal  A}_n}^* = {\mathcal I}_n$ by using the
processes $X_{1,0}$ and $X_{2,0}$ as the data set ${\bf X}^*$ and let ${\bf X}$
take the role of the data ${\bf Y}'$ from experiment ${{\mathcal A}_n}$. Note
that all of our arguments from the previous sections remain valid when
transforming the responses with even instead of odd observation number.
Applying Propositions \ref{P:1} and \ref{PropHI} again, we obtain asymptotic
equivalence of the experiments ${{\mathcal  I}_n}$ and ${{\mathcal J}_n}$ where
the latter model just consists of $X_{1,0}$ and $X_{2,0}$ and two independent
copies $X_{1,0}^*$ and $X_{2,0}^*$. The likelihood process of experiment
${\mathcal J}_n$ and experiment ${\mathcal B}_n$ turns out to be the same,
using Theorem 1.3 in \citeasnoun{K98} as in the proof of Lemma \ref{L:12}, such
that ${\mathcal J}_n$ and ${\mathcal B}_n$ are equivalent experiments. The
concrete equivalence mapping is given by looking at the sum of the processes
$X_j=X_{j,0}+X_{j,0}^\ast$, $j=1,2$, in one direction and by splitting the
point masses in $X_j$ randomly and independently with probability one half into
point masses for $X_{j,0}$ and $X_{j,0}^\ast$ ({\it thinning} of a PPP) for the
other equivalence direction.

\section{Discussion} \label{7}

\subsection{General remarks} \label{7.0}

We have shown asymptotic equivalence of nonparametric regression with
non-regular additive errors and the observation of two specific independent
PPP. Our result also yields that those nonparametric regression models are
asymptotically equivalent to each other as long as the corresponding error
densities have the same jump sizes at $-1$ and $+1$ and are Lipschitz
continuous and positive within the interval $(-1,1)$ -- regardless of the
specific shape of the density inside its support. This unifies the asymptotic
theory for these experiments and properties such as asymptotic minimax bounds,
adaptation, superefficiency can be studied simultaneously for those models. At
least after suitable linear correction by a pilot estimator, local minima and
maxima are asymptotically sufficient for inference in these models.

The limiting Poisson point process model ${\mathcal B}_n$ exhibits a
fascinating new geometric structure. According to \eqref{eq:HellPPP}, the
squared Hellinger distances between observations with parameters
$\theta_1,\theta_2\in\Theta$ is given by
\[ H^2(P_{\theta_1},P_{\theta_2})
=2\Big(1-\exp\Big(-\frac n2(f_\eps(-1)+f_\eps(+1))\int\abs{\theta_1(x)-\theta_2(x)}f_D(x)\,dx\Big)\Big).
\]
Setting $\norm{g}_{L^1_X}:=\int\abs{g(x)}f_D(x)\,dx$, the squared Hellinger
distance is thus equivalent to an $L^1$-distance
\begin{equation}\label{eq:H2L1}
H^2(P_{\theta_1},P_{\theta_2}) \asymp
n \{f_\eps(-1)+f_\eps(+1)\}\norm{\theta_1-\theta_2}_{L_X^1}.
\end{equation}
In contrast, for nonparametric regression with regular errors the continuous
limit model is a Gaussian shift where the corresponding squared Hellinger
distance is equivalent to $n\sigma^{-2}\norm{\theta_1-\theta_2}_{L_X^2}^2$ with
$\sigma^2=\text{Var}(\eps_{j,n})$. While it is well known that the standard
parametric rate improves from $n^{-1/2}$ to $n^{-1}$, the nonparametric view
reveals that we face here an $L^1_X$-topology instead of the usual Hilbert
space $L^2_X$-structure. As discussed below, this different Banach space
geometry is even visible at the level of minimax rates, which are in general
worse than for regular nonparametric regression with sample size $n^2$. A
boundary behaviour of the error density $f_\eps$ other than finite jumps will
imply a different Hellinger topology, in particular the whole range of
$L^p_X$-geometries, $p\in (0,\infty)$, might arise, whose statistical
consequences will be far-reaching and remain to be explored in detail.

\subsection{A nonparametric lower bound}\label{SecLB}

Let us apply the asymptotic equivalence result to study nonparametric lower
bounds for all models in ${\mathcal A}_n$ and for ${\mathcal B}_n$,
simultaneously. We content ourselves here with rate results, but we track
explicitly the dependence on the total jump size $J:=f_\eps(-1)+f_\eps(1)$ and
the design density $f_D$.

\begin{prop}\label{PropLB}
In the PPP model ${\mathcal B}_n$, but with $\theta$ from the parameter space
\[ \Theta_{s,L}:=\{\theta\in C^s([0,1])\,|\,
\norm{\theta}_s\le L\},\quad s,L>0
\]
with generalized H\"older norm
\[
\norm{g}_{s}:=\max_{k=0,1,\ldots,\floor{s}}\norm{g^{(k)}}_\infty+
\sup_{x\not=y}\frac{\abs{g(x)-g(y)}}{\abs{x-y}^{s-\floor{s}}}
\]
the following lower bound for the pointwise loss in estimating $\theta$ and its
derivatives at $x_0\in[0,1]$ holds uniformly in $J:=f_\eps(-1)+f_\eps(1)$, $x_0$ and $f_D(x_0)$
\[
\liminf_{n\to\infty}\inf_{\hat\theta_n}\sup_{\theta\in\Theta_{s,L}}P_\theta\Big(\abs{\hat\theta_n^{(k)}(x_0)-\theta^{(k)}(x_0)}
\ge c_0 \frac{L^{(k+1)/(s+1)}}{(nJf_D(x_0))^{(s-k)/(s+1)}}\Big)\ge \frac{2-\sqrt{3}}{4}>0
\]
with $c_0>0$, where the infimum is taken over all estimators in ${\mathcal
B}_n$ and $k=0,1,\ldots,\floor{s}$.
\end{prop}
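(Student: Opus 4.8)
\noindent\emph{Proof proposal for Proposition \ref{PropLB}.} The plan is a two-point (Le Cam) reduction carried out directly inside the PPP experiment ${\mathcal B}_n$, exploiting that by \eqref{eq:HellPPP} the Hellinger distance between two PPP laws is completely explicit; no asymptotic equivalence is used here. The target constant $\tfrac{2-\sqrt{3}}{4}$ already indicates that the two hypotheses should be separated exactly at squared Hellinger distance $H^2\le 1$. Fix $k\in\{0,\dots,\floor{s}\}$ and $x_0\in[0,1]$, take the constant hypothesis $\theta_0\equiv 0$ (admissible since $L>0$) and a bump perturbation
\[ \theta_1(x)\,=\,\delta_n\,\psi\big((x-x_0)/h_n+x^\ast\big), \]
where $\psi\in C_c^\infty(\R)$ is a fixed bump and $x^\ast$ a fixed point with $\psi^{(k)}(x^\ast)\neq 0$ (possible since $\psi^{(k)}\not\equiv 0$), chosen one-sided when $x_0$ lies within $h_n$ of an endpoint of $[0,1]$ so that the support stays inside $[0,1]$. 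With amplitude $\delta_n=c_1Lh_n^s$ and $c_1>0$ depending only on $\psi$ and $s$, one checks $\norm{\theta_1}_s\le L$ for $h_n\le 1$ -- each contribution $\delta_nh_n^{-j}\norm{\psi^{(j)}}_\infty$, $j\le\floor{s}$, and the H\"older seminorm term $\delta_nh_n^{-s}[\psi^{(\floor{s})}]_{s-\floor{s}}$ is $\le\text{const}\cdot c_1L$ -- so that $\theta_0,\theta_1\in\Theta_{s,L}$.

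The bandwidth $h_n$ is fixed by requiring $H^2(P_{\theta_0},P_{\theta_1})\le 1$. By \eqref{eq:HellPPP} this is equivalent to $\tfrac n2 J\int|\theta_1(x)|f_D(x)\,dx\le\log 2$, and since
\[ \int|\theta_1(x)|f_D(x)\,dx\,=\,c_1L\,\norm{\psi}_{L^1}\big(f_D(x_0)+O(h_n)\big)\,h_n^{s+1}, \]
with the $O(h_n)$ controlled by the fixed Lipschitz constant of $f_D$ uniformly in $x_0$, it suffices to take $h_n=c_2\,(nJf_D(x_0)L)^{-1/(s+1)}$ for a sufficiently small constant $c_2>0$ depending only on $\psi$, $s$ and the design-class constants, not on $J$, $x_0$ or $f_D(x_0)$; note that $h_n\to 0$ because $\inf f_D>0$.

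For this choice $H^2(P_{\theta_0},P_{\theta_1})\le 1$, hence the Hellinger affinity $\int\sqrt{dP_{\theta_0}dP_{\theta_1}}=1-H^2/2\ge 1/2$ and $\tfrac12\int|dP_{\theta_0}-dP_{\theta_1}|\le\sqrt{H^2(1-H^2/4)}\le\sqrt{3}/2$ (the map $t\mapsto t(1-t/4)$ being increasing on $[0,1]$). Since $\theta_0,\theta_1\in\Theta_{s,L}$ and the events $\{|\hat\theta_n^{(k)}(x_0)-\theta_i^{(k)}(x_0)|\ge r_n\}$, $i=0,1$, are disjoint whenever $r_n\le\tfrac12|\theta_1^{(k)}(x_0)-\theta_0^{(k)}(x_0)|$, the standard two-point reduction (see e.g. Ch.~2 in \citeasnoun{T09}) gives, for every estimator $\hat\theta_n$,
\[ \inf_{\hat\theta_n}\sup_{\theta\in\Theta_{s,L}}P_\theta\big(|\hat\theta_n^{(k)}(x_0)-\theta^{(k)}(x_0)|\ge r_n\big)\;\ge\;\tfrac12\big(1-\sqrt{3}/2\big)\;=\;\tfrac{2-\sqrt{3}}{4}. \]
It remains to read off the separation: using $\delta_n=c_1Lh_n^s$ and the identity $1-\tfrac{s-k}{s+1}=\tfrac{k+1}{s+1}$ one gets $|\theta_1^{(k)}(x_0)-\theta_0^{(k)}(x_0)|=c_1|\psi^{(k)}(x^\ast)|c_2^{\,s-k}\,L^{(k+1)/(s+1)}(nJf_D(x_0))^{-(s-k)/(s+1)}$, so that the choice $c_0:=\tfrac14c_1|\psi^{(k)}(x^\ast)|c_2^{\,s-k}>0$ (depending on neither $J$, $x_0$ nor $f_D(x_0)$) makes $r_n:=c_0L^{(k+1)/(s+1)}(nJf_D(x_0))^{-(s-k)/(s+1)}\le\tfrac12|\theta_1^{(k)}(x_0)-\theta_0^{(k)}(x_0)|$ for all large $n$; taking $\liminf_{n\to\infty}$ finishes the proof.

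The argument is essentially routine once \eqref{eq:HellPPP} is in hand: because the loss is pointwise and the Hellinger geometry \eqref{eq:H2L1} is the \emph{linear} $L^1_X$-distance, two hypotheses already produce the sharp rate, and no Assouad- or Fano-type multiple-bump construction is needed (which is where the derivation differs markedly from the regular regression case). The only mild obstacles are bookkeeping the uniformity in $(J,x_0,f_D(x_0))$, which works precisely because $h_n\to 0$ and $\inf f_D>0$ absorb the $O(h_n)$ Lipschitz correction of $f_D$ near $x_0$, and the boundary case $x_0\in\{0,1\}$, dealt with by a one-sided choice of $\psi$ and $x^\ast$.
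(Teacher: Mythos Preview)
Your argument is correct and follows essentially the same route as the paper's own proof: a two-hypothesis reduction with $\theta_0\equiv 0$ versus a localised bump $\theta_1$, bandwidth $h_n\asymp(nJLf_D(x_0))^{-1/(s+1)}$ tuned via \eqref{eq:HellPPP} so that $H^2\le 1$, and then the Le Cam/Tsybakov two-point inequality yielding the constant $\tfrac{2-\sqrt{3}}{4}$. One cosmetic slip: it is not the events $\{|\hat\theta_n^{(k)}(x_0)-\theta_i^{(k)}(x_0)|\ge r_n\}$ that are disjoint but their complements $\{|\hat\theta_n^{(k)}(x_0)-\theta_i^{(k)}(x_0)|< r_n\}$, which is precisely what the two-point bound needs.
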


By asymptotic equivalence and the boundedness of the involved loss function $1\{\abs{\hat\theta_n^{(k)}(x_0)-\theta^{(k)}(x_0)}
\ge c L^{(k+1)/(s+1)}(nJf_D(x_0))^{-(s-k)/(s+1)}\}$, this result immediately generalizes to the regression
experiments ${\mathcal A}_n$ provided the regularity $s$ is larger than two.
Moreover, by Markov's inequality it also applies to $p$-th moment risk. We thus
have:

\begin{cor}
For estimators $\hat\theta_n$ in experiment ${\mathcal A}_n$ with $\theta\in
\Theta_{s,L}\subset\Theta$ and $s>2$, $L>0$ we have for all $p>0$,
$k=0,1,\ldots,\floor{s}$ the lower bound
\[
\liminf_{n\to\infty} L^{-(k+1)/(s+1)}(nJf_D(x_0))^{(s-k)/(s+1)} \inf_{\hat\theta_n}\sup_{\theta\in\Theta_{s,L}}\big(E_\theta\abs{\hat\theta_n^{(k)}(x_0)-\theta^{(k)}(x_0)}^p\big)^{1/p}
\ge c_1 
\]
for some constant $c_1>0$. 
\end{cor}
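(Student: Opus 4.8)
The plan is to read the corollary off Proposition \ref{PropLB} by transporting its lower bound through the asymptotic equivalence of Theorem \ref{T:1}, and then to pass from a tail bound to a moment bound by an elementary reverse Markov step. Fix $k\in\{0,1,\dots,\floor{s}\}$ and set $\psi_n:=L^{(k+1)/(s+1)}(nJf_D(x_0))^{-(s-k)/(s+1)}$, so that Proposition \ref{PropLB} reads $\liminf_{n\to\infty}\inf_{\hat\theta_n}\sup_{\theta\in\Theta_{s,L}}P_\theta(\abs{\hat\theta_n^{(k)}(x_0)-\theta^{(k)}(x_0)}\ge c_0\psi_n)\ge (2-\sqrt3)/4=:q_0>0$ in the PPP model ${\mathcal B}_n$. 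Since $s>2$ we have the inclusion $\Theta_{s,L}\subseteq\Theta$ (after, if necessary, adjusting the constants $C_\Theta,\alpha$ fixing $\Theta$), and asymptotic equivalence is stable under restriction of the parameter set, so the sub-experiments of ${\mathcal A}_n$ and ${\mathcal B}_n$ indexed by $\theta\in\Theta_{s,L}$ remain asymptotically equivalent. The loss $\ell_n(\hat a,\theta):=\mathbf{1}\{\abs{\hat a-\theta^{(k)}(x_0)}\ge c_0\psi_n\}$ is bounded by $1$ uniformly in $n$, and both sample spaces are Polish so that randomised decision rules and Markov kernels are available; hence the minimax risks of $\ell_n$ in ${\mathcal A}_n$ and in ${\mathcal B}_n$ over $\Theta_{s,L}$ differ by at most the Le Cam distance, which tends to $0$. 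Combining this with Proposition \ref{PropLB} yields the same tail lower bound $\liminf_{n\to\infty}\inf_{\hat\theta_n}\sup_{\theta\in\Theta_{s,L}}P_\theta(\abs{\hat\theta_n^{(k)}(x_0)-\theta^{(k)}(x_0)}\ge c_0\psi_n)\ge q_0$ in experiment ${\mathcal A}_n$.

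Next I would convert this to $p$-th moment loss. For every estimator $\hat\theta_n$ and every $\theta\in\Theta_{s,L}$, Markov's inequality gives
\[
\big(E_\theta\abs{\hat\theta_n^{(k)}(x_0)-\theta^{(k)}(x_0)}^p\big)^{1/p}\ \ge\ c_0\psi_n\,\big(P_\theta(\abs{\hat\theta_n^{(k)}(x_0)-\theta^{(k)}(x_0)}\ge c_0\psi_n)\big)^{1/p}.
\]
Because $t\mapsto t^{1/p}$ is nondecreasing and continuous on $[0,\infty)$, the $p$-th root commutes with the supremum over $\theta\in\Theta_{s,L}$ and with the infimum over estimators, so taking $\sup_\theta$, then $\inf_{\hat\theta_n}$, dividing by $\psi_n$ and passing to $\liminf_n$ turns the displayed inequality together with the tail bound from the previous paragraph into
\[
\liminf_{n\to\infty}\ \psi_n^{-1}\,\inf_{\hat\theta_n}\sup_{\theta\in\Theta_{s,L}}\big(E_\theta\abs{\hat\theta_n^{(k)}(x_0)-\theta^{(k)}(x_0)}^p\big)^{1/p}\ \ge\ c_0\,q_0^{1/p}\ =:\ c_1\ >\ 0,
\]
which is the assertion (with $c_1=c_1(p)$ allowed to depend on $p$, and on neither $J$, $x_0$ nor $f_D(x_0)$, since $c_0$ and $q_0$ do not).

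I do not expect a genuine obstacle: the entire content of the corollary is already in Proposition \ref{PropLB} and Theorem \ref{T:1}, and what remains is bookkeeping. The two points to state carefully are (i) the legitimacy of transferring the risk of the bounded loss $\ell_n$ across the vanishing Le Cam distance, which is exactly the transfer of risk for bounded loss functions recalled in the introduction and which applies uniformly along the sequence $(\ell_n)$ because $0\le\ell_n\le 1$ for every $n$; and (ii) the inclusion $\Theta_{s,L}\subseteq\Theta$, which is why the hypothesis $s>2$ (so that every $\theta\in\Theta_{s,L}$ is twice continuously differentiable with a H\"older-continuous second derivative) is imposed, and which, together with the stability of asymptotic equivalence under shrinking the parameter class, allows Theorem \ref{T:1} to be invoked for the sub-experiments indexed by $\Theta_{s,L}$.
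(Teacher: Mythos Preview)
Your proof is correct and follows exactly the route the paper indicates: transfer Proposition \ref{PropLB}'s tail bound from ${\mathcal B}_n$ to ${\mathcal A}_n$ via Theorem \ref{T:1} using that the indicator loss is bounded and that asymptotic equivalence is preserved under restriction to $\Theta_{s,L}\subset\Theta$, then upgrade to $p$-th moment risk by Markov's inequality. Your write-up simply makes explicit what the paper states in two sentences (boundedness of the loss, stability under restriction, and the Markov step), so there is nothing to add.
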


\begin{proof}[Proof of the Proposition \ref{PropLB}]
Let us fix $k\in\{0,1,\ldots\floor{s}\}$. By Theorem 2.2(ii) in \citeasnoun{T09} it suffices to find
$\theta_1,\theta_2\in\Theta_{s,L}$ with
\[\abs{\theta_1^{(k)}(x_0)-\theta_2^{(k)}(x_0)}\ge
L^{(k+1)/(s+1)}(nJf_D(x_0))^{-(s-k)/(s+1)}
\]
and Hellinger distance of the
corresponding observation laws satisfying $H(P_{\theta_1},P_{\theta_2})\le 1$.

We choose some kernel function $K\in\Theta_{s,1}$ with $\int_{-1}^1K(x)\,dx=1$,
$K^{(k)}(0)>0$ and support in $[-1/2,1/2]$ and we set $\theta_1(x)=0$,
$\theta_2(x)=Lh^sK((x-x_0)/h)$ with $h=(LnJf_D(x_0))^{-1/(s+1)}$ (using one-sided kernel versions near the boundary). Then for $n$
sufficiently large we have $\theta_1,\theta_2\in \Theta_{s,L}$ and moreover by \eqref{eq:H2L1}
\[ H^2(P_{\theta_1},P_{\theta_2})=(1+o(1))nJ \int_{-1}^1
\abs{\theta_1(x)-\theta_2(x)}f_D(x)\,dx
\]
and the integral satisfies $\int_{-1}^1 \abs{\theta_2(x)}f_D(x)\,dx=(L+o(1))
h^{s+1}f_D(x_0)$ as $h\to 0$. We conclude that $H(P_{\theta_1},P_{\theta_2})$
converges to one for $n\to\infty$. The result therefore follows from
\[ \abs{\theta_2^{(k)}(x_0)-\theta_2^{(k)}(x_0)}=K^{(k)}(0)L^{(k+1)/(s+1)}(nJf_D(x_0))^{-(s-k)/(s+1)}.\]
\end{proof}

The rate $L^{(k+1)/(s+1)}n^{-(s-k)/(s+1)}$ instead of
$L^{(k+1/2)/(s+1/2)}\sqrt{n}^{-(s-k)/(s+1/2)}$ for regular nonparametric
regression is obviously due to the $L^1_X$-bound on $\theta_2$ instead of the
squared $L^2_X$-bound. Let us mention that a careful study of our upper bound proof in Proposition \ref{Prop:0} will also yield the same dependence on $L=C_\Theta$ for regularity $s=2+\alpha$ and $k\in\{0,1\}$.  More geometrically, we can establish a lower bound for
estimating a linear functional $L(\theta)$ by maximising $L(\theta)$ over
$\theta\in\Theta_{s,L}$ with $\norm{\theta}_{L^1_X}\le 1/(nJ)$. In the scale of
Besov spaces $B^\alpha_{p,p}$ with norms $\norm{\cdot}_{\alpha,p}$,
$\alpha\in\R$, $1\le p\le\infty$, we have $\norm{\theta}_{L^1}\ge
\norm{\theta}_{-1,\infty}$ by duality from $\norm{\theta}_{L^\infty}\le
\norm{\theta}_{1,1}$. Here, we can therefore expect to maximise
$L(\theta)=\theta^{(k)}(x_0)$ as far as the interpolation inequality
\[ \norm{\theta}_{k,\infty}\le
\norm{\theta}_{-1,\infty}^{(s-k)/(s+1)}\norm{\theta}_{s,\infty}^{(k+1)/(s+1)}
\le\text{const.}(nJ)^{-(s-k)/(s+1)}L^{(k+1)/(s+1)}
\]
permits. This is in fact achieved by the choice of $\theta_2$ above, involving
also the localized value $f_D(x_0)$. In the corresponding regular nonparametric
regression model the Hellinger constraint is given by
$\norm{\theta}_{L^2_X}^2\le \sigma^2/n$ and we use
$\norm{\theta}_{L^2}\ge\norm{\theta}_{-1/2,\infty}$ by duality from
$\norm{\theta}_{L^2}\le \norm{\theta}_{1/2,1}$ to obtain the interpolation
inequality
\[ \norm{\theta}_{k,\infty}\le
\norm{\theta}_{-1/2,\infty}^{(s-k)/(s+1/2)}\norm{\theta}_{s,\infty}^{(k+1/2)/(s+1/2)}
\le\text{const.}(\sigma^{-2}n)^{-(s-k)/(2s+1)}L^{(k+1/2)/(s+1/2)},
\]
which similarly reveals the minimax rate in the regular case. Very roughly, we might therefore say that the PPP noise induces a regularity $-1$ in the H\"older scale, while the Gaussian white noise leads to the higher regularity $-1/2$.
In analogy with $\sigma/\sqrt{n}$ in the regular case we might call $1/(nJ)$
the {\it noise level} for the regression problem with irregular
noise and $nJf_D(x_0)$ the {\it effective local sample size} at $x_0$.

\subsection{One-sided frontier estimation}

In many of the applications mentioned in the introduction, the noise density $f_\eps$ has just one jump and not two as in our model ${\mathcal A}_n$. We want to stress that our proof of asymptotic equivalence can also cover the one-jump case. To make the analogy clear, let us assume that $f_\eps$ is still a density on $[-1,1]$ with $f_\eps(-1)>0$ and $f_\eps(1)=0$. Instead of positivity and Lipschitz continuity, we now require $f_\eps$ to be Lipschitz continuous and Hellinger differentiable on $[-1,1]$, i.e. $\sqrt{f_\eps}$ is weakly differentiable with derivative in $L^2([-1,1])$. Note that $f_\eps$ can then be extended to a function $\phi$ on the real line with the same local properties. All other properties of the model ${\mathcal A}_n$ are kept the same.

For the pilot estimator in this model we can obtain the same convergence rates when we select that admissible local polynomial which is the smallest at $x_0$. Lemma \ref{L:1} remains the same, while in Definition \ref{defD} of experiment ${\mathcal D}_n$ we adjust only the left boundary of the density and set
\begin{align*}
f_{W,j}(x) & \, = \, \varphi(x) \Big(\int_{\Delta_{0,j,n}-1}^{\Delta_{0,j,n}+1} \varphi(t) dt\Big)^{-1} 1_{[\Delta_{0,j,n}-1,\infty)}(x)\,,\qquad j\in J_n\,.
\end{align*}
Lemma \ref{L:2} then remains true as well, using the Hellinger differentiability in the proof instead of the uniform positivity. From the form of the density of $W$ we conclude this time that the local minima $s_{k,n}=\min\{W_{j,n}:x_{j,n}\in I_{k,n}\}$, $k=0,\ldots,m-1$, are conditionally sufficient. Then the remaining results remain all valid if we just consider $s_{k,n}$ instead of $(s_{k,n},S_{k,n})$ and merely the upper PPP model. Consequently, this establishes asymptotic equivalence with the PPP $X_2$ of experiment ${\mathcal B}_n$. In this PPP model the regression function $\theta$ appears as the lower frontier of a Poisson point process with intensity $f_D(x)nf_\eps(-1)$ on its epigraph. Frontier estimation where the support of $f_\eps$ is on $[-1,\infty)$ or $(-\infty,1]$, respectively, can be treated analogously. In a general model the case of a regular density $f_\eps$ with finitely many jumps at known locations might be treated, which should also be asymptotically equivalent to suitable PPP models.

\subsection{Counterexample for regularity one}

We give a short argument that for equidistant design $x_{j,n}=\frac{j-1}{n-1}$
and parameter classes $\Theta$ where the target function $\theta\in\Theta$ is
required to satisfy $\norm{\theta'}\le C$ for some $C>0$ the experiments
${\mathcal  A}_n$ and ${\mathcal B}_n$ are not asymptotically equivalent.
Whether H\"older classes of order $1+\alpha$ instead of $2+\alpha$ suffice as
parameter sets for establishing asymptotic equivalence remains a challenging
open question.

Let us consider the function $f_n(x)=C(\pi(n-1))^{-1}\sin(\pi (n-1) x)$ so that
$\norm{f_n'}_\infty=C$ holds for all $n\ge 1$. Now observe that $f_n$ satisfies
$f_n(x_{j,n})=0$ for all $j=1,\ldots,n$. This means in particular that in the
regression experiment ${\mathcal A}_n$ the observations with regression
function $f_n$ cannot be distinguished from those with zero regression
function. In experiment ${\mathcal B}_n$, however, a test between $H_0:
\theta=0$ and $H_1:\theta=f_n$ of the form $T_n=1\{X_1([0,1]\times\R^+)>0\text{
or } X_2([0,1]\times\R^-)>0\}$ satisfies $P_0(T_n=0)=1$ and
\begin{align*}
P_{f_n}(T_n=1)=1-\exp\Big(-n\int_0^1\abs{f_n(x)}dx\Big) & =1-\exp(-2C\pi^{-2}n(n-1)^{-1})\\ & \to 1-\exp(-2C/\pi^2)>0\,,
\end{align*}
for $n\to\infty$. Consequently, testing between $H_0$ and $H_1$ in experiment
${\mathcal B}_n$ is possible with non-trivial power uniformly over $n$. This
implies that experiments ${\mathcal A}_n$ and ${\mathcal  B}_n$ are
asymptotically non-equivalent.

\thebibliography{99}

\harvarditem{Brown and Low}{1996}{BL96}
Brown, L.D. and Low, M. (1996). Asymptotic equivalence of nonparametric regression and white noise. {\it Ann. Statist.} {\bf 24}, 2384--2398.

\harvarditem{Brown et al.}{2002}{BCLZ02}
Brown, L., Cai, T., Low, M. and Zhang, C.-H. (2002). Asymptotic equivalence theory for nonparametric regression with random design. {\it Ann. Statist.} {\bf 30}, 688--707.

\harvarditem{Brown et al.}{2010}{BCZ10}
Brown, L., Cai, T., Zhou, H.H. (2010). Nonparametric regression in exponential families. {\it Ann. Statist.} {\bf 38}, 2005--2046.

\harvarditem{Brown}{1971}{B71}
Brown, M. (1971). Discrimination of Poisson processes. {\it Ann. Math. Statist.} {\bf 42}, 773--776.

\harvarditem{Carter}{2007}{C07}
Carter, A. (2007). Asymptotic approximation of nonparametric regression experiments with unknown variances. {\it Ann. Statist.} {\bf 35}, 1644--1673.

\harvarditem{Carter}{2009}{C10}
Carter, A. (2009). Asymptotically sufficient statistics in nonparametric regression experiments with correlated noise. {\it J .Prob. Statist.} {\bf 2009}, ID 275308 (19 pages).

\harvarditem{Chernozhukov and Hong}{2004}{CH04}
Chernozhukov, V. and Hong, H. (2004). Likelihood estimation and inference in a class of nonregular econometric models. {\it Econometrica} {\bf 72}, 1445--1480.

\harvarditem{DeVore and Lorentz}{1993}{DL93}
DeVore, R.A. and Lorentz, G.G. (1993). {\it Constructive Approximation}, Grundlehren Series {\bf 303}, Springer, Berlin.

\harvarditem{Gijbels et al.}{1999}{GMPS99} Gijbels, I., Mammen, E., Park, B. and Simar, L. (1999). On estimation of monotone and concave frontier functions, {\it J. Amer. Statist. Assoc.} {\bf 94}, 220-228.

\harvarditem{Grama and Nussbaum}{1998}{GN98}
Grama, I. and Nussbaum, M. (1998). Asymptotic equivalence for nonparametric generalized linear models. {\it Prob. Th. Rel. Fields} {\bf  111}, 167--214.

\harvarditem{Grama and Nussbaum}{2002}{GN02}
Grama, I. and Nussbaum, M. (2002). Asymptotic equivalence for nonparametric regression. {\it Math. Meth. Stat.} {\bf  11}(1), 1--36.

\harvarditem{Hall and van Keilegom}{2009}{HK09}
Hall, P. and van Keilegom, I. (2009). Nonparametric ``regression'' when errors are positioned at end-points. {\it Bernoulli} {\bf 15}, 614--633.

\harvarditem{Janssen and Marohn}{1994}{JM94}
Janssen, A. and Marohn, D.M. (1994). On statistical information of extreme order statistics, local extreme value alternatives and Poisson point processes. {\it J. Multivar. Anal.} {\bf 48}, 1--30.

\harvarditem{Karr}{1991}{K91}
Karr, A.F. (1991). {\it Point Processes and Their Statistical Inference}, 2nd ed., Marcel Dekker, New York.

\harvarditem{Knight}{2001}{K01}
Knight, K. (2001). Limiting Distributions of Linear Programming Estimators. {\it Extremes} {\bf 4}, 87--103.

\harvarditem{Korostelev and Tsybakov}{1993}{KT93}
Korostelev, A.P. and Tsybakov, A.B. (1993). {\it Minimax Theory of Image Reconstruction}, Lecture Notes in Statistics {\bf 82}, Springer, New York.

\harvarditem{Kutoyants}{1998}{K98}
Kutoyants, Y.A. (1998). {\it Statistical Inference for Spatial Poisson Processes}, Lecture Notes in Statistics {\bf 134}, Springer, New York.

\harvarditem{Le Cam}{1964}{LC64}
Le Cam, L.M. (1964). Sufficiency and approximate sufficiency. {\it Ann. Math. Statist.} {\bf 35}, 1419--1455.

\harvarditem{Le Cam and Yang}{2000}{LCY00}
Le Cam, L.M. and Yang, G.L. (2000), {\it Asymptotics in Statistics, Some Basic Concepts}, 2nd ed., Springer.

\harvarditem{M\"uller and Wefelmeyer}{2010}{MW10}
M\"uller, U.U. and Wefelmeyer, W. (2010). Estimation in nonparametric regression with nonregular errors. {\it Comm. Statist. Theo. Meth.} {\bf 39}, 1619--1629.

\harvarditem{Nussbaum}{1996}{N96}
Nussbaum, M. (1996). Asymptotic equivalence of density estimation and Gaussian white noise. {\it Ann. Statist.} {\bf 24}, 2399--2430.

\harvarditem{Rei{\ss}}{2008}{R08}
Rei{\ss}, M. (2008). Asymptotic equivalence for nonparametric regression with multivariate and random design. {\it Ann. Statist.} {\bf 36}, 1957--1982.

\harvarditem{Tsybakov}{2009}{T09} Tsybakov, A. B. (2009). {\it Introduction to Nonparametric Estimation}, Springer Series in Statistics.

\harvarditem{van de Geer}{2006}{vdG06} van de Geer, S.A. (2006). {\it Empirical Processes in M-Estimation}, Reprint, Cambridge University Press, New York.

\end{document}